\documentclass[11pt,a4paper]{article}

\usepackage{amssymb} 
\usepackage{amsthm} 
\usepackage{booktabs} 
\usepackage[a4paper, total={6.3in, 8in}]{geometry}
\usepackage{enumerate} 
\usepackage[acronym,nomain]{glossaries} 
\usepackage{hyperref}
\usepackage{mathrsfs}  
\usepackage{mathtools} 
\usepackage{upgreek} 

\usepackage{cleveref} 

\theoremstyle{plain}
\newtheorem{theorem}{Theorem}[section]
\crefname{theorem}{theorem}{theorems}
\Crefname{theorem}{Theorem}{Theorems}

\newtheorem{corollary}[theorem]{Corollary}
\crefname{corollary}{corollary}{corollaries}
\Crefname{corollary}{Corollary}{Corollaries}

\theoremstyle{definition}
\newtheorem{definition}[theorem]{Definition}
\crefname{definition}{definition}{definitions}
\Crefname{definition}{Definition}{Definitions}

\theoremstyle{remark}
\newtheorem{remark}[theorem]{Remark}
\crefname{remark}{remark}{remarks}
\Crefname{remark}{Remark}{Remarks}

\newcommand{\amplitude}{\alpha}
\newcommand{\arrhenius}{\beta}
\newcommand{\Bh}{B_h}
\newcommand{\calV}{\mathcal{V}}
\newcommand{\calW}{\mathcal{W}}
\newcommand{\colsp}{\mathrm{colsp}}
\newcommand{\deltaMax}{\delta_{\mathrm{max}}}
\newcommand{\diag}{\mathrm{diag}}
\newcommand{\diff}{\mathrm{d}}
\newcommand{\diffusion}{d}
\newcommand{\diss}{\mathcal{D}}
\newcommand{\Eh}{E_h}
\newcommand{\error}{\varepsilon}
\newcommand{\F}{F}
\newcommand{\FRed}{\tilde{\F}}
\newcommand{\fulldim}{n}
\newcommand{\gr}{g_\mathrm{r}}
\newcommand{\ham}{\mathcal{H}}
\newcommand{\hamAux}{\ham_{\mathrm{s}}}
\newcommand{\hamh}{\ham_h}
\newcommand{\Hom}{\mathcal{L}}
\newcommand{\Jh}{J_h}
\newcommand{\maxSingularVal}{\sigma_{\mathrm{max}}}
\newcommand{\minSingularVal}{\sigma_{\mathrm{min}}}
\newcommand{\modeDim}{d_{\upphi}}
\newcommand{\modes}{\phi}
\newcommand{\MPOD}{M}
\newcommand{\nMesh}{N}
\newcommand{\NN}{\mathbb{N}}
\newcommand{\preExpFac}{\zeta}
\newcommand{\rDim}{r}
\newcommand{\reactionRate}{v}
\newcommand{\reactionRateMat}{V}
\newcommand{\redB}{\tilde{B}}
\newcommand{\redDiss}{\tilde{\diss}}
\newcommand{\redE}{\tilde{E}}
\newcommand{\redHam}{\tilde{\ham}}
\newcommand{\redHamAux}{\redHam_{\mathrm{s}}}
\newcommand{\redHamAuxDot}{\dot{\redHam}_{\mathrm{s}}}
\newcommand{\redJ}{\tilde{J}}
\newcommand{\redK}{\tilde{K}}
\newcommand{\redQ}{\tilde{Q}}
\newcommand{\redR}{\tilde{R}}
\newcommand{\redy}{\tilde{y}}
\newcommand{\res}{\mathscr{R}}
\newcommand{\Rh}{R_h}
\newcommand{\rph}{\mathfrak{r}}
\newcommand{\RR}{\mathbb{R}}
\newcommand{\shift}{p}
\newcommand{\sol}{x}
\newcommand{\solh}{{\sol}_h}

\newcommand{\spaceVar}{\xi}

\newcommand{\supplyRate}{\mathcal{S}}
\newcommand{\tend}{t_{\mathrm{end}}}
\newcommand{\timeInterval}{\mathbb{I}}
\newcommand{\Text}{\mathcal{T}_{\mathrm{ext}}}
\newcommand{\Tper}{\mathcal{T}_{\mathrm{per}}}
\newcommand{\tridiag}{\mathrm{tridiag}}
\newcommand{\Vr}{V_\mathrm{r}}
\newcommand{\Vs}{V_\mathrm{s}}
\newcommand{\Wr}{W_\mathrm{r}}
\newcommand{\zph}{z}
\newcommand{\zset}{\lbrace 0\rbrace} 

\newcommand{\redrph}{\tilde{\rph}}
\newcommand{\redState}{\tilde{\sol}}
\newcommand{\redSupplyRate}{\tilde{\supplyRate}}

\newcommand{\integral}[4]{\int\limits_{#1}^{#2} #3\,\diff #4}
\newcommand{\ip}[1]{\left\langle #1\right\rangle}
\newcommand{\LTwo}[1]{L^{2}\left(#1\right)}
\newcommand{\norm}[1]{\left\lVert #1\right\rVert}


\DeclareMathOperator*{\argmin}{arg\,min}

\newacronym{dae}{DAE}{differential--algebraic equation}
\newacronym{fem}{FEM}{finite element method}
\newacronym{fom}{FOM}{full-order model}
\newacronym{mor}{MOR}{model order reduction}
\newacronym{ode}{ODE}{ordinary differential equation}
\newacronym{pde}{PDE}{partial differential equation}
\newacronym{ph}{pH}{port-Hamiltonian}
\newacronym{rom}{ROM}{reduced-order model}

\begin{document}
\title{Structure-Preserving Model Reduction for Port-Hamiltonian Systems Based on a Special Class of Nonlinear Approximation Ansatzes}
\author{Philipp Schulze\,\thanks{Department of Mathematics, Chemnitz University of Technology, Chemnitz, Germany,
\texttt{philipp.schulze@math.tu-chemnitz.de}.}}

\maketitle            

\begin{abstract}
We discuss structure-preserving model order reduction for port-Hamiltonian systems based on an approximation of the full-order state by a linear combination of ansatz functions which depend themselves on the state of the reduced-order model. 
In recent years, such nonlinear approximation ansatzes have gained more and more attention especially due to their effectiveness in the context of model reduction for transport-dominated systems which are challenging for classical linear model reduction techniques. 
We demonstrate that port-Hamiltonian reduced-order models can often be obtained by a residual minimization approach where a special weighted norm is used for the residual. 
Moreover, we discuss sufficient conditions for the resulting reduced-order models to be stable. 
Finally, the methodology is illustrated by means of two transport-dominated numerical test cases, where the ansatz functions are determined based on snapshot data of the full-order state.
\vskip .3truecm

{\bf Keywords:} model order reduction, port-Hamiltonian systems, transport-dominated systems, stability, nonlinear approximation ansatzes
\vskip .3truecm

{\bf AMS(MOS) subject classification:} 34D20, 35Q49, 80A25, 93D30


\end{abstract}

\section{Introduction}

In applications where multi-query evaluations of a computational model are required, such as optimization, control, or uncertainty quantification, \gls{mor} techniques provide a powerful tool for accelerating the overall procedure, while maintaining an acceptable model accuracy.
For a general overview of such methods, we refer to \cite{Ant05b,BenOCW17,HesRS16,QuaMN16,SchVR08}.
While most \gls{mor} techniques are based on a linear approximation of the \gls{fom} state, recently, methods based on nonlinear approximation ansatzes have received more and more attention, cf.~\cite{BarF22,BlaSU20,CagMS19,FreDM21,KimCWZ22,LeeC19} and the references therein.
One reason for the great interest in the latter class of methods is that \gls{mor} methods based on linear approximation ansatzes are often inadequate for an effective reduction of transport-dominated systems, see for instance \cite{OhlR16}.

While nonlinear model reduction techniques may lead to very low-dimensional and still accurate \glspl{rom}, they do in general not guarantee that the \gls{rom} inherits important system properties of the corresponding \gls{fom}, such as stability.
This may lead to unphysical behavior of the \gls{rom} or to approximation errors which grow exponentially with respect to time.
In fact, the same issue also applies to most linear \gls{mor} methods and, thus, several \gls{mor} techniques have been proposed which preserve certain qualitative properties of the \gls{fom}, cf.~\cite{Ant05a,BenS17,BreU22,CasLE12,CheS21,MonTC13,Pul19,Sor05}.
In this paper, we are especially interested in structure-preserving \gls{mor} methods for \gls{ph} systems, since these come with many desirable properties, cf.~\cite{Kot19,MehU23,SchJ14} for a general overview.
For instance, a \gls{ph} structure implies passivity and often also stability of the dynamical system.
In addition, \gls{ph} structures are closed under power-preserving interconnection, which makes them especially attractive for control purposes \cite{DuiMSB09,OrtSCA08,Sch20,SchPFWM21} and for modeling networks \cite{AltMU21,FiaZOSS13}. 
Besides, since the energy plays the central role in the \gls{ph} framework, it is suitable for a wide range of applications \cite{AltS17,BanSAZISW21,BruAPM19a,BruAPM19b,GerHRS21,HoaCJL11,MacM04,MorLMRY21,RamMS13,RasCSS21,WanMS18,WarBST21} as well as for the coupling of different physical domains \cite{CarMP17,FalH17,VosS14,VuLM16,ZhoWHLW21}.

The literature on structure-preserving \gls{mor} techniques for \gls{ph} systems is mainly focused on linear time-invariant systems and linear approximation ansatzes.
Common techniques are based on balanced truncation \cite{BorSF21,BreMS22,HarVS10,PolS12} or transfer function interpolation \cite{EggKLMM18,GifWPL14,GugPBS12,IonA13a,WolLEK10}.
Aside from these projection-based techniques, also direct optimization approaches have been proposed for obtaining a port-Hamiltonian \gls{rom}, cf.~\cite{Sat18,SchV21}.
In \cite{ChaBG16}, the authors present a structure-preserving \gls{mor} approach for nonlinear \gls{ph} systems using a linear approximation ansatz based on the proper orthogonal decomposition or the iterative rational Krylov algorithm presented in \cite{GugAB08}.
In addition, they propose a structure-preserving variant of the discrete empirical interpolation method \cite{ChaS10}, which enables an efficient evaluation of the \gls{rom}.

While the contributions mentioned in the previous paragraph are based on linear approximation ansatzes, we present in this paper a structure-preserving \gls{mor} approach for a special class of nonlinear \gls{ph} systems based on a special class of nonlinear approximation ansatzes, cf.~\cref{sec:problemSetting} for the details.
The main contributions are listed in the following.

\begin{itemize}
	\item In \Cref{thm:linearPH_linearMOR} we show that most classical structure-preserving \gls{mor} techniques for linear \gls{ph} systems are optimal in the sense that the derivative of the \gls{rom} state minimizes a weighted norm of the residual.
		Moreover, in \Cref{rem:motivationForWeightedResNorm} we present another motivation for using this weighted norm by providing a relation to residual-based error bounds.
	\item We present a structure-preserving \gls{mor} approach using a special class of nonlinear approximation ansatzes which is especially relevant in the context of transport-dominated systems.
		Especially, this approach is not limited to linear systems, but may be applied to a special class of nonlinear \gls{ph} systems, cf.~\Cref{thm:nonlinearPHDAE_Q_factorizableMOR}.
	\item We identify a subclass of nonlinear ansatzes which allows to obtain \glspl{rom} which are at the same time \gls{ph} and optimal in terms of residual minimization, provided that the corresponding \gls{fom} is a linear \gls{ph} system, cf.~\Cref{thm:LTVPH_separableMOR}.
	\item We provide sufficient conditions which ensure that the state equations of the \gls{ph} reduced-order models with vanishing input signal are stable, cf.~\Cref{cor:nonlinearPHDAE_Q_factorizableMOR_stability}, \Cref{rem:nonlinearPHDAE_Q_factorizableMOR_stability_specialCases}, and \Cref{cor:nonlinearPHDAE_Q_separableMOR_stability}.
	\item We present a new \gls{ph} representation of a wildland fire model which has been considered for example in \cite{BlaSU21b,ManBBCDKV08}, cf.~\cref{sec:wildlandFire}.
\end{itemize}

The remainder of this manuscript is structured as follows.
In \cref{sec:problemSetting} we briefly present the mathematical setting considered in this paper by introducing the treated classes of \gls{ph} systems and approximation ansatzes.
We proceed by providing some preliminary definitions and results in \cref{sec:preliminaries} with particular focus on differential equations, \gls{ph} systems, and \gls{mor}.
The main results are presented in \cref{sec:pHMOR} where we especially demonstrate how to achieve structure-preserving \gls{mor} for \gls{ph} systems based on linear and special nonlinear approximation ansatzes.
Finally, we illustrate the theoretical results by means of two numerical test cases in \cref{sec:numerics} and provide a summary and an outlook in \cref{sec:conclusion}.

\paragraph*{Notation}

The set of real numbers is denoted with $\RR$ and we use $\RR^{m, n}$ for the set of $m\times n$ matrices with real-valued entries.
In particular, the $n\times n$ identity matrix is denoted by $I_n$ and the transpose of a matrix $A$ by $A^\top$.
Furthermore, to indicate that a matrix $A\in\RR^{m,m}$ is positive (semi-)definite, we use the notation $A>0$ ($A\ge 0$).
In addition, $\colsp(A)$, $\maxSingularVal(A)$, and $\minSingularVal(A)$ denote the column space, the maximum, and the minimum singular value of a matrix $A$, respectively.
Besides, we use the abbreviations
\begin{equation*}
	\diag(a_1,\ldots,a_n) \vcentcolon= 
	\begin{bmatrix}
		a_1 & 			& \\
			& \ddots 	& \\
			&			& a_n 
	\end{bmatrix}
	,\quad \tridiag_n(a,b,c) \vcentcolon=
	\begin{bmatrix}
		b 	& c & 			& 			& 			& 	& \\
		a 	& b & c 			& 			& 			& 	& \\
		 	& a & b 			& c 			& 			& 	& \\
		 	& 	& \ddots 	& \ddots 	& \ddots 	& 	& \\
		 	& 	& 			& a 			& b 			& c & \\
		 	& 	& 			&  			& a 			& b & c\\
		 	& 	& 			&  			&  			& a & b
	\end{bmatrix}
	\in\RR^{n,n}
\end{equation*}
for diagonal matrices and tridiagonal Toeplitz matrices, respectively.
For column vectors, we abbreviate $\RR^{m,1}$ as $\RR^m$ and we write $\norm{\cdot}$ for the Euclidean norm on $\RR^m$.
Given an interval $\Omega\vcentcolon=(a,b)$ with $a\in\RR$ and $b\in\RR_{>a}$, we denote the Hilbert space of square-integrable functions over $\Omega$ as $\LTwo{\Omega}$ and use $\ip{\cdot,\cdot}_{\LTwo{\Omega}}$ for the corresponding inner product.
Furthermore, for the Sobolev space of square-integrable functions with square-integrable weak derivative, we use the notation $H^1(\Omega)$.
The spaces of continuous and continuously differentiable functions from a suitable subset $U\subseteq \RR^m$ to $\RR^n$ are denoted with $C(U,\RR^n)$ and $C^1(U,\RR^n)$, respectively.
Finally, for a function $f$ depending on multiple variables $x_1,\ldots,x_m$, we use the short-hand notation $\partial_{x_i}f\vcentcolon=\frac{\partial f}{\partial x_i}$ for the partial derivative of $f$ with respect to $x_i$ for $i\in\lbrace 1,\ldots,m\rbrace$.

\section{Problem Setting}
\label{sec:problemSetting}
In this paper, we consider the problem of structure-preserving \gls{mor} for \gls{ph} systems of the form
\begin{subequations}
	\label{eq:nonlinearPHDAE_Q}
	\begin{align}
		\label{eq:nonlinearPHDAE_Q_stateEq}
		E(t,\sol(t))\dot{\sol}(t)+\rph(t,\sol(t)) 	&= (J(t,\sol(t))-R(t,\sol(t)))Q(t,\sol(t))\sol(t)+B(t,\sol(t))u(t),\\
		y(t) 												&= B(t,\sol(t))^\top Q(t,\sol(t))\sol(t)
	\end{align}
\end{subequations}
for all $t\in\timeInterval \vcentcolon= [t_0,\tend]$, with $t_0\in\RR_{\ge 0}$, $\tend\in\RR_{>t_0}$, state $\sol\colon \timeInterval\to \RR^{\fulldim}$, input port $u\colon \RR_{\ge 0}\to \RR^m$, output port $y\colon \timeInterval\to \RR^m$, and coefficients $E,J,R,Q\in C(\RR_{\ge 0}\times \RR^{\fulldim},\RR^{\fulldim,\fulldim})$, $\rph\in C(\RR_{\ge 0}\times \RR^{\fulldim},\RR^{\fulldim})$, and $B\in C(\RR_{\ge 0}\times \RR^{\fulldim},\RR^{\fulldim,m})$.
Associated with \eqref{eq:nonlinearPHDAE_Q} we consider a Hamiltonian $\ham\in C^1(\RR_{\ge 0}\times \RR^n)$ and the coefficient functions are required to satisfy
\begin{equation}
	\label{eq:nonlinearPHDAEProperties_Q_new}
	\begin{aligned}
		J(t,\sol) &= -J(t,\sol)^\top, &\nabla_{\sol}\ham(t,\sol) &= E(t,\sol)^\top Q(t,\sol)\sol,\\ 
		R(t,\sol) &= R(t,\sol)^\top	\ge 0,\quad &\partial_t\ham(t,\sol) &= \rph(t,\sol)^\top Q(t,\sol)\sol
	\end{aligned}
\end{equation}
for all $(t,\sol)\in \RR_{\ge 0}\times \RR^{\fulldim}$.
The \gls{ph} structure given by \eqref{eq:nonlinearPHDAE_Q}--\eqref{eq:nonlinearPHDAEProperties_Q_new} is a special case of the one introduced in \cite{MehM19}, where the authors consider in addition a feedthrough term, a possibly non-square $E$ matrix, and a general function $\zph\in C(\RR_{\ge 0}\times \RR^{\fulldim},\RR^{\fulldim})$ instead of $\zph$ being defined as $\zph(t,\sol) \vcentcolon= Q(t,\sol)\sol$.
While the framework presented in \cref{sec:pHMOR} may be extended to systems with non-vanishing feedthrough term and some of the results also to the case of a non-square $E$ matrix, the assumption on the special structure of $\zph$ is essential.

In \cite{MehM19} it is illustrated that the temporal change of the Hamiltonian along solution trajectories of \eqref{eq:nonlinearPHDAE_Q_stateEq} is bounded from above by the supplied power $y^\top u$.
Under additional assumptions on the Hamiltonian, this leads to a stability result for the unforced system \eqref{eq:nonlinearPHDAE_Q_stateEq} with $u=0$, cf.~\cref{sec:pH}.
In order to obtain \glspl{rom} which inherit these properties, it is desirable to develop \gls{mor} schemes which preserve the structure \eqref{eq:nonlinearPHDAE_Q}--\eqref{eq:nonlinearPHDAEProperties_Q_new}.

While structure-preserving \gls{mor} based on linear approximation ansatzes has been investigated since at least a decade, the aim of this paper is to present structure-preserving \gls{mor} schemes based on nonlinear approximation ansatzes.
While we do not address the most general class of nonlinear ansatzes in this paper, we consider ansatzes of the form
\begin{equation}
	\label{eq:timeDependentFactorizableAnsatz}
	\sol(t) \approx \Vr(t,\redState(t))\redState(t)
\end{equation}
with $\Vr\colon \RR_{\ge 0}\times \RR^{\rDim}\to \RR^{\fulldim,\rDim}$ and \gls{rom} state $\redState\colon\timeInterval\to \RR^{\rDim}$.
Such ansatzes are especially relevant in the context of \gls{mor} for transport-dominated systems, see \cref{sec:factorizableMOR,sec:separableMOR,sec:numerics} for some examples.

Based on the considered class of approximation ansatzes, the task considered in this work is to introduce a projection-based \gls{mor} framework for constructing port-Hamiltonian \glspl{rom} of the form
\begin{subequations}
	\label{eq:nonlinearPHDAE_Q_factorizableMOR}
	\begin{align}
		\label{eq:nonlinearPHDAE_Q_factorizableMOR_stateEq}
		\redE(t,\redState(t))\dot{\redState}(t)+\redrph(t,\redState(t)) 	&= (\redJ(t,\redState(t))-\redR(t,\redState(t)))\redQ(t,\redState(t))\redState(t)+\redB(t,\redState(t)) u(t),\\
		\redy(t)																				&= \redB(t,\redState(t))^\top\redQ(t,\redState(t))\redState(t)
	\end{align}
\end{subequations}
for all $t\in\timeInterval$, with $\redE,\redJ,\redR,\redQ\colon\RR_{\ge 0}\times \RR^{\rDim}\to\RR^{\rDim,\rDim}$, $\redrph\colon \RR_{\ge 0}\times \RR^{\rDim}\to\RR^{\rDim}$, $\redB\colon \RR_{\ge 0}\times \RR^{\rDim}\to\RR^{\rDim,m}$, and $\redy\colon \timeInterval\to\RR^m$.
Here, the \gls{rom} coefficient functions are assumed to satisfy conditions analogous to \eqref{eq:nonlinearPHDAEProperties_Q_new} with \gls{rom} Hamiltonian $\redHam\colon \RR_{\ge 0}\times\RR^{\rDim}\to\RR$ defined via
\begin{equation}
	\label{eq:redHam}
	\redHam(t,\redState) \vcentcolon= \ham(t,\Vr(t,\redState)\redState).
\end{equation}
In addition to the structure preservation itself, we are also interested in deriving conditions which ensure that the state equation of the unforced \gls{rom} with $u=0$ has a stable equilibrium point in the origin.

\begin{remark}[Preservation of algebraic constraints]
	\label{rem:algebraicConstraints}
	We emphasize that the general \gls{ph} structure \eqref{eq:nonlinearPHDAE_Q}--\eqref{eq:nonlinearPHDAEProperties_Q_new} includes the case of a singular $E$ matrix.
	In this context one is often also interested in preserving the algebraic constraints of the system, see for instance \cite{BenS17} and the references therein.
	However, in the following we only focus on preserving the \gls{ph} structure, while we refer to \cite{BeaGM22,EggKLMM18,HauMM19b,MehU23} for contributions focusing on structure-preserving \gls{mor} for port-Hamiltonian \gls{dae} systems.
\end{remark}

\section{Preliminaries}
\label{sec:preliminaries}

In this section, we present some preliminary definitions and results needed for the following sections.
We start by addressing general differential equation systems, existence and uniqueness of their solutions, and stability of equilibrium points in \cref{sec:DEsAndStability}.
Port-Hamiltonian systems and some of their properties are treated in \cref{sec:pH}, while \cref{sec:projectionMOR} is devoted to \gls{mor} schemes based on different projection techniques.

\subsection{Differential Equations and Stability}
\label{sec:DEsAndStability}

Throughout this paper, we consider finite-dimensional systems of the form
\begin{subequations}
	\label{eq:finiteDimensionalODE_IVP}
	\begin{align}
		\label{eq:finiteDimensionalODE}
		E(t,\sol(t))\dot{\sol}(t) 	&= \F(t,\sol(t))\quad\text{for all }t\in\timeInterval,\\ 
		\label{eq:finiteDimensionalODE_IC}
		\sol(t_0)						&=\sol_0
	\end{align}
\end{subequations}
with time interval $\timeInterval=[t_0,\infty)$ or $\timeInterval=[t_0,\tend]$ with $t_0\in\RR_{\ge 0}$ and $\tend\in\RR_{>t_0}$, mass matrix $E\colon \RR_{\ge 0}\times\RR^{\fulldim}\to\RR^{\fulldim,\fulldim}$, right-hand side $\F\colon \RR_{\ge 0}\times\RR^{\fulldim}\to\RR^{\fulldim}$, and initial value $\sol_0\in\RR^{\fulldim}$.
In the following we consider mainly the case where $E$ is pointwise invertible, while we refer to the \gls{dae} literature for the more general case, see for instance \cite{KunM06}.

We call $\sol\in C(\timeInterval,\RR^{\fulldim})$ a \emph{solution of} \eqref{eq:finiteDimensionalODE} if $\sol$ is differentiable in $\timeInterval$ and satisfies \eqref{eq:finiteDimensionalODE}.
If in addition \eqref{eq:finiteDimensionalODE_IC} holds, then we call $\sol$ a \emph{solution of the initial value problem} \eqref{eq:finiteDimensionalODE_IVP}.
Based on certain assumptions on $E$ and $F$, the following theorem provides a statement about the existence and uniqueness of solutions as well as on the maximal existence interval.
The result may be shown by exploiting the pointwise invertibility of $E$ and standard \gls{ode} theory, cf.~\cite[sec.~2.4]{Vid93}.
Moreover, we emphasize that the right endpoint $\deltaMax$ of the maximal existence interval depends in general on $\sol_0$ and $t_0$.

\begin{theorem}[Maximal existence interval]
	\label{thm:globalExistenceAndUniqueness}
	Consider the system \eqref{eq:finiteDimensionalODE} with $\F\in C^1(\RR_{\ge 0}\times\RR^{\fulldim},\RR^{\fulldim})$ and pointwise invertible $E\in C^1(\RR_{\ge 0}\times\RR^{\fulldim},\RR^{\fulldim,\fulldim})$.
	Then, for each $(t_0,\sol_0)\in\RR_{\ge 0}\times\RR^{\fulldim}$ exactly one of the following two assertions is true.
	\begin{enumerate}[(i)]
		\item \label{itm:finiteBlowup}There exists a unique $\deltaMax\in\RR_{>t_0}$ such that for any $\tend\in(t_0,\deltaMax)$ the initial value problem \eqref{eq:finiteDimensionalODE_IVP} with $\timeInterval=[t_0,\tend]$ and $\sol(t_0)=\sol_0$ has a unique solution, but not for any larger end time $\tend\ge \deltaMax$.
			Furthermore, we have $\lim\limits_{t\nearrow\deltaMax}\norm{\sol(t)} = \infty$.
		\item  For any $\tend\in\RR_{>t_0}$, the initial value problem \eqref{eq:finiteDimensionalODE_IVP} with $\timeInterval=[t_0,\tend]$ and $\sol(t_0)=\sol_0$ has a unique solution.
	\end{enumerate}	 
\end{theorem}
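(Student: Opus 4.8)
The plan is to reduce the assertion to classical ODE theory by exploiting the pointwise invertibility of $E$. Since $E\in C^1(\RR_{\ge 0}\times\RR^{\fulldim},\RR^{\fulldim,\fulldim})$ is pointwise invertible, the map $(t,\sol)\mapsto E(t,\sol)^{-1}$ is again $C^1$ (the entries of the inverse are rational functions of the entries of $E$ with nonvanishing denominator $\det E(t,\sol)$). Hence $G(t,\sol)\vcentcolon= E(t,\sol)^{-1}\F(t,\sol)$ defines a function in $C^1(\RR_{\ge 0}\times\RR^{\fulldim},\RR^{\fulldim})$, and \eqref{eq:finiteDimensionalODE} is equivalent to the explicit system $\dot{\sol}(t)=G(t,\sol(t))$ in the sense that a differentiable $\sol$ solves one if and only if it solves the other. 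In particular, $\sol\in C(\timeInterval,\RR^{\fulldim})$ is a solution of \eqref{eq:finiteDimensionalODE_IVP} precisely when it solves the IVP $\dot{\sol}=G(t,\sol)$, $\sol(t_0)=\sol_0$.

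Next I would invoke the standard Picard--Lindelöf theory for explicit ODEs with $C^1$ right-hand side: since $G$ is continuous and locally Lipschitz in $\sol$ uniformly on compact $t$-intervals, for each $(t_0,\sol_0)$ there is a unique maximal solution $\sol$ defined on a half-open interval $[t_0,\deltaMax)$ with $\deltaMax\in(t_0,\infty]$, and if $\deltaMax<\infty$ then the solution leaves every compact set as $t\nearrow\deltaMax$. Because the system is posed on all of $\RR^{\fulldim}$ (no state constraints), leaving every compact subset of $\RR_{\ge0}\times\RR^{\fulldim}$ with the $t$-component bounded by $\deltaMax$ forces $\norm{\sol(t)}\to\infty$ as $t\nearrow\deltaMax$; this is exactly the blow-up alternative. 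The dichotomy in the statement is then just the case distinction $\deltaMax<\infty$ versus $\deltaMax=\infty$: in the first case, for any $\tend\in(t_0,\deltaMax)$ the restriction of the maximal solution to $[t_0,\tend]$ is the unique solution of \eqref{eq:finiteDimensionalODE_IVP}, and no solution can exist on $[t_0,\tend]$ for $\tend\ge\deltaMax$ (a solution there would extend the maximal one past $\deltaMax$, contradicting maximality — using here that a solution on a closed interval $[t_0,\tend]$ can always be continued slightly beyond $\tend$ by Picard--Lindelöf applied at the right endpoint); in the second case, restriction to $[t_0,\tend]$ yields a solution for every $\tend>t_0$. Uniqueness of $\deltaMax$ is immediate since it is characterized as the supremum of existence times.

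The only genuinely nonroutine point is justifying that $E^{-1}$ inherits the $C^1$ regularity and that the transition between the implicit and explicit formulations preserves the solution concept and maximal existence interval; both are elementary, and the cited reference \cite[sec.~2.4]{Vid93} covers the ODE-theoretic half. Everything else is bookkeeping around the standard maximal-solution theorem. I would present the argument in essentially this order: (1) pass to $\dot\sol=G(t,\sol)$ with $G\in C^1$; (2) quote existence, uniqueness, and the maximal interval $[t_0,\deltaMax)$; (3) derive the blow-up characterization from the escape-from-compacts property; (4) translate back to the closed-interval formulation \eqref{eq:finiteDimensionalODE_IVP} to obtain the stated dichotomy, and note the dependence of $\deltaMax$ on $(t_0,\sol_0)$.
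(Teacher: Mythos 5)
Your proposal is correct and follows exactly the route the paper indicates: it gives no detailed proof but states that the result follows by exploiting the pointwise invertibility of $E$ (passing to the explicit system $\dot{\sol}=E^{-1}\F$, whose right-hand side inherits the $C^1$ regularity) together with standard ODE theory on maximal solutions and the finite-time blow-up alternative, citing \cite[sec.~2.4]{Vid93}. Your additional bookkeeping on translating between the maximal half-open interval and the closed-interval solution concept of \eqref{eq:finiteDimensionalODE_IVP} is sound and fills in precisely what the paper leaves to the reader.
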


In this work we are especially interested in the stability properties of a dynamical system.
To this end, we introduce in the following the notion of equilibrium points in general and of uniformly stable equilibrium points in particular.

\begin{definition}[Equilibrium point] 
	\label{def:equilibrium}
	For given $\F\colon \RR_{\ge 0}\times\RR^{\fulldim}\to\RR^{\fulldim}$ and pointwise invertible $E\colon \RR_{\ge 0}\times\RR^{\fulldim}\to\RR^{\fulldim,\fulldim}$, we call $\sol^*\in\RR^n$ an \emph{equilibrium point} of \eqref{eq:finiteDimensionalODE} if $\F(t,\sol^*)=0$ holds for all $t\in\RR_{\ge 0}$.
\end{definition}

\begin{definition}[Uniform stability]
	\label{def:stabilityNotions}
	Consider \eqref{eq:finiteDimensionalODE} with $\F\in C(\RR_{\ge 0}\times\RR^{\fulldim},\RR^{\fulldim})$, pointwise invertible $E\in C(\RR_{\ge 0}\times\RR^{\fulldim},\RR^{\fulldim,\fulldim})$, and equilibrium point $0\in\RR^{\fulldim}$.
	Besides, for any $(t_0,\sol_0)\in\RR_{\ge 0}\times\RR^{\fulldim}$, let the initial value problem \eqref{eq:finiteDimensionalODE_IVP} be uniquely solvable on $[t_0,\infty)$.
	We denote the evaluation of this solution at $t\in\RR_{\ge t_0}$ by $s(t,t_0,\sol_0)$.
	Then, we call the equilibrium point $0$ \emph{uniformly stable}, if for each $\epsilon \in\RR_{>0}$ there exists a $\delta\in\RR_{>0}$ such that
	\begin{equation}
		\label{eq:stabilityCriterion}
		\norm{s(t,t_0,\sol_0)}< \epsilon
	\end{equation}
	holds for all $t_0\in\RR_{\ge 0}$, $t\in\RR_{\ge t_0}$, and $\sol_0\in\RR^{\fulldim}$ with $\norm{\sol_0}<\delta$.
\end{definition}

In \cref{sec:pHMOR} the main tool used for investigating the uniform stability as in \Cref{def:stabilityNotions} is given by globally quadratic Lyapunov functions as introduced in the following definition.
We emphasize that this definition is inspired by standard Lyapunov theory for \gls{ode} systems with $E=I_{\fulldim}$, see for instance \cite[Thm.~4.10]{Kha02}, and by the \gls{ph} formulation introduced in \cite{MehM19}.

\begin{definition}[Globally quadratic Lyapunov function]
	\label{def:Lyapunov}
	We consider the system \eqref{eq:finiteDimensionalODE} with $E$ and $F$ as in \Cref{def:equilibrium}.
	The mapping $V\colon \RR_{\ge 0}\times \RR^{\fulldim}\rightarrow \RR$ is called a \emph{globally quadratic Lyapunov function} of \eqref{eq:finiteDimensionalODE} if the following conditions are satisfied.
	\begin{enumerate}[(i)]
		\item \label{itm:pHLyapunov}The function $V$ is continuously differentiable.
			Moreover, there exist a function $\zph\colon \RR_{\ge 0}\times \RR^n\rightarrow \RR^n$ and a constant $c_1\in\RR_{\ge0}$ such that for all $(t,\sol)\in\RR_{\ge 0}\times \RR^{\fulldim}$ we have
			\begin{align}
				\nabla_\sol V(t,\sol) 															&= E(t,\sol)^\top\zph(t,\sol)\\
				\label{eq:decreasingLyapunov}
				\text{and}\quad\partial_t V(t,\sol)+\zph(t,\sol)^\top \F(t,\sol) 	&\leq -c_1\norm{\sol}^2.
			\end{align}
		\item \label{itm:normEquivalence} There exist constants $c_2,c_3\in\RR_{>0}$ with
			\begin{equation*}
				c_2\lVert \sol\rVert^2 \le V(t,\sol) \le  c_3\lVert \sol\rVert^2\quad \text{for all }(t,\sol)\in\RR_{\ge 0}\times\RR^n.
			\end{equation*}
	\end{enumerate}
\end{definition}

Similarly as in the standard case $E=I_{\fulldim}$, the following theorem provides a relation between the existence of a Lyapunov function as defined in \Cref{def:Lyapunov} and uniform stability of the equilibrium point $0$.

\begin{theorem}[Lyapunov's theorem for \eqref{eq:finiteDimensionalODE}]
	\label{thm:Lyapunov}
	Consider the system \eqref{eq:finiteDimensionalODE} with $E$ and $F$ as in \Cref{def:stabilityNotions}.
	If there exists a globally quadratic Lyapunov function of \eqref{eq:finiteDimensionalODE}, then the equilibrium point $0$ is uniformly stable.
\end{theorem}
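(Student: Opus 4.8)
The plan is to adapt the classical Lyapunov argument (see \cite[Thm.~4.8]{Kha02}) to the mass-matrix setting by using the globally quadratic Lyapunov function $V$ as a measure of the size of the solution. Fix an arbitrary $\error\in\RR_{>0}$; by \Cref{def:stabilityNotions} we must produce a single $\delta\in\RR_{>0}$, independent of $t_0$, such that every solution starting with $\norm{\sol_0}<\delta$ stays inside the ball of radius $\error$ for all future time. The key observation is that \eqref{eq:decreasingLyapunov} together with the chain rule gives, along any solution $\sol(\cdot)=s(\cdot,t_0,\sol_0)$,
\begin{equation*}
	\frac{\diff}{\diff t}V(t,\sol(t)) = \partial_t V(t,\sol(t)) + \nabla_\sol V(t,\sol(t))^\top\dot{\sol}(t) = \partial_t V(t,\sol(t)) + \zph(t,\sol(t))^\top E(t,\sol(t))\dot{\sol}(t),
\end{equation*}
and since $E(t,\sol(t))\dot{\sol}(t)=\F(t,\sol(t))$ by \eqref{eq:finiteDimensionalODE}, the right-hand side equals $\partial_t V(t,\sol(t)) + \zph(t,\sol(t))^\top\F(t,\sol(t))\le -c_1\norm{\sol(t)}^2\le 0$. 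Hence $t\mapsto V(t,\sol(t))$ is non-increasing along solutions. (Here I use that the solution is assumed to exist on all of $[t_0,\infty)$, as stipulated in \Cref{def:stabilityNotions}, and that $V$ is $C^1$, so $t\mapsto V(t,\sol(t))$ is itself $C^1$ by the chain rule.)

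Next I would convert this monotonicity into a bound in the Euclidean norm using condition~\eqref{itm:normEquivalence} of \Cref{def:Lyapunov}. Given $\error$, set $\delta\vcentcolon=\sqrt{c_2/c_3}\,\error$ and suppose $\norm{\sol_0}<\delta$. Then for any $t_0\in\RR_{\ge 0}$ and any $t\in\RR_{\ge t_0}$,
\begin{equation*}
	c_2\norm{s(t,t_0,\sol_0)}^2 \le V(t,s(t,t_0,\sol_0)) \le V(t_0,\sol_0) \le c_3\norm{\sol_0}^2 < c_3\delta^2 = c_2\error^2,
\end{equation*}
where the first inequality is the lower bound in~\eqref{itm:normEquivalence}, the second is the monotonicity established above, the third is the upper bound in~\eqref{itm:normEquivalence}, and the last is the choice of $\delta$. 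Dividing by $c_2>0$ and taking square roots yields $\norm{s(t,t_0,\sol_0)}<\error$, which is exactly~\eqref{eq:stabilityCriterion}. Since $\delta$ depends only on $\error$, $c_2$, and $c_3$ — and not on $t_0$ — this establishes uniform stability of the equilibrium point $0$.

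The argument is essentially routine once the framework is in place, so there is no genuinely hard step; the one point that deserves care is the justification that $t\mapsto V(t,\sol(t))$ is differentiable with the derivative computed by the chain rule, and that the substitution $E(t,\sol(t))\dot{\sol}(t)=\F(t,\sol(t))$ is legitimate — this is where the structural assumption $\nabla_\sol V(t,\sol)=E(t,\sol)^\top\zph(t,\sol)$ is used in an essential way, since it is what allows the term $\nabla_\sol V^\top\dot{\sol}$ to be rewritten in terms of $\F$ without ever inverting $E$. It is also worth noting that the constant $c_1$ plays no role here: mere non-increase of $V$ (i.e.\ $c_1=0$ would suffice) is enough for uniform stability, and $c_1>0$ would only be needed for an asymptotic-stability statement, which is not claimed.
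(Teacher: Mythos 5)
Your proposal is correct and follows essentially the same route as the paper's proof: differentiate $t\mapsto V(t,s(t,t_0,\sol_0))$ via the chain rule, use $\nabla_\sol V = E^\top\zph$ together with $E\dot{\sol}=\F$ and \eqref{eq:decreasingLyapunov} to conclude monotone non-increase, and then combine with condition~\eqref{itm:normEquivalence} to obtain the same choice $\delta=\sqrt{c_2/c_3}\,\epsilon$. The only cosmetic difference is that the paper justifies differentiability of the composition by noting the solution is $C^1$ (from continuity of $\F$ and pointwise invertibility of $E$), whereas you invoke the chain rule directly; this does not affect the argument.
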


\begin{proof}
	Let $t_0\in\RR_{\ge 0}$ and $\sol_0\in\RR^{\fulldim}$ be arbitrary and let $s(t,t_0,\sol_0)$ denote the evaluation of the solution of the corresponding initial value problem \eqref{eq:finiteDimensionalODE_IVP} at $t\in\RR_{\ge t_0}$.
	Since $F$ is continuous and $E$ is continuous and pointwise invertible, we infer from \eqref{eq:finiteDimensionalODE} that the solution $s(t,t_0,\sol_0)$ is continuously differentiable with respect to $t$.
	Using this solution as well as a globally quadratic Lyapunov function $V$ of \eqref{eq:finiteDimensionalODE}, we introduce the mapping $\tilde{V}\colon \RR_{\ge t_0}\to \RR$ via $\tilde{V}(t)\vcentcolon= V(t,s(t,t_0,\sol_0))$.
	Condition \eqref{itm:pHLyapunov} in \Cref{def:Lyapunov} implies that $\tilde{V}$ is continuously differentiable and that its derivative satisfies
	\begin{align*}
		\dot{\tilde{V}}(t)	&= \partial_t V(t,s(t,t_0,\sol_0))+\nabla_\sol V(t,s(t,t_0,\sol_0))^\top\partial_ts(t,t_0,\sol_0)\\ 
								&= \partial_t V(t,s(t,t_0,\sol_0))+\zph(t,s(t,t_0,\sol_0))^\top E(t,s(t,t_0,\sol_0))\partial_ts(t,t_0,\sol_0)\\
								&= \partial_t V(t,s(t,t_0,\sol_0))+\zph(t,s(t,t_0,\sol_0))^\top \F(t,s(t,t_0,\sol_0))\\
								&\leq 0
	\end{align*}
	for all $t\in\RR_{\ge t_0}$.
	Thus, we infer
	\begin{equation*}
		V(t,s(t,t_0,\sol_0)) = \tilde{V}(t) \le \tilde{V}(t_0) = V(t_0,s(t_0,t_0,\sol_0)) = V(t_0,\sol_0)
	\end{equation*}
	for all $t\in\RR_{\ge t_0}$.
	Using condition (ii) from \Cref{def:Lyapunov}, we further obtain that there exist constants $c_2,c_3\in\RR_{>0}$ such that for all $t\in\RR_{\ge t_0}$ we have
	\begin{equation*}
 		\norm{s(t,t_0,\sol_0)}^2 \le \frac1{c_2} V(t,s(t,t_0,\sol_0)) \le \frac1{c_2} V(t_0,\sol_0) \le \frac{c_3}{c_2} \norm{\sol_0}^2.
	\end{equation*}
	This implies that \eqref{eq:stabilityCriterion} holds for instance when choosing $\delta = \sqrt{\frac{c_2}{c_3}}\epsilon$.
\end{proof}

\subsection{Port-Hamiltonian Systems}\label{sec:pH}

In the following, we only focus on finite-dimensional \gls{ph} systems without a feedthrough term.
For an overview of infinite-dimensional \gls{ph} systems, we refer to the recent survey article \cite{RasCSS20}.
Port-Hamiltonian formulations including a feedthrough term are for instance presented in \cite{MehM19,MehU23,Sch17a}.

We start by considering linear time-invariant \gls{ph} systems of the form
\begin{subequations}
	\label{eq:pHEQ}
	\begin{align}
		\label{eq:pHEQ_stateEq}
		E\dot{\sol}(t) 	&= \left((J-R)Q-K\right)\sol(t)+Bu(t),\\
		y(t)				&= B^\top Q\sol(t)
	\end{align}
\end{subequations}
for all $t\in\timeInterval$, with $B\in\RR^{\fulldim,m}$ and $E,J,R,Q,K\in\RR^{\fulldim,\fulldim}$ satisfying
\begin{equation}
	\label{eq:pHEQProperties}
	J = -J^\top,\quad R = R^\top \geq 0,\quad E^\top Q=Q^\top E\geq 0,\quad Q^\top K = -K^\top Q.
\end{equation}
We note that the structure \eqref{eq:pHEQ}--\eqref{eq:pHEQProperties} is a linear special case of \eqref{eq:nonlinearPHDAE_Q}--\eqref{eq:nonlinearPHDAEProperties_Q_new} with time-invariant coefficients and quadratic Hamiltonian $\ham\colon \RR^\fulldim\to \RR$ defined via $\ham(\sol)\vcentcolon=\frac{1}{2}\sol^\top E^\top Q\sol$.
Moreover, we emphasize that the matrix $K$ may be removed in the special case of an invertible $Q$ matrix, via replacing $J$ by $\tilde{J} \vcentcolon= J-KQ^{-1}$.

The properties \eqref{eq:pHEQProperties} imply that the Hamiltonian $\ham$ is a non-negative function, which in particular may only increase along solutions of \eqref{eq:pHEQ_stateEq} if the input $u$ and the output $y$ do not vanish.
To see this, let $u$ be such that \eqref{eq:pHEQ_stateEq} has a solution $\sol$ in $C^1(\timeInterval,\RR^{\fulldim})$.
Then, exploiting \eqref{eq:pHEQProperties} we obtain the so-called \emph{dissipation inequality}
\begin{equation}
	\label{eq:dissipationIneq_LTI}
	\begin{aligned}
		\frac{\diff}{\diff t}(\ham\circ \sol)(t) 	= -\sol(t)^\top Q^\top RQ\sol(t)+y(t)^\top u(t) \leq y^\top(t)u(t)
	 \end{aligned}
\end{equation}
for all $t\in\timeInterval$.
Usually, the Hamiltonian represents the stored energy of the system and \eqref{eq:dissipationIneq_LTI} corresponds to a power balance, where the term $\sol^\top Q^\top RQ\sol$ describes the internal energy \emph{dissipation} and the \emph{supply rate} $y^\top u$ the energy exchange with the environment or with other subsystems, see for instance \cite{SchJ14}.
Furthermore, systems for which a dissipation inequality of the form \eqref{eq:dissipationIneq_LTI} holds are typically called \emph{passive}, see for instance \cite{ByrIW91}.

The class of nonlinear \gls{ph} systems introduced in \cref{sec:problemSetting} is a special case of the class of \gls{ph} systems of the form
\begin{subequations}
	\label{eq:nonlinearPHDAE}
	\begin{align}
		\label{eq:nonlinearPHDAE_stateEq}
		E(t,\sol(t))\dot{\sol}(t)+\rph(t,\sol(t)) 	&= (J(t,\sol(t))-R(t,\sol(t)))\zph(t,\sol(t))+B(t,\sol(t))u(t),\\
		y(t) 												&= B(t,\sol(t))^\top\zph(t,\sol(t))
	\end{align}
\end{subequations}
for all $t\in\timeInterval$, with $E,J,R\in C(\RR_{\ge 0}\times \RR^n, \RR^{n,n})$, $\rph,\zph\in C(\RR_{\ge 0}\times \RR^n,\RR^n)$, and $B\in C(\RR_{\ge 0}\times \RR^n, \RR^{n,m})$.
Here, we require that the associated Hamiltonian $\ham\in C^1(\RR_{\ge 0}\times \RR^n)$ and $E$, $J$, $R$, $\rph$, $\zph$ satisfy pointwise
\begin{equation}
	\label{eq:nonlinearPHDAEProperties_new}
	\begin{aligned}
		J=-J^\top,\quad R=R^\top \ge 0, \quad \nabla_{\sol}\ham = E^\top \zph, \quad \partial_t \ham = \rph^\top\zph,
	\end{aligned}
\end{equation}
cf.~\cite{MehM19}.
In particular, we arrive at the special case presented in \cref{sec:problemSetting} if there exists $Q\in C(\RR_{\ge 0}\times \RR^{\fulldim},\RR^{\fulldim,\fulldim})$ satisfying $\zph(t,\sol)=Q(t,\sol)\sol$ for all $(t,\sol)\in\RR_{\ge 0}\times\RR^{\fulldim}$.
As demonstrated in \cref{sec:pHMOR}, this special case is of particular interest in the context of \gls{mor}, as it allows to derive a structure-preserving Petrov--Galerkin projection framework.

An important property of the nonlinear \gls{ph} system \eqref{eq:nonlinearPHDAE}--\eqref{eq:nonlinearPHDAEProperties_new} is that the corresponding Hamiltonian satisfies a dissipation inequality.
More precisely, for a given solution $\sol\in C^1(\timeInterval,\RR^{\fulldim})$ of \eqref{eq:nonlinearPHDAE_stateEq}, the function $\hamAux\colon \timeInterval\to\RR$ defined via $\hamAux(t) \vcentcolon= \ham(t,\sol(t))$ satisfies
\begin{equation}
	\label{eq:nonlinearPHDAE_dissipationIneq}
	\begin{aligned}
		\frac{\diff \hamAux}{\diff t}(t) = -\zph(t,\sol(t))^\top R(t,\sol(t)) \zph(t,\sol(t))+y(t)^\top u(t) \le y(t)^\top u(t)
	\end{aligned}
\end{equation}
for all $t\in\timeInterval$, cf.~\cite{MehM19}.
This dissipation inequality is an important property for the investigation of stability as well as the existence and uniqueness of solutions of the state equation \eqref{eq:nonlinearPHDAE_stateEq} with $u=0$ and pointwise invertible $E$.

\begin{theorem}[Stability of \eqref{eq:nonlinearPHDAE_stateEq}]
	\label{thm:globalExistenceAndUniqueness_pH}
	Consider the system \eqref{eq:nonlinearPHDAE_stateEq} with vanishing input $u=0$, $J,R\in C^1(\RR_{\ge 0}\times \RR^{\fulldim}, \RR^{\fulldim,\fulldim})$, $\rph,\zph\in C^1(\RR_{\ge 0}\times \RR^{\fulldim},\RR^{\fulldim})$, and pointwise invertible $E\in C^1(\RR_{\ge 0}\times \RR^{\fulldim}, \RR^{\fulldim,\fulldim})$.
	Furthermore, let \eqref{eq:nonlinearPHDAEProperties_new} be satisfied pointwise for some Hamiltonian $\ham\in C^1(\RR_{\ge 0}\times \RR^{\fulldim})$, which additionally fulfills condition \eqref{itm:normEquivalence} in \Cref{def:Lyapunov} with $\ham = V$.
	Then, the following assertions hold.
	\begin{enumerate}[(i)]
		\item \label{itm:pH_existenceAndUniqueness}For each initial value $\sol_0\in\RR^{\fulldim}$ and for any time interval $\timeInterval = [t_0,\tend]$ with $t_0\in\RR_{\ge 0}$ and $\tend\in\RR_{>t_0}$, the initial value problem associated with \eqref{eq:nonlinearPHDAE_stateEq}, $u=0$, and $\sol(t_0)=\sol_0$ has a unique solution on $\timeInterval$.
		\item \label{itm:pH_stability}If $\rph(t,0)=(J(t,0)-R(t,0))\zph(t,0)$ holds for all $t\in\RR_{\ge 0}$, then \eqref{eq:nonlinearPHDAE_stateEq} with $u=0$ has a uniformly stable equilibrium point at the origin.
	\end{enumerate}
\end{theorem}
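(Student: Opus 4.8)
The plan is to derive both assertions from the dissipation inequality \eqref{eq:nonlinearPHDAE_dissipationIneq} specialized to $u=0$, combined with the general results of \cref{sec:DEsAndStability}, using the Hamiltonian $\ham$ itself both as a global bound on the state and as a candidate Lyapunov function. The first observation is that for $u=0$ the state equation \eqref{eq:nonlinearPHDAE_stateEq} is exactly of the form \eqref{eq:finiteDimensionalODE} with right-hand side $\F(t,\sol)=(J(t,\sol)-R(t,\sol))\zph(t,\sol)-\rph(t,\sol)$, which is $C^1$ because $J$, $R$, $\zph$, and $\rph$ are; together with $E\in C^1$ pointwise invertible, this makes \Cref{thm:globalExistenceAndUniqueness} applicable for every $(t_0,\sol_0)$.

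To prove assertion~\eqref{itm:pH_existenceAndUniqueness} I would fix $(t_0,\sol_0)$ and rule out the finite-blow-up alternative~\eqref{itm:finiteBlowup} of \Cref{thm:globalExistenceAndUniqueness}. Assuming it occurred, let $\deltaMax$ be the associated maximal time and $\sol$ the unique solution on $[t_0,\deltaMax)$, which is $C^1$ there since $E$ is continuous and pointwise invertible and $\F$ is continuous. Then \eqref{eq:nonlinearPHDAE_dissipationIneq} with $u=0$ gives $\tfrac{\diff}{\diff t}\ham(t,\sol(t))=-\zph(t,\sol(t))^\top R(t,\sol(t))\zph(t,\sol(t))\le 0$, hence $\ham(t,\sol(t))\le\ham(t_0,\sol_0)$ for all $t\in[t_0,\deltaMax)$. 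Invoking the norm-equivalence bound~\eqref{itm:normEquivalence} of \Cref{def:Lyapunov} applied to $\ham=V$ yields $c_2\norm{\sol(t)}^2\le\ham(t,\sol(t))\le\ham(t_0,\sol_0)\le c_3\norm{\sol_0}^2$, so $\norm{\sol(t)}$ stays bounded on $[t_0,\deltaMax)$, contradicting $\lim_{t\nearrow\deltaMax}\norm{\sol(t)}=\infty$. Hence alternative~\eqref{itm:finiteBlowup} is impossible, so by \Cref{thm:globalExistenceAndUniqueness} the initial value problem is uniquely solvable on $[t_0,\infty)$ and, a fortiori, on every $\timeInterval=[t_0,\tend]$.

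For assertion~\eqref{itm:pH_stability} I would first note that the extra hypothesis $\rph(t,0)=(J(t,0)-R(t,0))\zph(t,0)$ is precisely $\F(t,0)=0$ for all $t$, so $0$ is an equilibrium point of \eqref{eq:nonlinearPHDAE_stateEq} with $u=0$ in the sense of \Cref{def:equilibrium}. I would then verify that $V\vcentcolon=\ham$ is a globally quadratic Lyapunov function: it is $C^1$ by assumption; with the same $\zph$ as in the system and $c_1=0$, the identity $\nabla_\sol\ham=E^\top\zph$ from \eqref{eq:nonlinearPHDAEProperties_new} supplies the gradient condition in \eqref{itm:pHLyapunov}, while $\partial_t\ham=\rph^\top\zph$, skew-symmetry of $J$, and $R\ge 0$ give $\partial_t\ham(t,\sol)+\zph(t,\sol)^\top\F(t,\sol)=\rph^\top\zph+\zph^\top(J-R)\zph-\zph^\top\rph=-\zph^\top R\zph\le 0$, i.e.\ \eqref{eq:decreasingLyapunov} with $c_1=0$; and condition~\eqref{itm:normEquivalence} is exactly the hypothesis imposed on $\ham$. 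Since assertion~\eqref{itm:pH_existenceAndUniqueness} guarantees unique solvability on $[t_0,\infty)$ for every initial datum, \Cref{thm:Lyapunov} then applies and yields uniform stability of the equilibrium $0$.

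The step I expect to require the most care is assertion~\eqref{itm:pH_existenceAndUniqueness}: one must justify applying the dissipation inequality on the half-open maximal interval $[t_0,\deltaMax)$ (it holds on each compact subinterval, where $\sol$ is $C^1$, which suffices) and note that the resulting bound $\norm{\sol(t)}^2\le(c_3/c_2)\norm{\sol_0}^2$ is uniform in $t$, so that it genuinely rules out unbounded growth as $t\nearrow\deltaMax$. Everything else is a matter of matching hypotheses: the $C^1$ regularity demanded by \Cref{thm:globalExistenceAndUniqueness}, the $C^1$ requirement on $\ham$ in \Cref{def:Lyapunov}, and the standing assumptions of \Cref{thm:Lyapunov}.
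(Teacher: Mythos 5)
Your proposal is correct and follows essentially the same route as the paper: ruling out the finite blow-up alternative of \Cref{thm:globalExistenceAndUniqueness} via the dissipation inequality and the norm-equivalence of $\ham$ for assertion (i), and verifying that $\ham$ is a globally quadratic Lyapunov function (with $c_1=0$) so that \Cref{thm:Lyapunov} yields assertion (ii). The only difference is cosmetic — you spell out the bound $c_2\norm{\sol(t)}^2\le c_3\norm{\sol_0}^2$ and the computation $\partial_t\ham+\zph^\top\F=-\zph^\top R\zph$ explicitly, which the paper leaves implicit.
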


\begin{proof}
	\begin{enumerate}[(i)]
		\item Using \Cref{thm:globalExistenceAndUniqueness} we conclude that, for a given initial value $\sol_0\in\RR^n$ and initial time $t_0\in\RR_{\ge 0}$, 
			the corresponding initial value problem associated with \eqref{eq:nonlinearPHDAE_stateEq} and $u=0$ is either uniquely solvable on any time interval $\timeInterval=[t_0,\tend]$ with $\tend\in\RR_{>t_0}$ or there is a maximal existence interval $[t_0,\deltaMax)$ with $\deltaMax\in\RR_{>t_0}$ and
			\begin{equation}
				\label{eq:stateBlowup}
				\lim\limits_{t\nearrow\deltaMax}\norm{\sol(t)}=\infty.
			\end{equation}
			Let us assume that for some $(t_0,\sol_0)\in\RR_{\ge 0}\times\RR^{\fulldim}$ the latter statement is true.
			Then, since the Hamiltonian satisfies condition \eqref{itm:normEquivalence} in \Cref{def:Lyapunov}, \eqref{eq:stateBlowup} implies
			\begin{equation*}
				\lim\limits_{t\nearrow\deltaMax}\ham(t,\sol(t)) =\infty.
			\end{equation*}
			However, this contradicts the inequality $\ham(t,\sol(t))\le \ham(t_0,\sol_0)$, which holds for any $t\ge t_0$ and follows from the dissipation inequality \eqref{eq:nonlinearPHDAE_dissipationIneq} in the case $u=0$.
			Thus, assertion \eqref{itm:pH_existenceAndUniqueness} holds.
		\item First, we note that the equation $\rph(\cdot,0)=(J(\cdot,0)-R(\cdot,0))\zph(\cdot,0)$ implies that $0\in\RR^{\fulldim}$ is an equilibrium point of \eqref{eq:nonlinearPHDAE_stateEq} with $u=0$.
			Furthermore, using \eqref{eq:nonlinearPHDAEProperties_new} we infer that the Hamiltonian satisfies not only condition \eqref{itm:normEquivalence} in \Cref{def:Lyapunov}, but also condition~\eqref{itm:pHLyapunov} with $c_1=0$.
			Thus, the Hamiltonian is a globally quadratic Lyapunov function of \eqref{eq:nonlinearPHDAE_stateEq} with $u=0$ and, hence, the claim follows by applying \Cref{thm:Lyapunov}.
	\end{enumerate}
\end{proof}

A notable special case of the nonlinear \gls{ph} structure \eqref{eq:nonlinearPHDAE}--\eqref{eq:nonlinearPHDAEProperties_new} and a generalization of the linear time-invariant structure \eqref{eq:pHEQ}--\eqref{eq:pHEQProperties} is given by linear time-varying systems of the form
\begin{subequations}
	\label{eq:pHLTV}
	\begin{align}
		\label{eq:pHLTV_stateEq}
		E(t)\dot{\sol}(t) 	&= \left((J(t)-R(t))Q(t)-K(t)\right)\sol(t)+B(t)u(t),\\
		y(t)				&= B(t)^\top Q(t)\sol(t)
	\end{align}
\end{subequations}
for all $t\in\timeInterval$, with $E,Q,J,R,K\in C(\RR_{\ge 0},\RR^{n,n})$ and $B\in C(\RR_{\ge 0},\RR^{n,m})$ satisfying $E^\top Q\in C^1(\RR_{\ge 0},\RR^{\fulldim,\fulldim})$  and pointwise
\begin{equation}
	\label{eq:pHLTVconditions}
	\begin{aligned}
		J = -J^\top,\quad R = R^\top \geq 0,\quad E^\top Q=Q^\top E\geq 0,\quad \frac{\diff }{\diff t}(Q^\top E) = Q^\top K+K^\top Q,
	\end{aligned}
\end{equation}
see also \cite{BeaMXZ18} for an alternative formulation of linear time-varying \gls{ph} systems.
Especially, we note that $0$ is an equilibrium point of \eqref{eq:pHLTV_stateEq} with $u=0$ and, thus, the additional requirement $\rph(\cdot,0)=(J(\cdot,0)-R(\cdot,0))\zph(\cdot,0)$ in \Cref{thm:globalExistenceAndUniqueness_pH}\eqref{itm:pH_stability} may me omitted in the linear case.
Furthermore, the assumption that the Hamiltonian $\ham\in C^1(\RR_{\ge 0}\times \RR^{\fulldim})$ defined via $\ham(t,\sol)\vcentcolon=\frac12\sol^\top E(t)^\top Q(t)\sol$ satisfies condition~\eqref{itm:normEquivalence} in \Cref{def:Lyapunov} is equivalent to the existence of $\tilde{c}_1,\tilde{c}_2\in\RR_{>0}$ with
\begin{equation*}
	\maxSingularVal(E(t)^\top Q(t)) \le \tilde{c}_1\quad \text{and}\quad \minSingularVal(E(t)^\top Q(t)) \ge \tilde{c}_2\quad \text{for all }t\in\RR_{\ge 0}.
\end{equation*}

\subsection{Projection-Based Model Reduction}
\label{sec:projectionMOR}

Classical \gls{mor} methods typically involve a projection of the \gls{fom} onto a low-dimensional linear subspace.
In the following, we consider a \gls{fom} of the form
\begin{equation}
	\label{eq:FOM_preliminaries}
	\dot{\sol}(t) = \F\left(t,\sol(t)\right)\text{ for all }t\in\timeInterval,\quad \sol(t_0) = \sol_0,
\end{equation}
with $\F\colon \RR_{\ge 0}\times\RR^{\fulldim}\rightarrow\RR^{\fulldim}$, $\sol_0\in\RR^{\fulldim}$, and $\sol\colon \timeInterval\to \RR^{\fulldim}$.
For instance, \eqref{eq:nonlinearPHDAE_Q_stateEq} may be written in this form with
\begin{equation*}
	\F(t,\sol) \vcentcolon= E(t,\sol)^{-1}\left((J(t,\sol)-R(t,\sol))Q(t,\sol)\sol+B(t,\sol)u(t)-\rph(t,\sol)\right),
\end{equation*}
provided that $E$ is pointwise invertible.
For the following considerations, we assume that we have given a suitable $\rDim$-dimensional linear subspace $\calV\subseteq \RR^{\fulldim}$ which is parametrized by a matrix $\Vr\in\RR^{\fulldim,\rDim}$ via $\calV = \colsp(\Vr)$.
A common approach for deriving a \gls{rom} is a Galerkin projection.
For this purpose, we substitute the linear approximation ansatz
\begin{equation}
	\label{eq:linearAnsatz}
	\sol(t) \approx \Vr\redState(t)
\end{equation}
into the \gls{fom} \eqref{eq:FOM_preliminaries} and obtain the residual
\begin{equation}
	\label{eq:residualGalerkin}
	\Vr\dot{\redState}(t)-\F\left(t,\Vr\redState(t)\right)
\end{equation}
at $t\in\timeInterval$.
An evolution equation for $\redState$ is then obtained by enforcing the residual to be orthogonal to $\calV = \colsp(\Vr)$.
In addition, the initial value of $\redState$ may be derived via an orthogonal projection of $\sol_0$ onto $\calV$.
The resulting \gls{rom} reads
\begin{equation}
	\label{eq:GROM}
	\MPOD\dot{\redState}(t) = \FRed(t,\redState(t)),\quad 
	\redState(t_0) = \redState_0 \quad \text{for all } t\in\timeInterval
\end{equation}
with \gls{rom} state $\redState\colon \timeInterval\to\RR^{\rDim}$, mass matrix $\MPOD\in\RR^{\rDim,\rDim}$, right-hand side $\FRed\colon\RR_{\ge 0}\times\RR^{\rDim}\to\RR^{\rDim}$, and initial value $\redState_0\in\RR^{\rDim}$ defined via
\begin{equation}
	\label{eq:GROMdefinitions}
	\MPOD \vcentcolon= \Vr^\top \Vr,\quad \FRed(t,\redState) \vcentcolon= \Vr^\top\F\left(t,\Vr\redState\right),\quad \MPOD\redState_0 \vcentcolon= \Vr^\top \sol_0.
\end{equation}
Usually, $\Vr$ is chosen such that its columns form an orthonormal basis, which leads to $\MPOD = I_{\rDim}$.

An important property is that, for given $t\in\timeInterval$ and given $\redState(t)\in\RR^{\rDim}$, the corresponding time derivative $\dot{\redState}(t)$ determined by the Galerkin \gls{rom} \eqref{eq:GROM} is optimal in the sense that it minimizes the norm of the residual \eqref{eq:residualGalerkin}.
Since the continuous-time residual is minimized, this property is called \emph{continuous optimality} in \cite{CarBA17} to distinguish it from an alternative approach which minimizes the residual after time discretization.

We note that the Galerkin method is a special case of a Petrov--Galerkin scheme.
In general, the Petrov--Galerkin method is based on enforcing the residual to be orthogonal to another $\rDim$-dimensional subspace $\calW$ which is parametrized by a matrix $\Wr\in\RR^{\fulldim,\rDim}$ via $\calW = \colsp(\Wr)$.
Here, $\calV$ and $\calW$ are chosen such that the compatibility condition 
\begin{equation}
	\label{eq:compatibiliyPetrovGalerkin}
	\calW^\perp \cap \calV = \zset
\end{equation}
is met.
This condition is in particular satisfied in the special case $\Vr = \Wr$ of the Galerkin method.
In general, a Petrov--Galerkin projection yields a \gls{rom} of the form \eqref{eq:GROM} where the mass matrix, right-hand side, and initial value are given by
\begin{align*}
	\MPOD \vcentcolon= \Wr^\top \Vr,\quad \FRed(t,\redState) \vcentcolon= \Wr^\top\F\left(t,\Vr\redState\right),\quad \MPOD\redState_0 \vcentcolon= \Wr^\top \sol_0.
\end{align*}
In particular, the invertibility of the mass matrix $\MPOD$ is guaranteed due to the compatibility condition \eqref{eq:compatibiliyPetrovGalerkin}.

Due to the shortcomings of linear projection methods, e.g., in the context of transport-dominated systems, nonlinear projection methods have received increasing attention in recent years.
In contrast to the linear ansatz \eqref{eq:linearAnsatz}, these methods are based on general ansatzes of the form
\begin{equation}
	\label{eq:nonlinearAnsatz}
	\sol(t) \approx \gr(\redState(t)),
\end{equation}
with $\gr\in C^1(\RR^{\rDim},\RR^{\fulldim})$.
A method for constructing a \gls{rom} based on such a general ansatz has, for example, been proposed in \cite{LeeC19} and is based on residual minimization.
It is inspired by the optimality property of the Galerkin \gls{rom} mentioned after \eqref{eq:GROMdefinitions} and leads to a \gls{rom} of the form
\begin{equation*}
	\MPOD(\redState(t))\dot{\redState}(t) = \FRed(t,\redState(t)),\quad 
	\redState(t_0) = \redState_0 \quad \text{for all } t\in\timeInterval
\end{equation*}
with mass matrix $\MPOD\colon \RR^{\rDim}\to\RR^{\rDim,\rDim}$ and right-hand side $\FRed\colon\RR_{\ge 0}\times\RR^{\rDim}\to\RR^{\rDim}$ defined via
\begin{align*}
	\MPOD(\redState) \vcentcolon= (\gr'(\redState))^\top \gr'(\redState),\quad \FRed(t,\redState) \vcentcolon= (\gr'(\redState))^\top\F\left(t,\gr(\redState)\right).
\end{align*}
Here, $\gr'\colon \RR^{\rDim}\to\RR^{\fulldim,\rDim}$ denotes the derivative of $\gr$.
The choice of the initial value is more involved than in the linear case, since it is in general not clear if there exists an optimal $\redState_0\in\RR^{\rDim}$ which solves $\min_{\redState\in\RR^{\rDim}}\norm{\gr(\redState)-\sol_0}$.
For instance, there exists no minimizer in the special case 
\begin{equation*}
	\fulldim=2,\quad \rDim=1,\quad \sol_0 = 
	\begin{bmatrix}
		0\\
		0
	\end{bmatrix}
	,\quad g(\redState) \vcentcolon= e^{\redState}
	\begin{bmatrix}
		1\\
		1
	\end{bmatrix}
	.
\end{equation*}
To avoid such problems, in \cite[Rem.~3.1]{LeeC19} the authors propose to add a suitable shift to the ansatz \eqref{eq:nonlinearAnsatz}, which ensures that the \gls{fom} initial value is approximated without any error.
This is possible for any choice of $\redState_0$.

So far, we have only discussed the construction of the \gls{rom}, once suitable subspaces or manifolds have been determined.
On the other hand, the determination of these subspaces or manifolds is not in the main focus of this paper.
For identifying suitable linear subspaces, there are numerous techniques provided in the \gls{mor} literature, for instance, balanced truncation \cite{BenB17,Moo81}, transfer function interpolation \cite{AntBG20,BenF21}, proper orthogonal decomposition \cite{BerHL93,GubV17}, and reduced basis methods \cite{HesRS16,QuaMN16}.
Approaches for determining suitable nonlinear manifolds are, for example, proposed in \cite{BarF22,BlaSU20,LeeC19,Gur08}.

Even though the \glspl{rom} obtained via projection usually have much fewer equations and unknowns than the corresponding \gls{fom}, the evaluation of the \gls{rom} often still scales with the dimension of the \gls{fom}.
The reason for this is that the definitions of the \gls{rom} coefficient matrices and right-hand sides formally involve the corresponding \gls{fom} quantities.
If a linear approximation ansatz is used and the \gls{fom} is linear, then this issue may be usually circumvented by precomputing the \gls{rom} coefficient matrices.
A similar approach is also possible for certain classes of \gls{fom} nonlinearities, see for instance \cite{KraW19} and the references therein.
Besides, so-called hyperreduction methods may be used to further approximate the \gls{rom} in order to render its evaluation fast, cf.~\cite{AllK17,AstWWB08,BarMNP04,CarBF11,ChaS10,FarACC14}.
While structure-preserving hyperreduction methods for \gls{ph} systems are not within the scope of this paper, we refer to \cite{ChaBG16} for a structure-preserving variant of the discrete empirical interpolation method proposed in \cite{ChaS10}.

The literature on structure-preserving model reduction for \gls{ph} systems has so far mainly focused on linear time-invariant \glspl{fom} of the form \eqref{eq:pHEQ}--\eqref{eq:pHEQProperties} with $K=0$, see for instance \cite{MehU23} and the references therein.
In this context, many projection-based schemes are based on a Petrov--Galerkin projection with $\Wr = Q\Vr$.
This is also possible for the case $K\ne 0$ and results in a \gls{rom} of the form
\begin{subequations}
	\label{eq:pHEQROM_LTIMOR}
	\begin{align}
		\label{eq:pHEQROM_LTIMOR_stateEq}
		\redE\dot{\redState}(t) 	&= (\redJ-\redR)\redQ\redState(t)+\redB u(t),\\
		\redy(t)							&= \redB^\top \redQ\redState(t)
	\end{align}
\end{subequations}
with 
\begin{equation}
	\label{eq:pHEQROM_LTIMOR_coeffs}
	\redE \vcentcolon= \Vr^\top Q^\top E\Vr,\quad \redJ \vcentcolon= \Vr^\top Q^\top \left(JQ-K\right)\Vr,\quad \redR \vcentcolon= \Vr^\top Q^\top RQ\Vr,\quad \redQ \vcentcolon= I_{\rDim},\quad \redB \vcentcolon= \Vr^\top Q^\top B.
\end{equation}
Here, $\Vr$ may be chosen in different ways depending on the \gls{mor} method of choice.
While it is straightforward to show that \eqref{eq:pHEQROM_LTIMOR} inherits the \gls{ph} structure of the \gls{fom}, one is often also interested in preserving the algebraic constraints in the case where $E$ is singular, cf.~\Cref{rem:algebraicConstraints}.
Furthermore, we emphasize that in the special case $Q=I_{\fulldim}$, the Petrov--Galerkin scheme reduces to a Galerkin projection.
On the other hand, in the case $E=I_{\fulldim}$, authors often also enforce $\redE$ to equal the identity matrix, for example, by using $\Wr = Q\Vr(\Vr^\top Q \Vr)^{-1}$ instead of $\Wr = Q\Vr$, see for instance \cite{GugPBS12}.
A similar Petrov--Galerkin approach has been employed in \cite{ChaBG16} to obtain a structure-preserving \gls{mor} scheme for nonlinear \gls{ph} systems.

\section{Structure-Preserving Model Reduction}
\label{sec:pHMOR}

In the following we consider structure-preserving \gls{mor} approaches for \gls{ph} systems using different approximation ansatzes.
We start by considering linear time-invariant and linear time-varying ansatzes in \cref{sec:linearPHMOR} and discuss the nonlinear class of ansatzes \eqref{eq:timeDependentFactorizableAnsatz} in \cref{sec:factorizableMOR}.
Finally, in \cref{sec:separableMOR} we consider separable approximation ansatzes, which correspond to a special case of \eqref{eq:timeDependentFactorizableAnsatz} for which it is possible to obtain \glspl{rom} which are at the same time continuously optimal and \gls{ph}.

\subsection{Linear Approximation Ansatz}\label{sec:linearPHMOR}

We start by considering the special case of \eqref{eq:timeDependentFactorizableAnsatz} where $\Vr$ is a constant matrix.
This special case corresponds to a classical linear time-invariant approximation ansatz which is typically employed in the structure-preserving \gls{mor} literature.
When considering linear time-invariant \gls{ph} systems of the form \eqref{eq:pHEQ}--\eqref{eq:pHEQProperties} with invertible $E$ and $Q$, structure-preserving \gls{mor} based on a linear time-invariant ansatz may be achieved via a Petrov--Galerkin projection, cf.~\eqref{eq:pHEQROM_LTIMOR}--\eqref{eq:pHEQROM_LTIMOR_coeffs} in \cref{sec:projectionMOR}.
In the following theorem, it is stated that the corresponding \gls{rom} obtained via the special Petrov--Galerkin projection with $\Wr = Q\Vr$ is optimal in the sense that the residual is minimized with respect to the weighted $E^{-\top} Q^\top$-norm.

\begin{theorem}[Structure-preserving \gls{mor} for \eqref{eq:pHEQ} using a linear time-invariant approximation ansatz]
	\label{thm:linearPH_linearMOR}
	Consider the \gls{ph} system \eqref{eq:pHEQ} with $E,J,R,Q,K$ satisfying \eqref{eq:pHEQProperties} and let $E$ and $Q$ be invertible.
	Furthermore, let $\Vr\in\RR^{\fulldim,\rDim}$ with $\rDim\in\NN_{\le \fulldim}$ be a matrix with full column rank and let \eqref{eq:pHEQROM_LTIMOR}--\eqref{eq:pHEQROM_LTIMOR_coeffs} be a corresponding \gls{rom}.
	Besides, we introduce $\res\colon \RR^\rDim\times \RR^\rDim\times \RR^m\to \RR^\fulldim$ via
	\begin{equation*}
		\res(\eta_1,\eta_2,\eta_3) \vcentcolon= E\Vr\eta_1-\left((J-R)Q-K\right)\Vr\eta_2-B\eta_3,
	\end{equation*}
	i.e., $\res$ is defined such that $\res(\dot{\redState}(t),\redState(t),u(t))$ coincides with the residual at $t\in\timeInterval$.
	Then, the \gls{rom} \eqref{eq:pHEQROM_LTIMOR} is optimal in the sense that any solution $\redState$ of \eqref{eq:pHEQROM_LTIMOR_stateEq} satisfies
	\begin{equation}
		\label{eq:linearPH_linearMOR_resMin}
		\dot{\redState}(t) \in \argmin_{\eta_1\in\RR^\rDim} \frac12\norm{\res(\eta_1,\redState(t),u(t))}_{E^{-\top}Q^\top}^2
	\end{equation}
	for all $t\in \timeInterval$ and for any input signal $u\colon \RR_{\ge 0}\to\RR^m$ which admits a solution of the \gls{rom} state equation \eqref{eq:pHEQROM_LTIMOR_stateEq}.
\end{theorem}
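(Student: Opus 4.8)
The plan is to recognize \eqref{eq:linearPH_linearMOR_resMin} as the normal equations of a weighted linear least-squares problem and to show that these normal equations coincide with the Petrov--Galerkin conditions defining \eqref{eq:pHEQROM_LTIMOR}. First I would record the two facts that make the weighted norm meaningful. By \eqref{eq:pHEQProperties} we have $E^\top Q = Q^\top E$, so the weight $W\vcentcolon= E^{-\top}Q^\top$ satisfies $W^\top = QE^{-1} = E^{-\top}Q^\top = W$ and is thus symmetric; moreover $v^\top Wv = (E^{-1}v)^\top(Q^\top E)(E^{-1}v)$, which is nonnegative since $Q^\top E\ge 0$ and vanishes only for $v=0$ because $E$ and $Q^\top E$ are invertible. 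Hence $W$ is symmetric positive definite and $\norm{\cdot}_W$ is a genuine norm.

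Next, I would observe that for fixed $t\in\timeInterval$ the map $\eta_1\mapsto\frac12\norm{\res(\eta_1,\redState(t),u(t))}_W^2$ is a quadratic function of $\eta_1$, since $\res$ is affine in its first argument with linear part $E\Vr$. Its Hessian is $(E\Vr)^\top W(E\Vr) = \Vr^\top E^\top E^{-\top}Q^\top E\Vr = \Vr^\top Q^\top E\Vr = \redE$, which by the same congruence argument as above is positive definite (here the full column rank of $\Vr$ and the invertibility of $E$ enter). Therefore the quadratic is strictly convex, has a unique global minimizer, and that minimizer is characterized by the first-order optimality condition $(E\Vr)^\top W\,\res(\eta_1,\redState(t),u(t)) = 0$. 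Using the identity $(E\Vr)^\top W = \Vr^\top E^\top E^{-\top}Q^\top = \Vr^\top Q^\top$, this reads $\Vr^\top Q^\top\res(\eta_1,\redState(t),u(t)) = 0$, i.e.\ precisely the Petrov--Galerkin orthogonality condition with test matrix $\Wr = Q\Vr$ that was used to derive \eqref{eq:pHEQROM_LTIMOR}.

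Finally, I would expand this condition using the definition of $\res$ together with \eqref{eq:pHEQROM_LTIMOR_coeffs} and $\redQ = I_\rDim$, which gives
\begin{equation*}
	\Vr^\top Q^\top\res(\eta_1,\redState(t),u(t)) = \redE\,\eta_1 - (\redJ-\redR)\redState(t) - \redB u(t),
\end{equation*}
so the unique minimizer equals $\redE^{-1}\bigl((\redJ-\redR)\redState(t)+\redB u(t)\bigr)$. On the other hand, any solution $\redState$ of \eqref{eq:pHEQROM_LTIMOR_stateEq} satisfies $\redE\dot{\redState}(t) = (\redJ-\redR)\redState(t)+\redB u(t)$ for all $t\in\timeInterval$, whence $\dot{\redState}(t)$ coincides with that minimizer and \eqref{eq:linearPH_linearMOR_resMin} follows. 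The only genuinely delicate point is the first step, namely verifying that $E^{-\top}Q^\top$ is symmetric positive definite so that the weighted norm and the standard weighted least-squares optimality theory are applicable; once this is in place, the argument reduces to the algebraic identity $(E\Vr)^\top E^{-\top}Q^\top = \Vr^\top Q^\top$ and routine bookkeeping with the coefficient definitions in \eqref{eq:pHEQROM_LTIMOR_coeffs}.
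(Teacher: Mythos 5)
Your proposal is correct and follows essentially the same route as the paper's proof: verify that $E^{-\top}Q^\top = E^{-\top}(Q^\top E)E^{-1}$ is symmetric positive definite so the weighted norm is well defined, write the first-order optimality (normal) equations whose Hessian $\Vr^\top Q^\top E\Vr=\redE$ is constant and positive definite, and identify them with the Petrov--Galerkin condition defining \eqref{eq:pHEQROM_LTIMOR_stateEq}. Your write-up is merely more explicit about the intermediate identities (e.g.\ $(E\Vr)^\top E^{-\top}Q^\top=\Vr^\top Q^\top$), which the paper leaves implicit.
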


\begin{proof}
	First, we note that $E^{-\top}Q^\top = E^{-\top} (Q^\top E)E^{-1}$ is symmetric and positive definite since $Q^\top E$ is, and, thus, $\norm{\cdot}_{E^{-\top}Q^\top}$ is indeed a norm.
	Furthermore, the first-order necessary optimality condition reads
	\begin{equation*}
		\Vr^\top Q^\top E\Vr\eta_1 = \Vr^\top Q^\top \left((J-R)Q-K\right)\Vr\redState(t)+\Vr^\top Q^\top Bu(t)
	\end{equation*}
	and this condition is even sufficient since the Hessian $\Vr^\top Q^\top E\Vr$ is constant and positive definite.
	Finally, the comparison of the first-order optimality condition with \eqref{eq:pHEQROM_LTIMOR_stateEq} yields the claim.
\end{proof}

We note that the \gls{ph} structure of the \gls{rom} \eqref{eq:pHEQROM_LTIMOR} is a special case of the one of the \gls{fom} with $\redQ=I_{\fulldim}$, $\redK=0$, and symmetric and positive definite $\redE$.
Consequently, the \gls{rom} is passive, cf.~\cref{sec:pH}, and its state equation \eqref{eq:pHEQROM_LTIMOR_stateEq} with $u=0$ has a stable equilibrium point at the origin, see for instance \cite[Thm.~1]{GilMS18}.

\begin{remark}[Another motivation for using the $E^{-\top}Q^\top$-norm in \eqref{eq:linearPH_linearMOR_resMin}]
	\label{rem:motivationForWeightedResNorm}
	In \Cref{thm:linearPH_linearMOR} it is stated that minimizing the residual in the $E^{-\top}Q^\top$-norm yields a port-Hamiltonian \gls{rom}.
	An alternative motivation for using this specific weighted norm is given by residual-based error bounds.
	For this purpose, we first note that in the case where $E$ and $Q$ are invertible and $E^\top Q$ is symmetric and positive semi-definite, the Hamiltonian coincides up to the prefactor $\frac12$ with the squared $E^\top Q$-norm, i.e., $\ham(\sol) = \frac12\norm{\sol}_{E^\top Q}^2$ for all $\sol\in\RR^{\fulldim}$.
	When choosing this norm for measuring the error $\error \vcentcolon= \sol-\Vr\redState$, we obtain by similar arguments as in the proofs of \cite[Thm.~5.10]{BlaSU20} and \cite[Thm.~4.2]{NagS04} the error bound
	\begin{equation}
		\label{eq:errorBound}
		\norm{\error(t)}_{E^\top Q} \le Me^{\omega t}\left(\norm{\error(0)}_{E^\top Q}+\integral{0}{t}{e^{-\omega s}\norm{\res(\dot\redState(s),\redState(s),u(s))}_{E^{-\top}Q^\top}}s\right)
	\end{equation}
	for all $t\in \timeInterval = [0,\tend]$, where $M\in\RR_{\ge 1}$ and $\omega\in\RR$ satisfy
	\begin{equation*}
		\norm{e^{E^{-1}((J-R)Q-K)t}}_{E^\top Q}\le Me^{\omega t}\quad \text{for all }t\in \timeInterval.
	\end{equation*}
	Consequently, the error bound in \eqref{eq:errorBound} increases with increasing values of the $E^{-\top}Q^\top$-norm of the residual.
\end{remark}

We proceed by considering more general linear approximation ansatzes, where the modes may also depend on time, i.e.,
\begin{equation}
	\label{eq:LTVAnsatz}
	\sol(t) \approx \Vr(t)\redState(t),
\end{equation}
with given $\Vr\colon \RR_{\ge 0}\to \RR^{\fulldim,\rDim}$.
Such ansatzes are, for instance, relevant for transport-dominated systems where the advection speed is known a priori, see for instance \cite{GlaMM98}.

In the following, it is demonstrated that a structure-preserving and residual-minimizing \gls{mor} scheme for linear time-varying port-Hamiltonian \glspl{fom} of the form \eqref{eq:pHLTV} may be obtained in a similar way as in the linear-time invariant case addressed in \Cref{thm:linearPH_linearMOR}.
The stability of the resulting \gls{rom} is discussed in the upcoming \Cref{rem:nonlinearPHDAE_Q_factorizableMOR_stability_specialCases}.

\begin{theorem}[Structure-preserving \gls{mor} for \eqref{eq:pHLTV} using a linear time-varying approximation ansatz]
	\label{thm:LTVPH_timeMOR}
	Consider the \gls{ph} system \eqref{eq:pHLTV} with $E,K,J,R,Q$ satisfying pointwise \eqref{eq:pHLTVconditions} and let $E$ and $Q$ be pointwise invertible.
	Furthermore, let $\Vr\in C^1(\RR_{\ge 0},\RR^{\fulldim,\rDim})$ with $\rDim\in\NN_{\le \fulldim}$ have pointwise full column rank and let
	\begin{subequations}
		\label{eq:LTVPHROM_timeMOR}
		\begin{align}
			\label{eq:LTVPHROM_timeMOR_stateEq}
			\redE(t)\dot{\redState}(t) &= \left((\redJ(t)-\redR(t))\redQ-\redK(t)\right)\redState(t)+\redB(t) u(t),\\
			\redy(t)															&= \redB(t)^\top\redQ\redState(t),
		\end{align}
	\end{subequations}
	for all $t\in\timeInterval$ be a corresponding \gls{rom} with coefficients $\redE,\redK,\redJ,\redR\colon\RR_{\ge 0}\to\RR^{\rDim,\rDim}$, $\redQ\in\RR^{\rDim,\rDim}$, and $\redB\colon\RR_{\ge 0}\to\RR^{\rDim,m}$ defined as
	\begin{equation}
		\label{eq:LTVPHROM_timeMOR_ROMMatrices}
		\begin{aligned}
			\redE &\vcentcolon= \Vr^\top Q^\top E\Vr,\quad 		&\redK &\vcentcolon= \Vr^\top Q^\top\left(K\Vr+E\dot\Vr\right),\quad 	&\redJ 	&\vcentcolon= \Vr^\top Q^\top JQ\Vr,\\
			\redR &\vcentcolon= \Vr^\top Q^\top RQ\Vr,\quad 	&\redQ &\vcentcolon= I_{\rDim},\quad													&\redB 	&\vcentcolon= \Vr^\top Q^\top B.
		\end{aligned}
	\end{equation}
	Besides, we introduce the residual mapping $\res\colon \RR_{\ge 0}\times\RR^\rDim\times \RR^\rDim\times \RR^m\to \RR^\fulldim$ via
	\begin{align*}
		&\res(t,\eta_1,\eta_2,\eta_3)\\
		&\vcentcolon= E(t)\Vr(t)\eta_1+\left(E(t)\dot\Vr(t)+(K(t)-(J(t)-R(t))Q(t))\Vr(t)\right)\eta_2-B(t)\eta_3.
	\end{align*}
	Then, the following assertions hold.
	\begin{enumerate}[(i)]
		\item \label{itm:LTVPHROM_timeMOR_structurePreservation}The \gls{rom} coefficients satisfy pointwise
			\begin{equation*}
				\begin{aligned}
					\redJ = -\redJ^\top,\quad \redR = \redR^\top \geq 0,\quad \redE^\top \redQ=\redQ^\top \redE> 0,\quad \frac{\diff }{\diff t}(\redQ^\top \redE) = \redQ^\top \redK+\redK^\top \redQ,
				\end{aligned}
			\end{equation*}
			i.e., the \gls{rom} \eqref{eq:LTVPHROM_timeMOR} inherits the \gls{ph} structure from the \gls{fom} \eqref{eq:pHLTV}.
		\item The \gls{rom} \eqref{eq:LTVPHROM_timeMOR} is optimal in the sense that any solution $\redState$ of \eqref{eq:LTVPHROM_timeMOR_stateEq} satisfies
			\begin{equation}
				\label{eq:LTVPHROM_timeMOR_optimality}
				\dot{\redState}(t) \in \argmin_{\eta_1\in\RR^{\rDim}} \frac12\norm{\res(t,\eta_1,\redState(t),u(t))}_{E(t)^{-\top}Q(t)^\top}^2
			\end{equation}
			for all $t\in \timeInterval$ and for any input signal $u\colon \RR_{\ge 0}\to\RR^m$ which admits a solution of the \gls{rom} state equation \eqref{eq:LTVPHROM_timeMOR_stateEq}.
	\end{enumerate}
\end{theorem}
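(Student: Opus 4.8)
The plan is to prove the two assertions separately: part~\eqref{itm:LTVPHROM_timeMOR_structurePreservation} is a direct pointwise verification of the four conditions \eqref{eq:pHLTVconditions} for the reduced coefficients, and part~(ii) follows the same strategy as the proof of \Cref{thm:linearPH_linearMOR}. For part~\eqref{itm:LTVPHROM_timeMOR_structurePreservation} I would check the conditions one at a time using the definitions \eqref{eq:LTVPHROM_timeMOR_ROMMatrices}. The skew-symmetry $\redJ=-\redJ^\top$ is immediate from $\redJ=\Vr^\top Q^\top JQ\Vr$ together with $J=-J^\top$, and $\redR=\redR^\top\ge 0$ follows from $R=R^\top\ge 0$ by writing $w^\top\redR w=(Q\Vr w)^\top R(Q\Vr w)\ge 0$ for $w\in\RR^\rDim$. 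For $\redE^\top\redQ=\redQ^\top\redE>0$, note that $\redQ=I_\rDim$ reduces the identity to $\redE=\redE^\top$ and the definiteness to $\redE>0$; symmetry uses $E^\top Q=Q^\top E$, whereas strict definiteness uses that $E$ and $Q$ are pointwise invertible, so that $E^\top Q=Q^\top E\ge 0$ is in fact pointwise positive definite, and hence $\redE=\Vr^\top(Q^\top E)\Vr>0$ because $\Vr$ has pointwise full column rank. The remaining identity $\frac{\diff}{\diff t}(\redQ^\top\redE)=\redQ^\top\redK+\redK^\top\redQ$, i.e.\ $\frac{\diff}{\diff t}\redE=\redK+\redK^\top$, is the one computation requiring attention: differentiating $\redE=\Vr^\top Q^\top E\Vr$ by the product rule and inserting $\frac{\diff}{\diff t}(Q^\top E)=Q^\top K+K^\top Q$ from \eqref{eq:pHLTVconditions} yields $\dot\Vr^\top Q^\top E\Vr+\Vr^\top(Q^\top K+K^\top Q)\Vr+\Vr^\top Q^\top E\dot\Vr$, and using $E^\top Q=Q^\top E$ one recognizes this expression as $\redK+\redK^\top$ in view of $\redK=\Vr^\top Q^\top(K\Vr+E\dot\Vr)$; the term $E\dot\Vr$ inside $\redK$ is exactly what absorbs the derivative of $\Vr$.

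For part~(ii), I would first verify that the weight $E(t)^{-\top}Q(t)^\top=E(t)^{-\top}(Q(t)^\top E(t))E(t)^{-1}$ is symmetric and positive definite, so that $\norm{\cdot}_{E(t)^{-\top}Q(t)^\top}$ is a genuine norm; this again uses the invertibility of $E$ and $Q$ together with $Q^\top E\ge 0$. Then fix $t\in\timeInterval$ and abbreviate $W\vcentcolon=E(t)^{-\top}Q(t)^\top$. Writing the residual as $\res(t,\eta_1,\redState(t),u(t))=E(t)\Vr(t)\eta_1+c$ with $c$ collecting the $\redState(t)$- and $u(t)$-dependent terms, the objective in \eqref{eq:LTVPHROM_timeMOR_optimality} is the quadratic $\eta_1\mapsto\tfrac12(E(t)\Vr(t)\eta_1+c)^\top W(E(t)\Vr(t)\eta_1+c)$, whose gradient vanishes if and only if $\Vr(t)^\top E(t)^\top W E(t)\Vr(t)\,\eta_1=-\Vr(t)^\top E(t)^\top W c$. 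The key simplifications $E(t)^\top W E(t)=Q(t)^\top E(t)$ and $E(t)^\top W=Q(t)^\top$ turn this normal equation, after substituting \eqref{eq:LTVPHROM_timeMOR_ROMMatrices}, into $\redE(t)\eta_1=((\redJ(t)-\redR(t))\redQ-\redK(t))\redState(t)+\redB(t)u(t)$, which is precisely the \gls{rom} state equation \eqref{eq:LTVPHROM_timeMOR_stateEq} at time $t$ with $\eta_1=\dot{\redState}(t)$. Since the Hessian $\Vr(t)^\top E(t)^\top W E(t)\Vr(t)=\redE(t)$ is positive definite by part~\eqref{itm:LTVPHROM_timeMOR_structurePreservation}, this first-order condition is also sufficient, which yields \eqref{eq:LTVPHROM_timeMOR_optimality} and completes the proof.

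I do not anticipate a genuine obstacle: the argument is essentially a time-varying bookkeeping of the linear time-invariant computation behind \Cref{thm:linearPH_linearMOR}. The two points deserving care are the product-rule identity in part~\eqref{itm:LTVPHROM_timeMOR_structurePreservation}, where the term $E\dot\Vr$ in the definition of $\redK$ is tailored precisely to cancel the contribution of $\dot\Vr$ coming from $\frac{\diff}{\diff t}\redE$, and the algebraic simplifications $E(t)^\top W E(t)=Q(t)^\top E(t)$ and $E(t)^\top W=Q(t)^\top$ in part~(ii), which are what make the normal equations collapse exactly onto the \gls{rom} state equation rather than onto a perturbed version of it.
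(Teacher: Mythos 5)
Your proposal is correct and follows essentially the same route as the paper: part (i) by pointwise verification, with the product-rule computation of $\frac{\diff}{\diff t}(\Vr^\top Q^\top E\Vr)$ combined with $\frac{\diff}{\diff t}(Q^\top E)=Q^\top K+K^\top Q$ and $E^\top Q=Q^\top E$ to recover $\redK+\redK^\top$, and part (ii) by the same weighted least-squares normal-equation argument as in \Cref{thm:linearPH_linearMOR}, with Hessian $\Vr^\top Q^\top E\Vr=\redE>0$ making the first-order condition sufficient. Your added explicit simplifications $E^\top W E=Q^\top E$ and $E^\top W=Q^\top$ are just a spelled-out version of the computation the paper leaves implicit.
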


\begin{proof}
	\begin{enumerate}[(i)]
		\item The pointwise symmetry and definiteness properties of $\redJ$, $\redR$, and $\redQ^\top\redE$ follow from the corresponding properties of $J$, $R$, and $Q^\top E$, respectively.
			Furthermore, using the last equation in \eqref{eq:pHLTVconditions}, we obtain
			\begin{align*}
				\frac{\diff }{\diff t}(\redQ^\top \redE) &= \dot\Vr^\top Q^\top E\Vr+\Vr^\top\frac{\diff }{\diff t}(Q^\top E) \Vr+\Vr^\top Q^\top E\dot\Vr\\
				&= \dot\Vr^\top Q^\top E\Vr+\Vr^\top\left(Q^\top K+K^\top Q\right)\Vr+\Vr^\top Q^\top E\dot\Vr = \redQ^\top\redK+\redK^\top \redQ.
			\end{align*}
		\item The proof follows along the lines of the proof of \Cref{thm:linearPH_linearMOR}, where the major difference is that the first-order necessary optimality condition of the minimization problem \eqref{eq:LTVPHROM_timeMOR_optimality} for fixed $t\in\timeInterval$ is given by
			\begin{align*}
				&\Vr(t)^\top Q(t)^\top E(t)\Vr(t)\eta_1+\Vr(t)^\top Q(t)^\top \left(E(t)\dot\Vr(t)+K(t)\Vr(t)\right)\redState(t)\\
				&= \Vr(t)^\top Q(t)^\top(J(t)-R(t))Q(t)\Vr(t)\redState(t)+\Vr(t)^\top Q(t)^\top B(t)u(t).
			\end{align*}
			and the Hessian is $\Vr(t)^\top Q(t)^\top E(t)\Vr(t)$.
	\end{enumerate}
\end{proof}

\subsection{Factorizable Approximation Ansatz}
\label{sec:factorizableMOR}

As mentioned in the introduction, nonlinear \gls{mor} ansatzes have received more and more attention in recent years.
In the following, we consider structure-preserving \gls{mor} schemes based on ansatzes of the form \eqref{eq:timeDependentFactorizableAnsatz}, which correspond to approximating the \gls{fom} state by a linear combination of ansatz vectors which may depend not only on time, but also on the \gls{rom} state.
An example for such an ansatz is given by polynomial ansatzes with vanishing constant term, i.e.,
\begin{equation*}
	\sol_k(t) \approx \sum_{i_1=0}^q\sum_{i_2=0}^q\cdots\sum_{i_{\rDim}=0}^q c_{k,i_1,i_2,\ldots,i_{\rDim}}\prod_{j=1}^{\rDim}\redState_j(t)^{i_j}\quad \text{for }k=1,\ldots,\fulldim
\end{equation*}
with $q\in\NN$, $c_{k,i_1,i_2,\ldots,i_{\rDim}}\in\RR$ for $i_j=0,\ldots,q$, $j=1,\ldots,\rDim$ with $c_{k,0,0,\ldots,0}=0$ for $k=1,\ldots,\fulldim$.
For instance, quadratic ansatzes have been recently investigated in \cite{BarF22}.
Another subclass of \eqref{eq:timeDependentFactorizableAnsatz}, which is especially relevant in the context of \gls{mor} for transport-dominated systems, is addressed in \cref{sec:separableMOR}.

Unfortunately, it appears to be challenging to obtain a structure-preserving \gls{mor} scheme which is at the same time residual-minimizing when using a general approximation ansatz of the form \eqref{eq:timeDependentFactorizableAnsatz}.
Instead, we focus in the following only on the structure preservation, whereas we consider in \cref{sec:separableMOR} a special case of \eqref{eq:timeDependentFactorizableAnsatz} which allows to obtain continuously optimal port-Hamiltonian \glspl{rom}.
Similarly as in the previous subsection, the structure preservation is achieved by enforcing the residual to be orthogonal to the column space of $Q\Vr$.
In the following, this is demonstrated for nonlinear time-varying port-Hamiltonian \glspl{fom} of the form \eqref{eq:nonlinearPHDAE_Q}--\eqref{eq:nonlinearPHDAEProperties_Q_new}.

\begin{theorem}[Structure-preserving \gls{mor} for \eqref{eq:nonlinearPHDAE_Q} using a factorizable approximation ansatz]
	\label{thm:nonlinearPHDAE_Q_factorizableMOR}
	Consider the \gls{ph} system \eqref{eq:nonlinearPHDAE_Q} with $E,\rph,J,R,Q$ and the associated Hamiltonian $\ham$ satisfying \eqref{eq:nonlinearPHDAEProperties_Q_new}.
	Furthermore, let $\Vr\colon \RR_{\ge 0}\times \RR^{\rDim}\rightarrow\RR^{\fulldim,\rDim}$ with $\rDim\in\NN_{\le \fulldim}$ be continuously differentiable and consider the corresponding \gls{rom} of the form \eqref{eq:nonlinearPHDAE_Q_factorizableMOR} with coefficient functions defined via
	\begin{equation}
		\label{eq:nonlinearPHDAE_Q_factorizableMOR_ROMMatrices}
		\begin{aligned}
			\redE(t,\redState) 		&\vcentcolon= \Vr(t,\redState)^\top Q(t,\Vr(t,\redState)\redState)^\top E(t,\Vr(t,\redState)\redState)\left(\Vr(t,\redState)+\widehat{\Vr}(t,\redState)\redState\right),\\ 
			\redJ(t,\redState) 		&\vcentcolon= \Vr(t,\redState)^\top Q(t,\Vr(t,\redState)\redState)^\top J(t,\Vr(t,\redState)\redState)Q(t,\Vr(t,\redState)\redState)\Vr(t,\redState),\\
			\redR(t,\redState) 		&\vcentcolon= \Vr(t,\redState)^\top Q(t,\Vr(t,\redState)\redState)^\top R(t,\Vr(t,\redState)\redState)Q(t,\Vr(t,\redState)\redState)\Vr(t,\redState),\\
			\redrph(t,\redState) 	&\vcentcolon= \Vr(t,\redState)^\top Q(t,\Vr(t,\redState)\redState)^\top \left(\rph(t,\Vr(t,\redState)\redState)+E(t,\Vr(t,\redState)\redState)\partial_t\Vr(t,\redState)\redState\right),\\ 
			\redQ(t,\redState) &\vcentcolon= I_{\rDim},\\ 
			\redB(t,\redState) &\vcentcolon= \Vr(t,\redState)^\top Q(t,\Vr(t,\redState)\redState)^\top B(t,\Vr(t,\redState)\redState).
		\end{aligned}
	\end{equation}
	Here, $\widehat\Vr\colon \RR_{\ge 0}\times\RR^\rDim\to\Hom(\RR^\rDim,\RR^{\fulldim,\rDim})$ is defined via
	\begin{equation}
		\label{eq:widehatVr_factorizableMOR}
		\widehat\Vr(t,\eta_1)(\eta_2)\eta_3 \vcentcolon= \partial_{\redState}\Vr(t,\eta_1)(\eta_3)\eta_2
	\end{equation}
	for all $(t,\eta_1,\eta_2,\eta_3)\in\RR_{\ge 0}\times\RR^{\rDim}\times\RR^{\rDim}\times\RR^{\rDim}$.
	Moreover, we consider the \gls{rom} Hamiltonian $\redHam\colon\RR_{\ge 0}\times\RR^{\rDim}\to\RR$ as in \eqref{eq:redHam}.
	Then, $\redHam$ is continuously differentiable and the \gls{rom} coefficients satisfy 
	\begin{equation}
		\label{eq:nonlinearPHDAEROMProperties}
		\begin{aligned}
			\redJ(t,\redState) &= -\redJ(t,\redState)^\top, &\nabla_{\redState}\redHam(t,\redState) &= \redE(t,\redState)^\top \redQ(t,\redState)\redState\\
			\redR(t,\redState) &= \redR(t,\redState)^\top \ge 0,\quad &\partial_t \redHam(t,\redState) &= \redrph(t,\redState)^\top \redQ(t,\redState)\redState
		\end{aligned}
	\end{equation}
	for all $(t,\redState)\in\RR_{\ge 0}\times \RR^{\rDim}$, i.e., the \gls{rom} inherits the \gls{ph} structure from the \gls{fom} \eqref{eq:nonlinearPHDAE_Q}.
\end{theorem}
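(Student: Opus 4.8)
The plan is to recognise the coefficients in \eqref{eq:nonlinearPHDAE_Q_factorizableMOR_ROMMatrices} as those produced by substituting the ansatz $\sol(t)=\Vr(t,\redState(t))\redState(t)$ into \eqref{eq:nonlinearPHDAE_Q} and enforcing orthogonality of the residual onto $\colsp\bigl(Q(t,\sol)\Vr(t,\redState)\bigr)$, and then to verify the four identities in \eqref{eq:nonlinearPHDAEROMProperties} one by one, in each case reducing the \gls{rom} identity to the corresponding \gls{fom} identity in \eqref{eq:nonlinearPHDAEProperties_Q_new} via the chain rule.

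First I would differentiate the ansatz. Writing $\psi(t,\redState)\vcentcolon=\Vr(t,\redState)\redState$, the product rule gives $\partial_{\redState}\psi(t,\redState)(h)=\Vr(t,\redState)h+\partial_{\redState}\Vr(t,\redState)(h)\redState$ for $h\in\RR^{\rDim}$, and by the definition \eqref{eq:widehatVr_factorizableMOR} of $\widehat\Vr$ the last summand equals $\widehat\Vr(t,\redState)(\redState)h$; hence, as linear maps from $\RR^{\rDim}$ to $\RR^{\fulldim}$,
\begin{equation*}
	\partial_{\redState}\psi(t,\redState)=\Vr(t,\redState)+\widehat\Vr(t,\redState)\redState,\qquad \partial_t\psi(t,\redState)=\partial_t\Vr(t,\redState)\redState .
\end{equation*}
Substituting $\sol=\psi(t,\redState)$ and $\dot\sol=\bigl(\Vr(t,\redState)+\widehat\Vr(t,\redState)\redState\bigr)\dot\redState+\partial_t\Vr(t,\redState)\redState$ into \eqref{eq:nonlinearPHDAE_Q_stateEq}, using $Q(t,\sol)\sol=Q(t,\sol)\Vr(t,\redState)\redState$, and multiplying from the left by $\Vr(t,\redState)^\top Q(t,\sol)^\top$ reproduces \eqref{eq:nonlinearPHDAE_Q_factorizableMOR_stateEq} with precisely the coefficients \eqref{eq:nonlinearPHDAE_Q_factorizableMOR_ROMMatrices}. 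This also shows that $\redE,\redJ,\redR,\redrph,\redB$ are continuous, since the \gls{fom} coefficients are continuous and $\Vr\in C^1$, so that $\partial_t\Vr$ and $\widehat\Vr$ are continuous; and $\redHam=\ham(\cdot,\psi(\cdot,\cdot))$ is continuously differentiable, being a composition of the $C^1$ maps $\ham$ and $\psi$.

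Next I would check the algebraic conditions. The skew-symmetry $\redJ=-\redJ^\top$ is immediate from $J=-J^\top$ because $\redJ=(Q\Vr)^\top J(Q\Vr)$ with $Q,J$ evaluated at $(t,\psi(t,\redState))$ and $\Vr$ at $(t,\redState)$; likewise $\redR=(Q\Vr)^\top R(Q\Vr)$ inherits symmetry and positive semidefiniteness from $R=R^\top\ge 0$. For the gradient identity I would differentiate $\redHam(t,\redState)=\ham(t,\psi(t,\redState))$ with respect to $\redState$ and insert $\nabla_{\sol}\ham=E^\top Q\sol$, which gives
\begin{equation*}
	\nabla_{\redState}\redHam(t,\redState)=\bigl(\partial_{\redState}\psi(t,\redState)\bigr)^\top E(t,\psi)^\top Q(t,\psi)\psi=\bigl(\Vr+\widehat\Vr\redState\bigr)^\top E(t,\psi)^\top Q(t,\psi)\Vr\redState ,
\end{equation*}
which is exactly $\redE(t,\redState)^\top\redState=\redE(t,\redState)^\top\redQ(t,\redState)\redState$ since $\redQ=I_{\rDim}$.

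Finally I would treat the time-derivative identity, the only place where a small manipulation beyond unwinding definitions is needed. Differentiating $\redHam(t,\redState)=\ham(t,\psi(t,\redState))$ in $t$ and using both $\partial_t\ham=\rph^\top Q\sol$ and $\nabla_{\sol}\ham=E^\top Q\sol$ yields
\begin{equation*}
	\partial_t\redHam(t,\redState)=\rph(t,\psi)^\top Q(t,\psi)\Vr\redState+\redState^\top\Vr^\top Q(t,\psi)^\top E(t,\psi)\,\partial_t\Vr(t,\redState)\,\redState ,
\end{equation*}
whereas expanding $\redrph(t,\redState)^\top\redState$ from \eqref{eq:nonlinearPHDAE_Q_factorizableMOR_ROMMatrices} gives $\rph(t,\psi)^\top Q(t,\psi)\Vr\redState+\redState^\top(\partial_t\Vr)^\top E(t,\psi)^\top Q(t,\psi)\Vr\,\redState$. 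The two trailing terms are transposes of each other and, being scalars, coincide, so $\partial_t\redHam(t,\redState)=\redrph(t,\redState)^\top\redState=\redrph(t,\redState)^\top\redQ(t,\redState)\redState$; together with the three identities above this proves \eqref{eq:nonlinearPHDAEROMProperties}. I expect the bookkeeping in the first step — correctly producing the $\widehat\Vr$ term from the product rule and tracking the arguments at which $E$, $Q$, $\rph$, $B$ are evaluated — to be the main point requiring care, with everything else following directly from the \gls{fom} relations \eqref{eq:nonlinearPHDAEProperties_Q_new} and the chain rule.
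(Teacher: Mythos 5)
Your proposal is correct and follows essentially the same route as the paper's proof: the skew-symmetry and semidefiniteness of $\redJ,\redR$ are inherited directly from $J,R$, and the two Hamiltonian identities in \eqref{eq:nonlinearPHDAEROMProperties} are obtained by the chain rule applied to $\redHam(t,\redState)=\ham(t,\Vr(t,\redState)\redState)$ together with \eqref{eq:nonlinearPHDAEProperties_Q_new}, with the same scalar-transpose observation that the paper uses implicitly in the $\partial_t\redHam$ computation. The preliminary residual-projection derivation of the coefficients is extra context not needed for the claimed conclusion, but it is harmless and consistent with how the paper motivates \eqref{eq:nonlinearPHDAE_Q_factorizableMOR_ROMMatrices}.
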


\begin{proof}
	The properties of $\redJ$ and $\redR$ follow from the corresponding properties of $J$ and $R$, respectively.
	Furthermore, $\redHam$ is continuously differentiable due to the continuous differentiability of $\ham$ and $\Vr$.
	Moreover, the relations concerning the partial derivatives of $\redHam$ follow from
	\begin{align*}
		&\partial_t \redHam(t,\redState) 	= \partial_t\ham(t,\Vr(t,\redState)\redState)+\partial_{\sol}\ham(t,\Vr(t,\redState)\redState)\partial_t\Vr(t,\redState)\redState\\
		&= \rph(t,\Vr(t,\redState)\redState)^\top Q(t,\Vr(t,\redState)\redState)\Vr(t,\redState)\redState+\redState^\top\Vr(t,\redState)^\top Q(t,\Vr(t,\redState)\redState)^\top E(t,\Vr(t,\redState)\redState)\partial_t\Vr(t,\redState)\redState\\
		&= \redrph(t,\redState)^\top\redQ(t,\redState)\redState
	\end{align*}
	and
	\begin{align*}
		\partial_{\redState} \redHam(t,\redState)\zeta &= \partial_{\sol}\ham(t,\Vr(t,\redState)\redState)\left(\Vr(t,\redState)\zeta+\partial_{\redState}\Vr(t,\redState)(\zeta)\redState\right)\\
		&= \redState^\top\Vr(t,\redState)^\top Q(t,\Vr(t,\redState)\redState)^\top E(t,\Vr(t,\redState)\redState)\left(\Vr(t,\redState)+\widehat{\Vr}(t,\redState)\redState\right)\zeta\\
		&= \left(\redE(t,\redState)^\top\redQ(t,\redState)\redState\right)^\top\zeta
	\end{align*}
	for all $(t,\redState,\zeta)\in\RR_{\ge 0}\times\RR^{\rDim}\times\RR^{\rDim}$.
\end{proof}

While the subject of \Cref{thm:nonlinearPHDAE_Q_factorizableMOR} is the \gls{ph} structure of the \gls{rom} given by \eqref{eq:nonlinearPHDAE_Q_factorizableMOR} and \eqref{eq:nonlinearPHDAE_Q_factorizableMOR_ROMMatrices}, this theorem does not address the stability of the state equation \eqref{eq:nonlinearPHDAE_Q_factorizableMOR_stateEq}.
Based on \Cref{thm:globalExistenceAndUniqueness_pH}, the following corollary provides sufficient conditions for the \gls{rom} state equation to have a uniformly stable equilibrium point at the origin.

\begin{corollary}[Stability of the \gls{rom} from \Cref{thm:nonlinearPHDAE_Q_factorizableMOR}]
	\label{cor:nonlinearPHDAE_Q_factorizableMOR_stability}
	Let the assumptions of \Cref{thm:nonlinearPHDAE_Q_factorizableMOR} be satisfied and let additionally $E$, $J$, $R$, $Q$, and $\rph$ be continuously differentiable and $\Vr$ twice continuously differentiable.
	Furthermore, let the \gls{fom} Hamiltonian $\ham$ satisfy condition~\eqref{itm:normEquivalence} in \Cref{def:Lyapunov} with $V=\ham$ and let there exist constants $\hat{c}_1,\hat{c}_2\in\RR_{>0}$ such that the singular values of $\Vr$ satisfy
	\begin{equation}
		\label{eq:boundForSingularValsOfVr_timeDependentFactorizableMOR}
		\maxSingularVal(\Vr(t,\redState)) \le \hat{c}_1\quad \text{and}\quad \minSingularVal(\Vr(t,\redState)) \ge \hat{c}_2\quad \text{for all }(t,\redState)\in\RR_{\ge 0}\times\RR^{\rDim}.
	\end{equation}
	Besides, let $E$, $Q$, and $\Vr$ be such that $\redE$ as defined in \eqref{eq:nonlinearPHDAE_Q_factorizableMOR_ROMMatrices} is pointwise invertible and let $0\in\RR^{\fulldim}$ be an equilibrium point of the \gls{fom} state equation \eqref{eq:nonlinearPHDAE_Q_stateEq} with $u=0$.
	Then, the \gls{rom} state equation \eqref{eq:nonlinearPHDAE_Q_factorizableMOR_stateEq} with $u=0$ and coefficients as in \eqref{eq:nonlinearPHDAE_Q_factorizableMOR_ROMMatrices} has a uniformly stable equilibrium point at $0\in\RR^{\rDim}$.
\end{corollary}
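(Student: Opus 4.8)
The plan is to recognize the unforced reduced state equation \eqref{eq:nonlinearPHDAE_Q_factorizableMOR_stateEq} as a particular instance of \eqref{eq:nonlinearPHDAE_stateEq} — with $\zph(t,\redState)\vcentcolon=\redQ(t,\redState)\redState=\redState$ (recall $\redQ\equiv I_{\rDim}$), right-hand-side coefficients $\redE,\redJ,\redR,\redrph$ from \eqref{eq:nonlinearPHDAE_Q_factorizableMOR_ROMMatrices}, and Hamiltonian $\redHam$ — and then to invoke \Cref{thm:globalExistenceAndUniqueness_pH}\eqref{itm:pH_stability}. It therefore suffices to verify, for this reduced system, the hypotheses of that theorem: the regularity assumptions on the coefficients; the port-Hamiltonian identities \eqref{eq:nonlinearPHDAEProperties_new} with $\redHam$ in the role of $\ham$; the norm-equivalence condition \eqref{itm:normEquivalence} of \Cref{def:Lyapunov} for $\redHam$; and the equilibrium condition $\redrph(t,0)=(\redJ(t,0)-\redR(t,0))\zph(t,0)$ for all $t\in\RR_{\ge0}$.

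For the regularity, the reduced map $\zph=\mathrm{id}$ is smooth and $\redE$ is pointwise invertible by assumption, so the point is that $\redJ,\redR,\redrph,\redE$ are continuously differentiable. This is where the strengthened smoothness assumptions enter: $\redJ,\redR,\redrph$ are assembled from $Q,J,R,\rph\in C^1$ and from $\Vr,\partial_t\Vr$ by products and compositions, hence already $C^1$ once $\Vr\in C^1$, whereas $\redE$ additionally carries the summand $\widehat{\Vr}(t,\redState)\redState$, which by \eqref{eq:widehatVr_factorizableMOR} is built from the first $\redState$-derivative of $\Vr$; thus $\redE\in C^1$ requires precisely $\Vr\in C^2$. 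The port-Hamiltonian identities for $\redHam$, together with $\redHam$ being continuously differentiable, are exactly \eqref{eq:nonlinearPHDAEROMProperties}, which for $\zph(t,\redState)=\redState$ coincides with \eqref{eq:nonlinearPHDAEProperties_new}, and these are furnished by \Cref{thm:nonlinearPHDAE_Q_factorizableMOR}.

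It remains to check the norm equivalence and the equilibrium condition. For the former, write $\redHam(t,\redState)=\ham(t,\Vr(t,\redState)\redState)$; condition \eqref{itm:normEquivalence} for $\ham$ yields constants $c_2,c_3\in\RR_{>0}$ with $c_2\norm{\Vr(t,\redState)\redState}^2\le\redHam(t,\redState)\le c_3\norm{\Vr(t,\redState)\redState}^2$, and bounding $\norm{\Vr(t,\redState)\redState}$ from below by $\minSingularVal(\Vr(t,\redState))\norm{\redState}$ and from above by $\maxSingularVal(\Vr(t,\redState))\norm{\redState}$ and then using \eqref{eq:boundForSingularValsOfVr_timeDependentFactorizableMOR} gives $c_2\hat{c}_2^2\norm{\redState}^2\le\redHam(t,\redState)\le c_3\hat{c}_1^2\norm{\redState}^2$, i.e.\ \eqref{itm:normEquivalence} for $\redHam$ with constants $c_2\hat{c}_2^2$ and $c_3\hat{c}_1^2$. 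For the equilibrium condition, since $\zph(t,0)=0$ it suffices to show $\redrph(t,0)=0$; evaluating the formula for $\redrph$ in \eqref{eq:nonlinearPHDAE_Q_factorizableMOR_ROMMatrices} at $\redState=0$ annihilates the $E\,\partial_t\Vr\,\redState$ term and leaves $\redrph(t,0)=\Vr(t,0)^\top Q(t,0)^\top\rph(t,0)$, while the hypothesis that $0\in\RR^{\fulldim}$ is an equilibrium of \eqref{eq:nonlinearPHDAE_Q_stateEq} with $u=0$ means that $\sol\equiv0$ solves that equation, forcing $\rph(t,0)=(J(t,0)-R(t,0))Q(t,0)\cdot0=0$ and hence $\redrph(t,0)=0$. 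With all hypotheses of \Cref{thm:globalExistenceAndUniqueness_pH}\eqref{itm:pH_stability} verified, the corollary follows. The only genuinely non-mechanical step is the regularity bookkeeping of the second paragraph: one must confirm that $\Vr\in C^2$ is exactly what is needed to propagate $C^1$ regularity through the $\widehat{\Vr}$-term of $\redE$, the remaining coefficients being $C^1$ under the weaker hypotheses already present in \Cref{thm:nonlinearPHDAE_Q_factorizableMOR}.
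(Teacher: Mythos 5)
Your proof is correct and follows essentially the same route as the paper: check the hypotheses of \Cref{thm:globalExistenceAndUniqueness_pH} for the reduced system (regularity of the reduced coefficients, norm equivalence of $\redHam$ via the singular value bounds on $\Vr$, and $\redrph(t,0)=0$ deduced from $\rph(t,0)=0$ at the FOM equilibrium) and then invoke that theorem. The only minor inaccuracy is your claim that $\redrph$ is $C^1$ already when $\Vr\in C^1$ — since $\redrph$ contains $\partial_t\Vr$, its continuous differentiability also relies on $\Vr\in C^2$ — but this is harmless because the corollary assumes exactly that.
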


\begin{proof}
	First, we note that the differentiability assumptions on the \gls{fom} coefficient functions and on $\Vr$ imply that $\redE$, $\redJ$, $\redR$, and $\redrph$ are continuously differentiable.
	Furthermore, since $\ham$ satisfies condition~\eqref{itm:normEquivalence} in \Cref{def:Lyapunov} with constants $c_2,c_3\in\RR_{>0}$ and since the singular values of $\Vr$ are bounded as in \eqref{eq:boundForSingularValsOfVr_timeDependentFactorizableMOR}, we infer that also the \gls{rom} Hamiltonian $\redHam$ satisfies condition~\eqref{itm:normEquivalence} in \Cref{def:Lyapunov}, which follows from the calculation
	\begin{equation*}
		\redHam(t,\redState) = \ham(t,\Vr(t,\redState)\redState) \le c_3\norm{\Vr(t,\redState)\redState}^2 \le c_3\maxSingularVal(\Vr(t,\redState))^2\norm{\redState}^2 \le c_3\hat{c}_1^2\norm{\redState}^2
	\end{equation*}
	and
	\begin{equation*}
		\redHam(t,\redState) \ge c_2\norm{\Vr(t,\redState)\redState}^2 \ge c_2\minSingularVal(\Vr(t,\redState))^2\norm{\redState}^2 \ge c_2\hat{c}_2^2\norm{\redState}^2
	\end{equation*}
	for all $(t,\redState)\in\RR_{\ge 0}\times\RR^{\rDim}$.
	In addition, the fact that $0\in\RR^{\fulldim}$ is an equilibrium point of \eqref{eq:nonlinearPHDAE_Q_stateEq} with $u=0$ implies $\rph(t,0)=0$ for all $t\in\RR_{\ge 0}$.
	Consequently, we also have $\redrph(t,0)=0$ for all $t\in\RR_{\ge 0}$ and the claim follows from \Cref{thm:globalExistenceAndUniqueness_pH}.
\end{proof}

In general, the assumption in \Cref{cor:nonlinearPHDAE_Q_factorizableMOR_stability} that $\redE$ is pointwise invertible may be difficult to verify.
In the special case where $E^\top Q$ is pointwise symmetric and positive definite and $\Vr$ is constant with respect to its second argument and has pointwise full column rank, $\redE$ is pointwise symmetric and positive definite.
While this special case corresponds to a linear approximation ansatz as considered in the previous subsection, the situation is more involved in the nonlinear case, where $\partial_{\redState}\Vr$ does not vanish.
However, even in this case, we can at least infer that $\redE(t,0)$ is invertible for all $t\in\RR_{\ge 0}$ as long as $E^\top Q$ is pointwise symmetric and positive definite and $\Vr$ has pointwise full column rank, since the term $\widehat{\Vr}(t,\redState)\redState$ vanishes for $\redState=0$.
Thus, since $\redE$ is continuous and the set of invertible matrices is open, cf.~\cite[Prop.~1.2.1]{Far08}, in this case $\redE$ is at least invertible in a neighborhood of the equilibrium point $0$.

\begin{remark}[Special cases of \Cref{cor:nonlinearPHDAE_Q_factorizableMOR_stability}]
	\label{rem:nonlinearPHDAE_Q_factorizableMOR_stability_specialCases}
	In the special case where the \gls{fom} is a linear \gls{ph} system of the form \eqref{eq:pHEQ}--\eqref{eq:pHEQProperties} or \eqref{eq:pHLTV}--\eqref{eq:pHLTVconditions}, the assumption in \Cref{cor:nonlinearPHDAE_Q_factorizableMOR_stability} that $0$ is an equilibrium point of the \gls{fom} state equation with $u=0$ is automatically satisfied, cf.~\cref{sec:pH}.
	Furthermore, in the linear time-invariant case \eqref{eq:pHEQ}--\eqref{eq:pHEQProperties}, the assumption that the Hamiltonian $\ham$ satisfies condition~\eqref{itm:normEquivalence} in \Cref{def:Lyapunov} is equivalent to $E$ and $Q$ being invertible.
	Similarly, also in the linear time-varying case \eqref{eq:pHLTV}--\eqref{eq:pHLTVconditions}, this assumption on the Hamiltonian simplifies as mentioned at the end of \cref{sec:pH}.
	Finally, we remark that in the special case of a linear time-invariant approximation ansatz, i.e., $\Vr\in\RR^{\fulldim,\rDim}$, the condition \eqref{eq:boundForSingularValsOfVr_timeDependentFactorizableMOR} is equivalent to $\Vr$ having full column rank.
\end{remark}

\subsection{Separable Approximation Ansatz}
\label{sec:separableMOR}

A notable special case of the class of ansatzes considered in \cref{sec:factorizableMOR} is given by ansatzes of the form
\begin{equation}
	\label{eq:separableMORAnsatz}
	\sol(t) \approx \Vs(\shift(t))\amplitude(t),
\end{equation}
with given mapping $\Vs\colon \RR^{\rDim_\shift}\to \RR^{\fulldim,\rDim_\amplitude}$ and \gls{rom} state
\begin{equation}
	\label{eq:separableMOR_splitState}
	\redState = 
	\begin{bmatrix}
		\amplitude\\
		\shift
	\end{bmatrix}
\end{equation}
consisting of $\amplitude\colon \timeInterval\to \RR^{\rDim_\amplitude}$ and $\shift\colon \timeInterval\to\RR^{\rDim_\shift}$ with $\rDim\vcentcolon= \rDim_\amplitude+\rDim_\shift$.
Since the ansatz \eqref{eq:separableMORAnsatz} is linear in $\amplitude$ and possibly nonlinear in $\shift$, we call this a \emph{separable} approximation ansatz, since it is the same kind of nonlinearity as in separable nonlinear least-squares problems, see for instance \cite{GolP03} and the references therein.
Separable ansatzes occur, for instance, in some \gls{mor} approaches for transport-dominated systems, where the state is approximated by a linear combination of transformed modes and the transformations are parametrized by time-dependent path variables, here $\shift$, see for instance \cite{AndF22a,AndF22b,BlaSU20,CagMS19,RimPM23,RowM00} as well as \cref{sec:numerics}.
The fact that separable ansatzes are indeed a special case of factorizable ansatzes of the form \eqref{eq:timeDependentFactorizableAnsatz} follows from
\begin{equation}
	\label{eq:separableAnsatzIsSpecialFactorizableAnsatz}
	\Vs(\shift)\amplitude = \underbrace{
	\begin{bmatrix}
		\Vs(\shift) & 0
	\end{bmatrix}
	}_{=\vcentcolon \Vr(\redState)}\underbrace{
	\begin{bmatrix}
		\amplitude\\
		\shift
	\end{bmatrix}
	}_{= \redState}
	.
\end{equation}

As \eqref{eq:separableAnsatzIsSpecialFactorizableAnsatz} demonstrates that \eqref{eq:separableMORAnsatz} is a special case of \eqref{eq:timeDependentFactorizableAnsatz}, we could in principle use \Cref{thm:nonlinearPHDAE_Q_factorizableMOR} to obtain a structure-preserving \gls{mor} scheme.
However, the special $\Vr$ defined in \eqref{eq:separableAnsatzIsSpecialFactorizableAnsatz} has $\rDim_{\shift}$ zero columns.
Consequently, the state equation of the resulting \gls{rom} constructed as in \Cref{thm:nonlinearPHDAE_Q_factorizableMOR} has $\rDim_{\shift}$ vanishing equations, i.e., this \gls{mor} approach would always yield a singular \gls{dae} system as \gls{rom}.
In the following we demonstrate that we may alternatively obtain a port-Hamiltonian \gls{rom} via weighted residual minimization in the spirit of \cref{sec:linearPHMOR}, by exploiting the special structure of the ansatz \eqref{eq:separableMORAnsatz}.
To this end, we consider a linear time-varying \gls{fom} of the form \eqref{eq:pHLTV}--\eqref{eq:pHLTVconditions} with pointwise invertible $E$ and $Q$.
Based on the ansatz \eqref{eq:separableMORAnsatz} and a suitably weighted residual minimization approach, we obtain a \gls{rom} of the form \eqref{eq:nonlinearPHDAE_Q_factorizableMOR}, where $\redE,\redJ,\redR,\redQ,\redrph,\redB$ are defined as
\begin{subequations}
	\label{eq:pHROM_LTVFOM_separableMOR_coefficients}
	\begin{equation}
		\begin{aligned}
			\redE(t,\redState) &\vcentcolon= 
			\begin{bmatrix}
				\redE_{11}(t,\shift) 							& \redE_{12}(t,\amplitude,\shift)\\
				\redE_{12}(t,\amplitude,\shift)^\top 	& \redE_{22}(t,\amplitude,\shift)
			\end{bmatrix}
			,\\ 
			\redE_{11}(t,\shift) &\vcentcolon= \Vs(\shift)^\top Q(t)^\top E(t)\Vs(\shift)\in\RR^{\rDim_{\amplitude},\rDim_{\amplitude}},\\ 
			\redE_{12}(t,\amplitude,\shift) &\vcentcolon= \Vs(\shift)^\top Q(t)^\top E(t)\widehat\Vs(\shift)\amplitude\in\RR^{\rDim_{\amplitude},\rDim_{\shift}},\\ 
			\hspace{0.75cm}\redE_{22}(t,\amplitude,\shift) &\vcentcolon= \left(\widehat\Vs(\shift)\amplitude\right)^\top Q(t)^\top E(t)\widehat\Vs(\shift)\amplitude\in\RR^{\rDim_{\shift},\rDim_{\shift}},
		\end{aligned}
	\end{equation}
	\vspace{0.2cm}
	\begin{equation}
		\begin{aligned}
			\redJ(t,\redState) &\vcentcolon= 
			\begin{bmatrix}
				\redJ_{11}(t,\shift) 			& -\redJ_{21}(t,\amplitude,\shift)^\top\\
				\redJ_{21}(t,\amplitude,\shift)	& 0
			\end{bmatrix}
			,\\
			\redJ_{11}(t,\shift) &\vcentcolon= \Vs(\shift)^\top Q(t)^\top J(t)Q(t)\Vs(\shift)\in\RR^{\rDim_{\amplitude},\rDim_{\amplitude}},\\ 
			\hspace{1.3cm}\redJ_{21}(t,\amplitude,\shift) &\vcentcolon= \left(\widehat\Vs(\shift)\amplitude\right)^\top Q(t)^\top J(t)Q(t)\Vs(\shift)\in\RR^{\rDim_{\shift},\rDim_{\amplitude}},
		\end{aligned}
	\end{equation}
	\vspace{0.2cm}
	\begin{equation}
		\begin{aligned}
			\redR(t,\redState) &\vcentcolon= 
			\begin{bmatrix}
				\redR_{11}(t,\shift) 			& \redR_{21}(t,\amplitude,\shift)^\top\\
				\redR_{21}(t,\amplitude,\shift) & \redR_{22}(t,\amplitude,\shift)
			\end{bmatrix}
			,\\ 
			\redR_{11}(t,\shift) &\vcentcolon= \Vs(\shift)^\top Q(t)^\top R(t)Q(t)\Vs(\shift)\in\RR^{\rDim_{\amplitude},\rDim_{\amplitude}},\\ 
			\hspace{1.5cm}\redR_{21}(t,\amplitude,\shift) &\vcentcolon=\left(\widehat\Vs(\shift)\amplitude\right)^\top Q(t)^\top R(t)Q(t)\Vs(\shift)\in\RR^{\rDim_{\shift},\rDim_{\amplitude}},\\
			\redR_{22}(t,\amplitude,\shift) &\vcentcolon=\left(\widehat\Vs(\shift)\amplitude\right)^\top Q(t)^\top R(t)Q(t)\widehat\Vs(\shift)\amplitude\in\RR^{\rDim_{\shift},\rDim_{\shift}},
		\end{aligned}
	\end{equation}
	\vspace{0.2cm}
	\begin{equation}
		\begin{aligned}
			\redrph(t,\redState) &\vcentcolon= 
			\begin{bmatrix}
				\redrph_1(t,\amplitude,\shift)\\
				\redrph_2(t,\amplitude,\shift)
			\end{bmatrix}
			,\\
			\redrph_1(t,\amplitude,\shift) &\vcentcolon= \Vs(\shift)^\top Q(t)^\top K(t)\Vs(\shift)\amplitude\in\RR^{\rDim_{\amplitude}},\\ 
			\hspace{0.72cm}\redrph_2(t,\amplitude,\shift) &\vcentcolon= \left(\widehat\Vs(\shift)\amplitude\right)^\top Q(t)^\top K(t)\Vs(\shift)\amplitude\in\RR^{\rDim_{\shift}},
		\end{aligned}
	\end{equation}
	\vspace{0.2cm}
	\begin{equation}
		\redQ(t,\redState) \vcentcolon= 
		\begin{bmatrix}
			I_{\rDim_{\amplitude}} 	& 0\\
			0 									& 0
		\end{bmatrix}
		,\hspace{3.25cm}
	\end{equation}
	\vspace{0.2cm}
	\begin{equation}
		\begin{aligned}
			\redB(t,\redState) &\vcentcolon=
			\begin{bmatrix}
				\redB_1(t,\shift)\\
				\redB_2(t,\amplitude,\shift)
			\end{bmatrix}
			,\\ 
			\redB_1(t,\shift) &\vcentcolon= \Vs(\shift)^\top Q(t)^\top B(t)\in\RR^{\rDim_{\amplitude},m},\\ 
			\redB_2(t,\amplitude,\shift) &\vcentcolon= \left(\widehat\Vs(\shift)\amplitude\right)^\top Q(t)^\top B(t)\in\RR^{\rDim_{\shift},m}.\hspace{0.3cm}
		\end{aligned}
	\end{equation}
\end{subequations}
Here, we use the notation for the block components of $\redState$ as in \eqref{eq:separableMOR_splitState} and $\widehat\Vs\colon \RR^{\rDim_{\shift}}\to\Hom(\RR^{\rDim_{\amplitude}},\RR^{\fulldim,\rDim_{\shift}})$ is defined via
\begin{equation}
	\label{eq:widehatVr_separableMOR}
	\widehat\Vs(\eta_1)(\eta_2)\eta_3 \vcentcolon= \Vs'(\eta_1)(\eta_3)\eta_2\quad \text{for all }(\eta_1,\eta_2,\eta_3)\in\RR^{\rDim_{\shift}}\times\RR^{\rDim_{\amplitude}}\times \RR^{\rDim_{\shift}},
\end{equation}
where $\Vs'$ denotes the derivative of $\Vs$.
Moreover, we note that the \gls{rom} given by \eqref{eq:nonlinearPHDAE_Q_factorizableMOR} and \eqref{eq:pHROM_LTVFOM_separableMOR_coefficients} is obtained by enforcing the residual at $t\in\timeInterval$ to be orthogonal to the column space of $Q(t)[\Vs(\shift(t))\; \,\widehat\Vs(\shift(t))\amplitude(t)]$.
In the following theorem, it is stated that this leads to a \gls{rom} which is not only \gls{ph}, but also optimal in the sense of weighted residual minimization.
The boundedness of the $\amplitude$ block component of the \gls{rom} state is addressed in the upcoming \Cref{cor:nonlinearPHDAE_Q_separableMOR_stability}, see also \Cref{rem:nonlinearPHDAE_Q_factorizableMOR_stability_specialCases}.

\begin{theorem}[Structure-preserving \gls{mor} for \eqref{eq:pHLTV} using a separable approximation ansatz]
	\label{thm:LTVPH_separableMOR}
	Consider the \gls{ph} system \eqref{eq:pHLTV} with $E,K,J,R,Q$ satisfying pointwise \eqref{eq:pHLTVconditions} and let $E$ and $Q$ be pointwise invertible.
	Besides, let $\Vs\colon\RR^{\rDim_\shift}\to\RR^{\fulldim,\rDim_\amplitude}$ with $\rDim_\amplitude,\rDim_\shift\in\NN$ and $\rDim\vcentcolon=\rDim_\amplitude+\rDim_\shift\le\fulldim$ be continuously differentiable and consider the corresponding \gls{rom} \eqref{eq:nonlinearPHDAE_Q_factorizableMOR} with coefficients $\redE,\redJ,\redR,\redQ,\redrph,\redB$ as defined in \eqref{eq:pHROM_LTVFOM_separableMOR_coefficients}.
	Moreover, we define the \gls{rom} Hamiltonian $\redHam\colon \RR_{\ge 0}\times\RR^\rDim\to \RR$ via
	\begin{equation}
		\label{eq:LTVPH_separableMOR_redHam}
		\redHam(t,\redState) \vcentcolon= \frac12 \amplitude^\top \Vs(\shift)^\top E(t)^\top Q(t)\Vs(\shift)\amplitude,
	\end{equation}
	where we use the notation from \eqref{eq:separableMOR_splitState} for the block components of $\redState$.
	Furthermore, we introduce the residual mapping $\res\colon \RR_{\ge 0}\times\RR^{\rDim_{\amplitude}}\times\RR^{\rDim_{\shift}}\times\RR^{\rDim_{\amplitude}}\times\RR^{\rDim_{\shift}}\times\RR^m\to\RR^{\fulldim}$ via
	\begin{align*}
		\res(t,\eta_1,\eta_2,\eta_3,\eta_4,\eta_5) &\vcentcolon= E(t)\left(\Vs(\eta_4)\eta_1+\Vs'(\eta_4)(\eta_2)\eta_3\right)\\
		&\quad-((J(t)-R(t))Q(t)-K(t))\Vs(\eta_4)\eta_3-B(t)\eta_5.
	\end{align*}
	Then, the following assertions hold.
	\begin{enumerate}[(i)]
		\item \label{itm:LTVPH_separableMOR_structurePreservation}The \gls{rom} Hamiltonian $\redHam$ is continuously differentiable and the \gls{rom} coefficients satisfy \eqref{eq:nonlinearPHDAEROMProperties} for all $(t,\redState)\in\RR_{\ge 0}\times\RR^{\rDim}$.
		\item \label{itm:LTVPH_separableMOR_optimality}The \gls{rom} given by \eqref{eq:nonlinearPHDAE_Q_factorizableMOR} and \eqref{eq:pHROM_LTVFOM_separableMOR_coefficients} is optimal in the sense that any solution $\redState$ of \eqref{eq:nonlinearPHDAE_Q_factorizableMOR_stateEq} satisfies
			\begin{equation}
				\label{eq:pHROM_LTVFOM_separableMOR_optimality}
				\dot{\redState}(t) = 
				\begin{bmatrix}
					\dot{\amplitude}(t)\\
					\dot{\shift}(t)
				\end{bmatrix}
				\in \argmin_{
				\begin{bsmallmatrix}
					\eta_1\\
					\eta_2
				\end{bsmallmatrix}
				\in\RR^{\rDim_{\amplitude}+\rDim_{\shift}}} \frac12\norm{\res(t,\eta_1,\eta_2,\amplitude(t),\shift(t),u(t))}_{E(t)^{-\top}Q(t)^\top}^2
			\end{equation}
			for all $t\in \timeInterval$ and for any input signal $u\colon \RR_{\ge 0}\to\RR^m$ which admits a solution of \eqref{eq:nonlinearPHDAE_Q_factorizableMOR_stateEq}.
	\end{enumerate}
\end{theorem}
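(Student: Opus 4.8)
The plan is to collapse both assertions to a compact reformulation of the \gls{rom} in terms of the single matrix
\[
	\calV(t,\redState) \vcentcolon= \bigl[\,\Vs(\shift)\ \ \widehat{\Vs}(\shift)\amplitude\,\bigr]\in\RR^{\fulldim,\rDim},
\]
where I use the block notation \eqref{eq:separableMOR_splitState} and read $\widehat{\Vs}(\shift)\amplitude$ as the $\fulldim\times\rDim_{\shift}$ matrix whose $j$-th column is $\partial_{\shift_j}\Vs(\shift)\amplitude$ (obtained from \eqref{eq:widehatVr_separableMOR} with third argument $e_j$ and second argument $\amplitude$). First I would establish, by expanding the definitions \eqref{eq:pHROM_LTVFOM_separableMOR_coefficients} block by block and repeatedly using $E^\top Q=Q^\top E$ from \eqref{eq:pHLTVconditions}, the relations $\redE=\calV^\top Q^\top E\calV$, $\redrph=\calV^\top Q^\top K\Vs(\shift)\amplitude$, $\redB=\calV^\top Q^\top B$, as well as $\redJ\redQ\redState=\calV^\top Q^\top JQ\Vs(\shift)\amplitude$ and $\redR\redQ\redState=\calV^\top Q^\top RQ\Vs(\shift)\amplitude$; the last two hold because $\redQ\redState$ retains only the $\amplitude$-block, so the artificially vanishing $(2,2)$-block of $\redJ$ is irrelevant. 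I would also record that the residual mapping reduces to
\[
	\res(t,\eta_1,\eta_2,\amplitude,\shift,u)=E(t)\calV(t,\redState)\begin{bmatrix}\eta_1\\\eta_2\end{bmatrix}-\bigl((J(t)-R(t))Q(t)-K(t)\bigr)\Vs(\shift)\amplitude-B(t)u,
\]
again by the column description of $\widehat{\Vs}(\shift)\amplitude$ applied to the term $\Vs'(\shift)(\eta_2)\amplitude$.

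For assertion \eqref{itm:LTVPH_separableMOR_structurePreservation} I would check the four conditions in \eqref{eq:nonlinearPHDAEROMProperties} in turn. Continuous differentiability of $\redHam$ follows from $\Vs\in C^1$ together with $E^\top Q\in C^1(\RR_{\ge0},\RR^{\fulldim,\fulldim})$. Skew-symmetry of $\redJ$ is immediate from its block form once one notes $\redJ_{11}=-\redJ_{11}^\top$ by $J=-J^\top$, and $\redR=\redR^\top\ge0$ follows from $\redR=\calV^\top Q^\top RQ\calV$ and $R\ge0$. For $\partial_t\redHam=\redrph^\top\redQ\redState$ I would differentiate the quadratic form \eqref{eq:LTVPH_separableMOR_redHam} in $t$, substitute $\tfrac{\diff}{\diff t}(Q^\top E)=Q^\top K+K^\top Q$ from \eqref{eq:pHLTVconditions}, and symmetrize the resulting scalar to arrive at $\amplitude^\top\Vs(\shift)^\top Q^\top K\Vs(\shift)\amplitude=\redrph_1^\top\amplitude=\redrph^\top\redQ\redState$. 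Finally, for $\nabla_{\redState}\redHam=\redE^\top\redQ\redState$, note that $\redE$ is symmetric (again via $E^\top Q=Q^\top E$), so the right-hand side has $\amplitude$-block $\redE_{11}\amplitude$ and $\shift$-block $\redE_{12}^\top\amplitude$; differentiating \eqref{eq:LTVPH_separableMOR_redHam} with respect to $\amplitude$ gives $\redE_{11}\amplitude$, while differentiating with respect to $\shift$ through the $\shift$-dependence of $\Vs$ and symmetrizing produces the vector whose $j$-th entry is $\amplitude^\top(\partial_{\shift_j}\Vs)^\top Q^\top E\Vs\amplitude$, which equals the $j$-th entry of $\redE_{12}^\top\amplitude$ by the column description of $\widehat{\Vs}(\shift)\amplitude$ together with $E^\top Q=Q^\top E$.

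For assertion \eqref{itm:LTVPH_separableMOR_optimality} I would fix $t\in\timeInterval$ and use the reformulation of $\res$ above, writing $\res=E\calV\,\eta-g$ with $\eta=\begin{bsmallmatrix}\eta_1\\\eta_2\end{bsmallmatrix}$ and $g\vcentcolon=((J-R)Q-K)\Vs(\shift)\amplitude+Bu$. As in the proof of \Cref{thm:linearPH_linearMOR}, $E(t)^{-\top}Q(t)^\top=E(t)^{-\top}(Q(t)^\top E(t))E(t)^{-1}$ is symmetric positive definite, $Q^\top E$ being invertible and positive semi-definite, hence positive definite; therefore $\eta\mapsto\tfrac12\norm{\res}_{E(t)^{-\top}Q(t)^\top}^2$ is a convex quadratic with constant Hessian $\calV^\top E^\top(E^{-\top}Q^\top)E\calV=\calV^\top Q^\top E\calV=\redE\ge0$, and its minimizers coincide with the solutions of the first-order condition $(E\calV)^\top E^{-\top}Q^\top\res=0$, i.e.\ $\calV^\top Q^\top\res=0$. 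Inserting the relations from the first paragraph, this condition reads
\[
	\redE(t,\redState)\begin{bmatrix}\eta_1\\\eta_2\end{bmatrix}+\redrph(t,\redState)=\bigl(\redJ(t,\redState)-\redR(t,\redState)\bigr)\redQ(t,\redState)\redState+\redB(t,\redState)u,
\]
which is precisely the \gls{rom} state equation \eqref{eq:nonlinearPHDAE_Q_factorizableMOR_stateEq}. Hence $\dot{\redState}(t)$ lies in the $\argmin$ set for every solution $\redState$ of \eqref{eq:nonlinearPHDAE_Q_factorizableMOR_stateEq}, which is the claim.

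The computations are elementary throughout; the one step requiring care is the gradient $\nabla_{\shift}\redHam=(\widehat{\Vs}(\shift)\amplitude)^\top Q^\top E\Vs(\shift)\amplitude$, where one must differentiate the quadratic form through the $\shift$-dependence of $\Vs$, symmetrize several scalars using $E^\top Q=Q^\top E$, and then match the outcome column by column with $\redE_{12}^\top\amplitude$. The conceptually substantive choice --- testing the residual against $\colsp\bigl(Q(t)[\,\Vs(\shift)\ \ \widehat{\Vs}(\shift)\amplitude\,]\bigr)$ rather than zero-padding $\Vs$ as in \eqref{eq:separableAnsatzIsSpecialFactorizableAnsatz}, which would yield a singular \gls{dae} --- is already built into the definitions \eqref{eq:pHROM_LTVFOM_separableMOR_coefficients}, so the proof itself is essentially bookkeeping.
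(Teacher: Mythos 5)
Your proposal is correct and follows essentially the same route as the paper: for the optimality statement you write down exactly the first-order condition and the Hessian $\redE(t,\redState(t))$, note convexity since the Hessian is constant in $(\eta_1,\eta_2)$ and positive semi-definite, and match the stationarity condition with \eqref{eq:nonlinearPHDAE_Q_factorizableMOR_stateEq}, just as the paper does in mimicking \Cref{thm:linearPH_linearMOR}. The only cosmetic difference is in part \eqref{itm:LTVPH_separableMOR_structurePreservation}: the paper obtains it as the special case $\rph(t,\sol)=K(t)\sol$, $\ham(t,\sol)=\frac12\sol^\top E(t)^\top Q(t)\sol$ of \Cref{thm:nonlinearPHDAE_Q_separableMOR}, whereas you redo the same chain-rule/symmetrization computation directly on the quadratic Hamiltonian, which is equivalent.
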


\begin{proof}
	The statement \eqref{itm:LTVPH_separableMOR_structurePreservation} is a special case of the upcoming \Cref{thm:nonlinearPHDAE_Q_separableMOR}.
	The proof of \eqref{itm:LTVPH_separableMOR_optimality} follows along the lines of the proof of \Cref{thm:linearPH_linearMOR}, where the major difference is that the first-order necessary optimality condition of the minimization problem \eqref{eq:pHROM_LTVFOM_separableMOR_optimality} for fixed $t\in\timeInterval$ is given by
	\begin{align*}
		&\Vs(\shift(t))^\top Q(t)^\top E(t)\left(\Vs(\shift(t))\eta_1+\widehat\Vs(\shift(t))(\amplitude(t))\eta_2\right)\\
		& = \Vs(\shift(t))^\top Q(t)^\top\left(\left((J(t)-R(t))Q(t)-K(t)\right)\Vs(\shift(t))\amplitude(t)-B(t)u(t)\right),\\[0.2cm]
		&\left(\widehat\Vs(\shift(t))\amplitude(t)\right)^\top Q(t)	^\top E(t)\left(\Vs(\shift(t))\eta_1+\widehat\Vs(\shift(t))(\amplitude(t))\eta_2\right)\\
		& = \left(\widehat\Vs(\shift(t))\amplitude(t)\right)^\top Q(t)^\top\left((J(t)-R(t))Q(t)-K(t)\right)\Vs(\shift(t))\amplitude(t)\\
		&\quad-\left(\widehat\Vs(\shift(t))\amplitude(t)\right)^\top Q(t)^\top B(t)u(t)
	\end{align*}
	and the corresponding Hessian is 
	\begin{equation*}
		\begin{bmatrix}
			\Vs(\shift(t))^\top Q(t)^\top E(t)\Vs(\shift(t)) 												& \Vs(\shift(t))^\top Q(t)^\top E(t)\widehat\Vs(\shift(t))\amplitude(t)\\
			\left(\widehat\Vs(\shift(t))\amplitude(t)\right)^\top Q(t)^\top E(t)\Vs(\shift(t)) 	& \left(\widehat\Vs(\shift(t))\amplitude(t)\right)^\top Q(t)^\top E(t)\widehat\Vs(\shift(t))\amplitude(t)
		\end{bmatrix}
		=\redE(t,\redState(t)).
	\end{equation*}
	Especially, we note that the Hessian depends neither on $\eta_1$ nor on $\eta_2$ and is symmetric and positive semi-definite.
	Consequently, the minimization problem \eqref{eq:pHROM_LTVFOM_separableMOR_optimality} is convex and the first-order necessary optimality condition is also sufficient, cf.~\cite{Nes04}.
\end{proof}

We proceed by considering structure-preserving \gls{mor} for nonlinear time-varying pH systems of the form \eqref{eq:nonlinearPHDAE_Q}--\eqref{eq:nonlinearPHDAEProperties_Q_new}.
For this purpose, we first point out that the residual minimization property of the \gls{rom} considered in \Cref{thm:LTVPH_separableMOR} relies on the assumption of $E^\top Q$ being pointwise symmetric and positive definite.
Since $E^\top Q$ coincides with the Hessian of the Hamiltonian with respect to the state, the pointwise positive definiteness of $E^\top Q$ corresponds to the assumption that for fixed $t\in\timeInterval$ the Hamiltonian is equivalent to a squared norm of the state, cf.~\Cref{rem:motivationForWeightedResNorm}.
Also when considering a nonlinear port-Hamiltonian \gls{fom} of the form \eqref{eq:nonlinearPHDAE_Q}--\eqref{eq:nonlinearPHDAEProperties_Q_new}, one may obtain a port-Hamiltonian \gls{rom} via weighted residual minimization, provided that $E^\top Q$ is pointwise symmetric and positive definite.
However, in contrast to the linear case, the matrix function $E^\top Q$ does in general not coincide with the Hessian of the Hamiltonian associated with the nonlinear \gls{ph} system \eqref{eq:nonlinearPHDAE_Q}.
Thus, assuming $E^\top Q$ to be pointwise positive definite in the context of the nonlinear \gls{ph} structure \eqref{eq:nonlinearPHDAE_Q}--\eqref{eq:nonlinearPHDAEProperties_Q_new} appears to be less natural than in the context of linear \gls{ph} systems.
For this reason, we only focus in the following on the structure preservation without addressing the question whether the \gls{rom} is optimal in some sense.

Similarly as in the case of a linear \gls{fom}, the \gls{rom} is constructed by enforcing the residual at $t\in\timeInterval$ to be orthogonal to the column space of $Q(t,\Vs(\shift(t))\amplitude(t))[\Vs(\shift(t))\; \,\widehat\Vs(\shift(t))\amplitude(t)]$.
The resulting \gls{rom} is of the form \eqref{eq:nonlinearPHDAE_Q_factorizableMOR} with $\redE,\redJ,\redR,\redQ,\redrph,\redB$ being defined via
\begin{subequations}
	\label{eq:pHROM_NTVFOM_separableMOR_coefficients}
	\begin{equation}
		\label{eq:pHROM_NTVFOM_separableMOR_coefficients_E}
		\begin{aligned}
			\redE(t,\redState) &\vcentcolon= 
			\begin{bmatrix}
				\redE_{11}(t,\redState) & \redE_{12}(t,\redState)\\
				\redE_{21}(t,\redState) & \redE_{22}(t,\redState)
			\end{bmatrix}
			,\\ 
			\redE_{11}(t,\redState) &\vcentcolon= \Vs(\shift)^\top Q(t,\Vs(\shift)\amplitude)^\top E(t,\Vs(\shift)\amplitude)\Vs(\shift)\in\RR^{\rDim_{\amplitude},\rDim_{\amplitude}},\\ 
			\redE_{12}(t,\redState) &\vcentcolon= \Vs(\shift)^\top Q(t,\Vs(\shift)\amplitude)^\top E(t,\Vs(\shift)\amplitude)\widehat\Vs(\shift)\amplitude\in\RR^{\rDim_{\amplitude},\rDim_{\shift}},\\ 
			\redE_{21}(t,\redState) &\vcentcolon=  \left(\widehat\Vs(\shift)\amplitude\right)^\top Q(t,\Vs(\shift)\amplitude)^\top E(t,\Vs(\shift)\amplitude)\Vs(\shift)\in\RR^{\rDim_{\amplitude},\rDim_{\shift}},\\
			\redE_{22}(t,\redState) &\vcentcolon= \left(\widehat\Vs(\shift)\amplitude\right)^\top Q(t,\Vs(\shift)\amplitude)^\top E(t,\Vs(\shift)\amplitude)\widehat\Vs(\shift)\amplitude\in\RR^{\rDim_{\shift},\rDim_{\shift}},\hspace{1.65cm}
		\end{aligned}
	\end{equation}		
	\vspace{0.2cm}
	\begin{equation}
		\begin{aligned}
			\redJ(t,\redState) &\vcentcolon= 
			\begin{bmatrix}
				\redJ_{11}(t,\redState) 	& -\redJ_{21}(t,\redState)^\top\\
				\redJ_{21}(t,\redState) 	& 0
			\end{bmatrix}
			,\\
			\redJ_{11}(t,\redState) &\vcentcolon= \Vs(\shift)^\top Q(t,\Vs(\shift)\amplitude)^\top J(t,\Vs(\shift)\amplitude)Q(t,\Vs(\shift)\amplitude)\Vs(\shift)\in\RR^{\rDim_{\amplitude},\rDim_{\amplitude}},\\ 
			\hspace{1.26cm}\redJ_{21}(t,\redState) &\vcentcolon= \left(\widehat\Vs(\shift)\amplitude\right)^\top Q(t,\Vs(\shift)\amplitude)^\top J(t,\Vs(\shift)\amplitude)Q(t,\Vs(\shift)\amplitude)\Vs(\shift)\in\RR^{\rDim_{\shift},\rDim_{\amplitude}},\hspace{0.2cm}
		\end{aligned}
	\end{equation}		
	\vspace{0.2cm}
	\begin{equation}
		\begin{aligned}
			\redR(t,\redState) &\vcentcolon= 
			\begin{bmatrix}
				\redR_{11}(t,\redState)	& \redR_{21}(t,\redState)^\top\\
				\redR_{21}(t,\redState) 	& \redR_{22}(t,\redState)
			\end{bmatrix}
			,\\ 
			\redR_{11}(t,\redState) &\vcentcolon= \Vs(\shift)^\top Q(t,\Vs(\shift)\amplitude)^\top R(t,\Vs(\shift)\amplitude)Q(t,\Vs(\shift)\amplitude)\Vs(\shift)\in\RR^{\rDim_{\amplitude},\rDim_{\amplitude}},\\ 
			\redR_{21}(t,\redState) &\vcentcolon=\left(\widehat\Vs(\shift)\amplitude\right)^\top Q(t,\Vs(\shift)\amplitude)^\top R(t,\Vs(\shift)\amplitude)Q(t,\Vs(\shift)\amplitude)\Vs(\shift)\in\RR^{\rDim_{\shift},\rDim_{\amplitude}},\\ 
			\hspace{1.2cm}\redR_{22}(t,\redState) &\vcentcolon= \left(\widehat\Vs(\shift)\amplitude\right)^\top Q(t,\Vs(\shift)\amplitude)^\top R(t,\Vs(\shift)\amplitude)Q(t,\Vs(\shift)\amplitude)\widehat\Vs(\shift)\amplitude\in\RR^{\rDim_{\shift},\rDim_{\shift}},
		\end{aligned}
	\end{equation}		
	\vspace{0.2cm}
	\begin{equation}
		\begin{aligned}
			\redrph(t,\redState) &\vcentcolon= 
			\begin{bmatrix}
				\redrph_1(t,\redState)\\
				\redrph_2(t,\redState)
			\end{bmatrix}
			,\\
			\redrph_1(t,\redState) &\vcentcolon= \Vs(\shift)^\top Q(t,\Vs(\shift)\amplitude)^\top \rph(t,\Vs(\shift)\amplitude)\in\RR^{\rDim_{\amplitude}},\\ 
			\redrph_2(t,\redState) &\vcentcolon= \left(\widehat\Vs(\shift)\amplitude\right)^\top Q(t,\Vs(\shift)\amplitude)^\top \rph(t,\Vs(\shift)\amplitude)\in\RR^{\rDim_{\shift}},\hspace{2.98cm}
		\end{aligned}
	\end{equation}		
	\vspace{0.2cm}
	\begin{equation}
		\begin{aligned}
			\redQ(t,\redState) \vcentcolon= 
			\begin{bmatrix}
				I_{\rDim_{\amplitude}} 	& 0\\
				0 									& 0
			\end{bmatrix}
			,\hspace{8.52cm}
		\end{aligned}
	\end{equation}		
	\vspace{0.2cm}
	\begin{equation}
		\begin{aligned}
			\redB(t,\redState) &\vcentcolon=
			\begin{bmatrix}
				\redB_1(t,\redState)\\
				\redB_2(t,\redState)
			\end{bmatrix}
			,\\ 
			\redB_1(t,\redState) &\vcentcolon= \Vs(\shift)^\top Q(t,\Vs(\shift)\amplitude)^\top B(t,\Vs(\shift)\amplitude)\in\RR^{\rDim_{\amplitude},m},\\ 
			\redB_2(t,\redState) &\vcentcolon= \left(\widehat\Vs(\shift)\amplitude\right)^\top Q(t,\Vs(\shift)\amplitude)^\top B(t,\Vs(\shift)\amplitude)\in\RR^{\rDim_{\shift},m}.\hspace{2.63cm}
		\end{aligned}
	\end{equation}
\end{subequations}
Here, we use again the notation from \eqref{eq:separableMOR_splitState} for the block components of $\redState$, while $\widehat\Vs$ is as defined in \eqref{eq:widehatVr_separableMOR}.
In the following theorem, it is stated that the \gls{rom} given by \eqref{eq:nonlinearPHDAE_Q_factorizableMOR} and \eqref{eq:pHROM_NTVFOM_separableMOR_coefficients} inherits the \gls{ph} structure of the corresponding \gls{fom}.

\begin{theorem}[Structure-preserving \gls{mor} for \eqref{eq:nonlinearPHDAE_Q} using a separable approximation ansatz]
	\label{thm:nonlinearPHDAE_Q_separableMOR}
	Consider the \gls{ph} system \eqref{eq:nonlinearPHDAE_Q} with $E,\rph,J,R,Q$ and the associated Hamiltonian $\ham$ satisfying \eqref{eq:nonlinearPHDAEProperties_Q_new}.
	Furthermore, let $\Vs\colon\RR^{\rDim_\shift}\to\RR^{\fulldim,\rDim_\amplitude}$ with $\rDim_\amplitude,\rDim_\shift\in\NN$ and $\rDim\vcentcolon=\rDim_\amplitude+\rDim_\shift\le\fulldim$ be continuously differentiable and consider the corresponding \gls{rom} \eqref{eq:nonlinearPHDAE_Q_factorizableMOR} with coefficients $\redE,\redJ,\redR,\redQ,\redrph,\redB$ as defined in \eqref{eq:pHROM_NTVFOM_separableMOR_coefficients}.
	Moreover, we consider the \gls{rom} Hamiltonian $\redHam\colon\RR_{\ge 0}\times\RR^{\rDim}\to\RR$ defined via $\redHam(t,\redState) \vcentcolon= \ham(t,\Vs(\shift)\amplitude)$.
	Then, $\redHam$ is continuously differentiable and the \gls{rom} coefficients satisfy \eqref{eq:nonlinearPHDAEROMProperties} for all $(t,\redState)\in\RR_{\ge 0}\times \RR^{\rDim}$.
\end{theorem}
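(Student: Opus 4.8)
The plan is to verify the four identities in \eqref{eq:nonlinearPHDAEROMProperties} directly, mirroring the proof of \Cref{thm:nonlinearPHDAE_Q_factorizableMOR}; I would not try to deduce the claim from that theorem, since, as noted in the text, the factorizable construction applied to $\Vr=[\Vs(\shift)\;\,0]$ produces a singular $\redE$, whereas \eqref{eq:pHROM_NTVFOM_separableMOR_coefficients} uses the richer test space $Q(t,\Vs(\shift)\amplitude)[\Vs(\shift)\;\,\widehat\Vs(\shift)\amplitude]$, which is precisely $Q(t,\Vs(\shift)\amplitude)$ times the Jacobian of the ansatz map $\redState=(\amplitude,\shift)\mapsto\Vs(\shift)\amplitude$. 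First I would dispose of the algebraic identities: skew-symmetry of $\redJ$ reduces to $\redJ_{11}=-\redJ_{11}^\top$, which is immediate from $J=-J^\top$ (the off-diagonal blocks are $\redJ_{21}$ and $-\redJ_{21}^\top$ by construction); and writing $M(t,\redState)\vcentcolon=Q(t,\Vs(\shift)\amplitude)[\Vs(\shift)\;\,\widehat\Vs(\shift)\amplitude]\in\RR^{\fulldim,\rDim}$, inspection of \eqref{eq:pHROM_NTVFOM_separableMOR_coefficients} shows $\redR=M^\top R(t,\Vs(\shift)\amplitude)M$, so $R=R^\top\ge 0$ yields $\redR=\redR^\top\ge 0$.

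Continuous differentiability of $\redHam$ follows because $(\amplitude,\shift)\mapsto\Vs(\shift)\amplitude$ is $C^1$ (as $\Vs\in C^1$) and $\ham\in C^1$, so their composition with the identity in $t$ is $C^1$. The heart of the proof is the gradient identity. Differentiating $\redHam(t,\redState)=\ham(t,\Vs(\shift)\amplitude)$ in a direction $\zeta=(\zeta_{\amplitude},\zeta_{\shift})\in\RR^{\rDim_{\amplitude}}\times\RR^{\rDim_{\shift}}$ and using the definition \eqref{eq:widehatVr_separableMOR} of $\widehat\Vs$ to rewrite $\Vs'(\shift)(\zeta_{\shift})\amplitude=\widehat\Vs(\shift)(\amplitude)\zeta_{\shift}$ gives
\[
	\partial_{\redState}\redHam(t,\redState)\zeta = \partial_{\sol}\ham(t,\Vs(\shift)\amplitude)\bigl(\Vs(\shift)\zeta_{\amplitude}+\widehat\Vs(\shift)(\amplitude)\zeta_{\shift}\bigr).
\]
Substituting $\nabla_{\sol}\ham(t,\sol)=E(t,\sol)^\top Q(t,\sol)\sol$ from \eqref{eq:nonlinearPHDAEProperties_Q_new} at $\sol=\Vs(\shift)\amplitude$ turns the right-hand side into $\amplitude^\top\Vs(\shift)^\top Q^\top E[\Vs(\shift)\;\,\widehat\Vs(\shift)\amplitude]\zeta$, with all coefficients evaluated at $(t,\Vs(\shift)\amplitude)$; recognizing the blocks $\redE_{11}$, $\redE_{12}$ from \eqref{eq:pHROM_NTVFOM_separableMOR_coefficients_E} and that $\redQ(t,\redState)\redState=[\amplitude^\top\;\,0]^\top$, this equals $(\redE(t,\redState)^\top\redQ(t,\redState)\redState)^\top\zeta$, hence $\nabla_{\redState}\redHam=\redE^\top\redQ\redState$. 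Finally, since $\Vs$ is $t$-independent, $\partial_t\redHam(t,\redState)=\partial_t\ham(t,\Vs(\shift)\amplitude)=\rph(t,\Vs(\shift)\amplitude)^\top Q(t,\Vs(\shift)\amplitude)\Vs(\shift)\amplitude$ by \eqref{eq:nonlinearPHDAEProperties_Q_new}, and this coincides with $\redrph_1(t,\redState)^\top\amplitude=\redrph(t,\redState)^\top\redQ(t,\redState)\redState$.

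I expect the only genuine care to be in the gradient identity: unlike the linear setting \eqref{eq:pHLTVconditions}, the product $E^\top Q$ need not be symmetric under \eqref{eq:nonlinearPHDAEProperties_Q_new}, so one must preserve the factor ordering $\amplitude^\top\Vs^\top Q^\top E[\Vs\;\,\widehat\Vs\amplitude]$ and match it against $\redE^\top\redQ\redState$ rather than $\redE\redQ\redState$, and similarly keep $\redE_{12}$ and $\redE_{21}$ distinct. The remaining work—consistently reading $\widehat\Vs(\shift)(\amplitude)$ as the $\fulldim\times\rDim_{\shift}$ matrix that recurs throughout \eqref{eq:pHROM_NTVFOM_separableMOR_coefficients} and tracking the $2\times2$ block bookkeeping—is routine.
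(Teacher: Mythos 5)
Your proposal is correct and follows essentially the same route as the paper: a direct chain-rule computation of $\partial_{\amplitude}\redHam$, $\partial_{\shift}\redHam$ (via $\Vs'(\shift)(\zeta_{\shift})\amplitude=\widehat\Vs(\shift)(\amplitude)\zeta_{\shift}$) and $\partial_t\redHam$, matched against $\redE^\top\redQ\redState$ and $\redrph^\top\redQ\redState$, with the $\redJ$, $\redR$ properties inherited from $J$, $R$. The paper merely delegates the latter and the differentiability of $\redHam$ to the argument of \Cref{thm:nonlinearPHDAE_Q_factorizableMOR}, while you spell them out (e.g.\ via $\redR=M^\top R M$ with $M=Q[\Vs\;\,\widehat\Vs\amplitude]$), which is a harmless elaboration of the same idea.
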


\begin{proof}
	The proof follows along the lines of the proof of \Cref{thm:nonlinearPHDAE_Q_factorizableMOR}, where the major difference is the computation of the partial derivatives of $\redHam$.
	In particular, the partial derivatives with respect to $\amplitude$ and $\shift$ are given by
	\begin{align*}
		\partial_{\amplitude}\redHam(t,\redState) &= \partial_{\sol}\ham(t,\Vs(\shift)\amplitude)\Vs(\shift),\\
		\partial_{\shift}\redHam(t,\redState)\zeta &= \partial_{\sol}\ham(t,\Vs(\shift)\amplitude)\Vs'(\shift)(\zeta)\amplitude = \partial_{\sol}\ham(t,\Vs(\shift)\amplitude)\widehat\Vs(\shift)(\amplitude)\zeta
	\end{align*}
	for all $(t,\redState,\zeta)\in\RR_{\ge 0}\times \RR^{\rDim}\times \RR^{\rDim_{\shift}}$ and, hence, we obtain
	\begin{align*}
		\nabla_{\redState}\redHam(t,\redState) &= 
		\begin{bmatrix}
			\partial_{\amplitude}\redHam(t,\redState) & \partial_{\shift}\redHam(t,\redState)
		\end{bmatrix}
		^\top =
		\begin{bmatrix}
			\Vs(\shift) & \widehat\Vs(\shift)\amplitude
		\end{bmatrix}
		^\top \nabla_{\sol}\ham(t,\Vs(\shift)\amplitude)\\
		&= 
		\begin{bmatrix}
			\Vs(\shift) & \widehat\Vs(\shift)\amplitude
		\end{bmatrix}
		^\top E(t,\Vs(\shift)\amplitude)^\top Q(t,\Vs(\shift)\amplitude)\Vs(\shift)\amplitude = \redE(t,\redState)^\top \redQ(t,\redState)\redState
	\end{align*}
	for all $(t,\redState)\in\RR_{\ge 0}\times \RR^{\rDim}$.
	Furthermore, for the partial derivative of $\redHam$ with respect to $t$ we compute
	\begin{equation*}
		\partial_t\redHam(t,\redState) = \partial_t\ham(t,\Vs(\shift)\amplitude) = \amplitude^\top\Vs(\shift)^\top Q(t,\Vs(\shift)\amplitude)^\top\rph(t,\Vs(\shift)\amplitude) = \redState^\top \redQ(t,\redState)^\top\redrph(t,\redState)
	\end{equation*}
	for all $(t,\redState)\in\RR_{\ge 0}\times \RR^{\rDim}$.
\end{proof}

We close this section by discussing the stability of the \gls{rom} state equation \eqref{eq:nonlinearPHDAE_Q_factorizableMOR_stateEq} with $u=0$ and coefficients as in \eqref{eq:pHROM_NTVFOM_separableMOR_coefficients}.
To this end, we first note that it is not possible to obtain a stability result as in \Cref{cor:nonlinearPHDAE_Q_factorizableMOR_stability} for two reasons.
First, in \Cref{cor:nonlinearPHDAE_Q_factorizableMOR_stability} it is assumed that $\redE$ is pointwise nonsingular and this assumption is not possible to satisfy in case that $\redE$ is defined as in \eqref{eq:pHROM_NTVFOM_separableMOR_coefficients_E}, since then $\redE$ is singular for $\amplitude=0$.
Second, in the proof of \Cref{cor:nonlinearPHDAE_Q_factorizableMOR_stability} we have used that if the \gls{fom} Hamiltonian is equivalent to a squared norm of the state and $\Vr$ satisfies \eqref{eq:boundForSingularValsOfVr_timeDependentFactorizableMOR}, then also the \gls{rom} Hamiltonian is equivalent to a squared norm of the \gls{rom} state.
In contrast, even if $\Vs$ satisfies a singular value bound similar to \eqref{eq:boundForSingularValsOfVr_timeDependentFactorizableMOR}, we may only infer that the \gls{rom} Hamiltonian is equivalent to a squared norm of $\amplitude$, but not of the whole \gls{rom} state $\redState = [\amplitude^\top\;\,\shift^\top]^\top$.
Consequently, we focus in the following on the boundedness of $\amplitude$.

\begin{corollary}[Boundedness of part of the state in \eqref{eq:nonlinearPHDAE_Q_factorizableMOR_stateEq} with \eqref{eq:pHROM_NTVFOM_separableMOR_coefficients}]
	\label{cor:nonlinearPHDAE_Q_separableMOR_stability}
	Let the assumptions of \Cref{thm:nonlinearPHDAE_Q_separableMOR} be satisfied and let there additionally exist constants $\check{c}_1,\check{c}_2\in\RR_{>0}$ with
	\begin{equation}
		\label{eq:linearPH_separableMOR_singularValueBoundsForVOfp}
		\maxSingularVal(\Vs(\eta)) \le \check{c}_1\quad \text{and}\quad \minSingularVal(\Vs(\eta)) \ge \check{c}_2\quad \text{for all }\eta\in\RR^{\rDim_{\shift}}.
	\end{equation}
	Furthermore, let the \gls{fom} Hamiltonian $\ham$ satisfy condition~\eqref{itm:normEquivalence} in \Cref{def:Lyapunov} with $V=\ham$.
	Besides, let $\redState = [\amplitude^\top\,\;\shift^\top]^\top \in C^1(\timeInterval,\RR^{\rDim_{\amplitude}+\rDim_{\shift}})$ be a solution of the \gls{rom} state equation \eqref{eq:nonlinearPHDAE_Q_factorizableMOR_stateEq} with $u=0$ and coefficients as in \eqref{eq:pHROM_NTVFOM_separableMOR_coefficients} on the time interval $\timeInterval = [t_0,\tend]$ with $t_0\in\RR_{\ge 0}$ and $\tend\in\RR_{>t_0}$.
	Then, there exists a constant $c\in\RR_{>0}$ which is independent of $t_0$ and $\tend$ and satisfies
	\begin{equation*}
		\norm{\amplitude(t)} \le c\norm{\amplitude(t_0)} \quad \text{for all }t\in\timeInterval.
	\end{equation*}
\end{corollary}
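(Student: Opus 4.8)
The plan is to mimic the energy argument behind \Cref{thm:globalExistenceAndUniqueness_pH}, but applied to the reduced-order model and keeping track only of the $\amplitude$-block. First I would invoke \Cref{thm:nonlinearPHDAE_Q_separableMOR}: it guarantees that the reduced-order model given by \eqref{eq:nonlinearPHDAE_Q_factorizableMOR} and \eqref{eq:pHROM_NTVFOM_separableMOR_coefficients} satisfies \eqref{eq:nonlinearPHDAEROMProperties} with the continuously differentiable reduced Hamiltonian $\redHam(t,\redState)=\ham(t,\Vs(\shift)\amplitude)$. Setting $\redQ(t,\redState)\redState$ in the role of $\zph$, this places the reduced-order model in the class \eqref{eq:nonlinearPHDAE}--\eqref{eq:nonlinearPHDAEProperties_new}, exactly as discussed in \cref{sec:pH}. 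Hence the dissipation inequality \eqref{eq:nonlinearPHDAE_dissipationIneq} applies along the given solution $\redState\in C^1(\timeInterval,\RR^{\rDim})$ of \eqref{eq:nonlinearPHDAE_Q_factorizableMOR_stateEq} with $u=0$, and since the $\redR$-term is non-negative it reduces to $\frac{\diff}{\diff t}\redHam(t,\redState(t))\le 0$, so that $\redHam(t,\redState(t))\le\redHam(t_0,\redState(t_0))$ for all $t\in\timeInterval$.

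Second, I would sandwich $\redHam$ between multiples of $\norm{\amplitude}^2$. Using condition~\eqref{itm:normEquivalence} in \Cref{def:Lyapunov} for $\ham$ with constants $c_2,c_3\in\RR_{>0}$, together with the singular value bounds \eqref{eq:linearPH_separableMOR_singularValueBoundsForVOfp}, the same computation as in the proof of \Cref{cor:nonlinearPHDAE_Q_factorizableMOR_stability} (with $\Vs(\shift)$ in place of $\Vr(t,\redState)$ and $\amplitude$ in place of $\redState$) yields
\begin{equation*}
	c_2\check{c}_2^2\norm{\amplitude}^2 \le \redHam(t,\redState) \le c_3\check{c}_1^2\norm{\amplitude}^2 \qquad \text{for all }(t,\redState)\in\RR_{\ge 0}\times\RR^{\rDim}.
\end{equation*}
Chaining the lower bound at time $t$, the monotonicity of $t\mapsto\redHam(t,\redState(t))$, and the upper bound at time $t_0$ then gives $c_2\check{c}_2^2\norm{\amplitude(t)}^2\le c_3\check{c}_1^2\norm{\amplitude(t_0)}^2$, i.e., the assertion with
\begin{equation*}
	c \vcentcolon= \frac{\check{c}_1}{\check{c}_2}\sqrt{\frac{c_3}{c_2}},
\end{equation*}
which depends only on $c_2,c_3,\check{c}_1,\check{c}_2$ and hence neither on $t_0$ nor on $\tend$.

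I do not expect a genuine obstacle here; the work is essentially bookkeeping. The one point that deserves a sentence of care is that the dissipation inequality \eqref{eq:nonlinearPHDAE_dissipationIneq} is formulated for systems of the form \eqref{eq:nonlinearPHDAE} rather than the $Q$-factored form \eqref{eq:nonlinearPHDAE_Q}; this is handled exactly as in \cref{sec:pH} by passing to $\zph=\redQ\redState$. A second minor point is that the chain rule underlying \eqref{eq:nonlinearPHDAE_dissipationIneq} requires $\redHam\in C^1$ and $\redState\in C^1$, both of which are available --- the former from \Cref{thm:nonlinearPHDAE_Q_separableMOR} and the latter by hypothesis. Note in particular that no invertibility of $\redE$ enters: the argument never solves for $\dot{\redState}$ but only differentiates $\redHam$ along the already-given solution, which is precisely why the conclusion controls $\amplitude$ but not the full reduced state $\redState=[\amplitude^\top\ \shift^\top]^\top$.
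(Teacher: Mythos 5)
Your proposal is correct and follows essentially the same route as the paper: invoke the pH structure of the reduced-order model from \Cref{thm:nonlinearPHDAE_Q_separableMOR} to get $\frac{\diff}{\diff t}\redHam(t,\redState(t))\le 0$ along the given $C^1$ solution with $u=0$, then sandwich $\redHam$ between $c_2\check{c}_2^2\norm{\amplitude}^2$ and $c_3\check{c}_1^2\norm{\amplitude}^2$ via condition~\eqref{itm:normEquivalence} and the singular value bounds \eqref{eq:linearPH_separableMOR_singularValueBoundsForVOfp}, arriving at the same constant $c=\frac{\check{c}_1}{\check{c}_2}\sqrt{c_3/c_2}$. Your added remarks (passing to $\zph=\redQ\redState$ and the fact that no invertibility of $\redE$ is needed) are accurate and consistent with the paper's discussion preceding the corollary.
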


\begin{proof}
	From the continuous differentiability of $\redState$ and $\redHam$, cf.~\Cref{thm:nonlinearPHDAE_Q_separableMOR}, we infer that the function $\redHamAux\colon \timeInterval\to \RR$ defined via $\redHamAux(t) \vcentcolon= \redHam(t,\redState(t))$ is continuously differentiable as well.
	Furthermore, due to the \gls{ph} structure of the \gls{rom} as stated in \Cref{thm:nonlinearPHDAE_Q_separableMOR}, we conclude that $\redHamAux$ satisfies the dissipation inequality
	\begin{equation*}
		\redHamAuxDot(t) \le 0\quad \text{for all }t\in\timeInterval,
	\end{equation*}
 	where we have used $u=0$.
 	Consequently, we obtain
	\begin{equation*}
		\redHam(t,\redState(t)) = \redHamAux(t) \le \redHamAux(t_0) = \redHam(t_0,\redState(t_0))\quad \text{for all }t\in\timeInterval.
	\end{equation*}
	Using this inequality, the singular value bounds \eqref{eq:linearPH_separableMOR_singularValueBoundsForVOfp}, and the assumption that $\ham$ satisfies condition~\eqref{itm:normEquivalence} in \Cref{def:Lyapunov} with constants $c_2,c_3\in\RR_{>0}$, we arrive at
	\begin{align*}
		\norm{\amplitude(t)}^2 &\le \frac1{\minSingularVal(\Vs(\shift(t)))^2}\norm{\Vs(\shift(t))\amplitude(t)}^2 \le \frac1{c_2\check{c}_2^2}\ham(t,\Vs(\shift(t))\amplitude(t))\\
		&= \frac1{c_2\check{c}_2^2}\redHam(t,\redState(t)) \le \frac1{c_2\check{c}_2^2}\redHam(t_0,\redState(t_0)) = \frac1{c_2\check{c}_2^2}\ham(t_0,\Vs(\shift(t_0))\amplitude(t_0)) \\
		&\le \frac{c_3}{c_2\check{c}_2^2}\norm{\Vs(\shift(t_0))\amplitude(t_0)}^2 \le \frac{c_3\maxSingularVal(\Vs(\shift(t_0)))^2}{c_2\check{c}_2^2}\norm{\amplitude(t_0)}^2\\
		&\le \frac{c_3\check{c}_1^2}{c_2\check{c}_2^2}\norm{\amplitude(t_0)}^2
	\end{align*}
	for all $t\in\timeInterval$, which yields the assertion.
\end{proof}

Having a closer look at the proof of \Cref{cor:nonlinearPHDAE_Q_separableMOR_stability}, we observe that the same argumentation may be used to infer the bound
\begin{equation*}
	\norm{\Vs(\shift(t))\amplitude(t)} \le \sqrt{\frac{c_3}{c_2}}\norm{\Vs(\shift(t_0))\amplitude(t_0)}\quad \text{for all }t\in\timeInterval
\end{equation*}
for the approximation $\Vs(\shift)\amplitude$ of the \gls{fom} state.
Especially, this bound also applies in the case where the singular values of $\Vs$ are not uniformly bounded as in \eqref{eq:linearPH_separableMOR_singularValueBoundsForVOfp}.
In the special case where the \gls{fom} is linear and time-invariant as in \eqref{eq:pHEQ}--\eqref{eq:pHEQProperties}, the ratio $c_3/c_2$ may be replaced by the condition number of $E^\top Q$, cf.~the end of \cref{sec:pH}.

\section{Numerical Examples}
\label{sec:numerics}

In this section we demonstrate the structure-preserving \gls{mor} framework presented in \cref{sec:pHMOR} by means of two numerical test cases.
A linear advection--diffusion equation with non-periodic boundary conditions is considered in \cref{sec:ADE} and we demonstrate the \gls{ph} structure of the \gls{fom} as well as the energy consistency of the \gls{rom}.
In \cref{sec:wildlandFire} we consider a wildland fire model which is given by a coupled nonlinear system of a \gls{pde} and an \gls{ode}.
Assuming periodic boundary conditions, we demonstrate that the \gls{fom} may be written as a dissipative Hamiltonian system, i.e., a \gls{ph} system without external ports.
Moreover, we compare a \gls{rom} based on the structure-preserving technique from \cref{sec:separableMOR} with a \gls{rom} obtained via a non-structure-preserving approach.

The time integration of the \glspl{rom} is performed using the implicit midpoint rule and the nonlinear systems occurring in each time step are solved using the \textsc{Matlab} function \texttt{fsolve} with default settings.
Furthermore, all relative error values reported in the following correspond to the relative error in a discretized $L^2(\timeInterval\times\Omega)$ norm, where $\timeInterval$ denotes the time interval and $\Omega$ the spatial domain.
The discretization of the time integral is performed using the composite trapezoidal rule, whereas the spatial $L^2$ norm is approximated via $\norm{\cdot}_{\Eh}$.
Here, $\Eh$ denotes the leading matrix of the left-hand side of the \gls{fom}, cf.~\eqref{eq:ADEh}--\eqref{eq:ADEh_coefficentMatrices} and \eqref{eq:wildfireFOM}.

\paragraph*{Code Availability}

The \textsc{Matlab} source code for the numerical examples can be obtained from the doi \href{https://doi.org/10.5281/zenodo.7613302}{10.5281/zenodo.7613302}.

\subsection{Advection--Diffusion Equation}
\label{sec:ADE}

The first test case is given by a linear advection--diffusion equation on the spatial domain $\Omega = (0,1)$ with mixed Robin--Neumann boundary conditions.
The corresponding governing equations read
\begin{equation}
	\label{eq:ADE}
	\left\{\begin{aligned}
		\partial_t\sol(t,\spaceVar) &= -c \partial_\spaceVar\sol(t,\spaceVar)+\diffusion\partial_{\spaceVar\spaceVar} \sol \left(t,\spaceVar\right) && \text{for all }(t,\spaceVar)\in\timeInterval\times\Omega,\\
		c\sol(t,0)-\diffusion\partial_{\spaceVar}\sol(t,0) &= cg(t) && \text{for all }t\in\timeInterval,\\
		\partial_\spaceVar \sol(t,1) &= 0 && \text{for all }t\in\timeInterval,\\
		\sol(0,\spaceVar) &= \sol_0(\spaceVar) && \text{for all }\spaceVar\in\Omega
	\end{aligned}\right.
\end{equation}
with unknown $\sol\colon \timeInterval\times\overline{\Omega}\to\RR$, advection speed $c\in\RR_{>0}$, diffusion coefficient $d\in\RR_{>0}$, Robin boundary value $g\colon \RR_{\ge 0}\to\RR$, and initial value $\sol_0\colon \overline{\Omega}\to\RR$.
The combination of Robin and Neumann boundary conditions as used in \eqref{eq:ADE} is sometimes referred to as Danckwerts boundary conditions, cf.~\cite{AguBPC22,Dan53}.

In order to discretize the initial-boundary value problem \eqref{eq:ADE} in space, we use a finite element scheme.
To this end, we first consider the following weak formulation:
Find $\sol\colon \timeInterval\times\overline{\Omega}\rightarrow \RR$ such that
\begin{enumerate}[(i)]
	\item for all $t\in\timeInterval$, $\sol(t,\cdot)$ is in $H^1(\Omega)$ and satisfies
		\begin{align*}
			\ip{\psi,\partial_t\sol(t,\cdot)}_{\LTwo{\Omega}} 	&= c\psi(0)g(t)-\frac{c}2\left(\ip{\psi,\partial_\spaceVar\sol(t,\cdot)}_{\LTwo{\Omega}}-\ip{\psi^\prime,\sol(t,\cdot)}_{\LTwo{\Omega}}\right)\\
			&\quad-\diffusion\ip{\psi^\prime,\partial_\spaceVar\sol(t,\cdot)}_{\LTwo{\Omega}}-\frac{c}2\left(\psi(1)\sol(t,1)+\psi(0)\sol(t,0)\right)
		\end{align*} 
		for all $\psi\in H^1(\Omega)$,
	\item for all $\spaceVar\in \Omega$, we have $\sol(0,\spaceVar)=\sol_0(\spaceVar)$.
\end{enumerate}
Based on this weak formulation, we use a standard Galerkin finite element scheme based on an equidistant mesh with mesh size $h=\frac{1}{\nMesh+1}$, $N\in\NN$, and piecewise linear ansatz and test functions.
The resulting semi-discretized system takes the form
\begin{equation}
	\label{eq:ADEh}
		\Eh\dot\sol_h(t) 	= (\Jh-\Rh)\solh(t)+\Bh u(t)\quad \text{for all }t\in\timeInterval,
\end{equation}
where $\solh\colon\timeInterval\to\RR^{\nMesh+2}$ contains the coefficients corresponding to the finite element ansatz functions, the input $u\colon \RR_{\ge 0}\to\RR$ is given by $u=g$, and $\Eh,\Jh,\Rh\in\RR^{\nMesh+2,\nMesh+2}$, $\Bh\in\RR^{\nMesh+2}$ are defined as
\begin{subequations}
	\label{eq:ADEh_coefficentMatrices}
	\begin{align}
		\label{eq:ADEh_EhAndBh}
		\Eh &\vcentcolon= \frac{h}{6}
		\begin{bmatrix}
			2 & 1 & 0 & \cdots & 0 & 0\\
			1 & 4 & 1 & \ddots & \vdots & \vdots\\
			0 & 1 & 4 & \ddots & 0 & 0\\
			\vdots & \ddots & \ddots & \ddots & 1 & 0\\
			0 & \cdots & 0 & 1 & 4 & 1\\
			0 & \cdots & 0 & 0 & 1 & 2
		\end{bmatrix}
		,\quad \Bh \vcentcolon= c
		\begin{bmatrix}
			1\\
			0\\
			0\\
			\vdots\\
			0
		\end{bmatrix}
		,\\[0.1cm]
		\Jh &\vcentcolon= -\frac{c}{2}\tridiag_{\nMesh+2}(-1,0,1),\\ 
		\Rh &\vcentcolon= \frac{\diffusion}{h}
		\begin{bmatrix}
			1 & -1 & 0 & \cdots & 0 & 0\\
			-1 & 2 & -1 & \ddots & \vdots & \vdots\\
			0 & -1 & 2 & \ddots & 0 & 0\\
			\vdots & \ddots & \ddots & \ddots & -1 & 0\\
			0 & \cdots & 0 & -1 & 2 & -1\\
			0 & \cdots & 0 & 0 & -1 & 1
		\end{bmatrix}
		+\frac{c}{2}\diag(1,0,\ldots,0,1).
	\end{align}
\end{subequations}
Here, $\Eh$ is symmetric and positive definite, $\Jh$ is skew-symmetric, and $\Rh$ is symmetric and positive semi-definite.
Consequently, \eqref{eq:ADEh} represents the state equation of a \gls{ph} system of the form \eqref{eq:pHEQ} with Hamiltonian $\hamh(\solh)=\frac{1}{2}\solh^\top\Eh\solh$.

For the following numerical experiments, we choose the \gls{pde} parameters as $c=1$ and $\diffusion=10^{-3}$, the final time as $\tend=1.2$, and the boundary and initial values as
\begin{equation}
	\label{eq:ADE_ICBC}
	\begin{aligned}
		g(t) &= u(t) = 
		\begin{cases}
			\frac12\exp\left(1-\frac1{1-(20(t-0.225))^2}\right), & \text{if }t\in (0.175,0.275),\\
			0, 																		& \text{otherwise},
		\end{cases}
		\\
		\sol_0(\spaceVar) &= 
		\begin{cases}
			\exp\left(1-\frac1{1-(20(\spaceVar-\frac12))^2}\right), & \text{if }\spaceVar\in (0.45,0.55),\\
			0, 																						& \text{otherwise}
		\end{cases}
	\end{aligned}
\end{equation}
for all $t\in\RR_{\ge 0}$ and $\xi\in\overline{\Omega}$, respectively.
Moreover, we divide the spatial domain into $\nMesh+1=1000$ equidistant intervals, which corresponds to a mesh size of $h=10^{-3}$. 
For the time discretization, we use the implicit midpoint rule with step size $10^{-3}$.
\Cref{fig:ADE_FOM} depicts the numerical solution by means of a pseudocolor plot.
We observe that the initial wave profile is transported to the right, while its shape and amplitude change due to the diffusion.
After a certain time, a second wave enters the computational domain via the left boundary and is also transported to the right.

\begin{figure}[h!]
	\begin{center}
		\includegraphics[width=10cm]{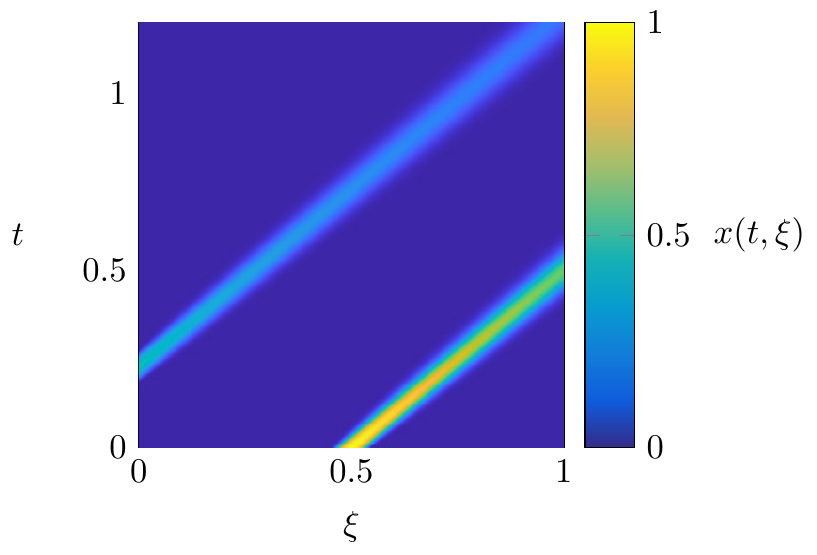}
	\end{center}
	\caption{Linear advection--diffusion equation: pseudocolor plot of the \gls{fom} solution.}
	\label{fig:ADE_FOM}
\end{figure}

In the following, we proceed similarly as in \cite{BlaSU20} and approximate the \gls{fom} state by a linear combination of transformed modes using an extended domain shift operator as transformation operator, cf.~\cite[sec.~7.2]{BlaSU20} for the details.
On the space-discrete level, the shift operation requires an interpolation scheme for obtaining values of the underlying continuous function in between the spatial grid points.
To this end, we employ cubic spline interpolation.
The resulting approximation ansatz takes the form
\begin{equation}
	\label{eq:ADE_approximationAnsatz}
	\solh(t) \approx \sum_{i=1}^{\rDim-1} \amplitude_i(t)\Text(\shift(t))\modes_i\quad \text{for all }t\in\timeInterval,
\end{equation}
where $\Text\colon \RR\to\RR^{\fulldim,\modeDim}$ with $\fulldim\vcentcolon=\nMesh+2$ is the discretized analogue of the extended domain shift operator, $\shift\colon\timeInterval\to\RR$ corresponds to the shift amount, $\modes_1,\ldots,\modes_{\rDim-1}\in\RR^{\modeDim}$ are the modes, $\amplitude_1,\ldots,\amplitude_{\rDim-1}\colon\timeInterval\to\RR$ the corresponding amplitudes, and $\modeDim$ the number of spatial grid points of the extended domain.
We note that \eqref{eq:ADE_approximationAnsatz} may be written as a separable ansatz of the form \eqref{eq:separableMORAnsatz} by defining $\Vs\colon \RR\to\RR^{\fulldim,\rDim-1}$ via
\begin{equation*}
	\Vs(\shift) \vcentcolon= \Text(\shift)
	\begin{bmatrix}
		\modes_1 & \cdots & \modes_{\rDim-1}
	\end{bmatrix}
	.
\end{equation*}
Based on the snapshot data depicted in \Cref{fig:ADE_FOM}, we determine $\rDim-1=3$ modes via the residual minimization approach presented in \cite{BlaSU20,SchRM19}.
The resulting relative offline error is $0.71\%$.
Afterwards, we use the structure-preserving projection framework detailed in \cref{sec:separableMOR} to obtain a corresponding port-Hamiltonian \gls{rom} of the form \eqref{eq:nonlinearPHDAE_Q_factorizableMOR} with coefficients as in \eqref{eq:pHROM_LTVFOM_separableMOR_coefficients}.
The resulting online error is $1.2\%$.
To demonstrate the energy consistency of the \gls{rom}, \Cref{fig:ADE_powerBalance} depicts the (discretized) time derivative of the \gls{rom} Hamiltonian $\redHam$ as well as the corresponding dissipation and supply rate at the midpoints of the discrete time intervals, cf.~\cref{sec:pH}.
Especially, we observe that the power balance \eqref{eq:nonlinearPHDAE_dissipationIneq} is approximately satisfied, as the graphs corresponding to $\frac{\diff\redHam}{\diff t}$ and $\redSupplyRate-\redDiss$ lie on top of each other.
The fact that the power balance is only approximately satisfied is illustrated in \Cref{tab:ADE_powerBalance}, where the corresponding mean and maximum errors are summarized for three different values of the time step size.
Especially, the results indicate that the error is mainly due to the time discretization, as the errors decrease with decreasing time step size.
We note that the implicit midpoint rule would yield a time-discrete system where the power balance is satisfied without any error, if the \gls{rom} Hamiltonian were a quadratic function of the \gls{rom} state, cf.~\cite{KotL19,MehM19}.
However, due to the nonlinear approximation ansatz this is not the case here, cf.~\eqref{eq:LTVPH_separableMOR_redHam}.

\begin{figure}[h!]
	\begin{center}
		\includegraphics[width=12cm]{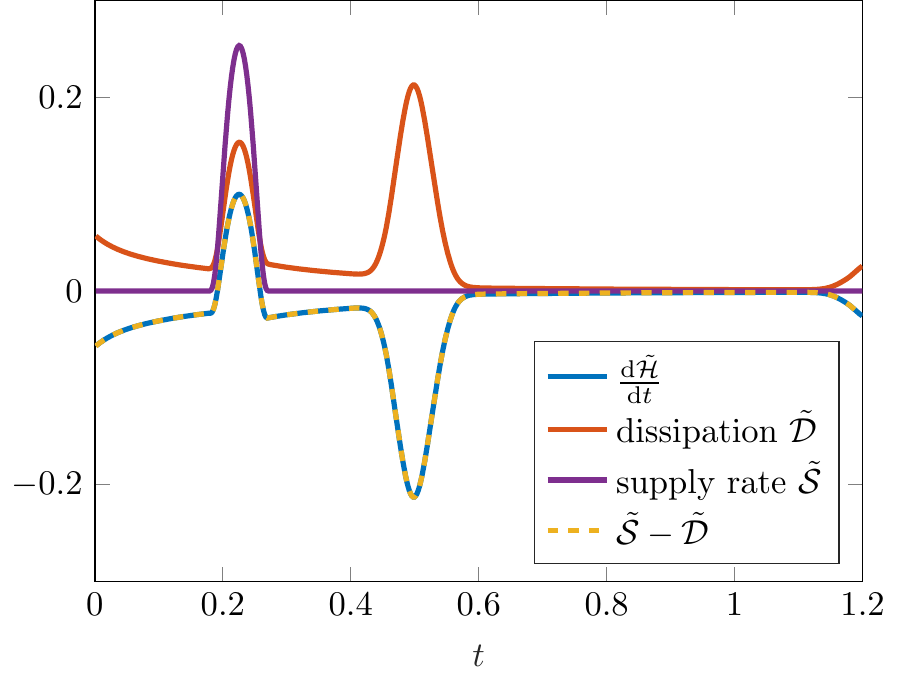}
	\end{center}
	\caption{Linear advection--diffusion equation: comparison of the temporal change of the \gls{rom} Hamiltonian $\redHam$ and the corresponding dissipation $\redDiss$ and supply rate $\redSupplyRate$.}
	\label{fig:ADE_powerBalance}
\end{figure}

\begin{table}[h!]
	\centering
	\caption{Linear advection--diffusion equation: Comparison of the maximum and mean errors in the \gls{rom} power balance for different time step sizes.}
	\label{tab:ADE_powerBalance}
	\begin{tabular}{lcc}
		\toprule
		time step size & maximum error & mean error\\\midrule
		$10^{-3}$ & $1.7\cdot 10^{-5}$ & $1.4\cdot 10^{-6}$ \\
		$5\cdot 10^{-4}$ & $3.9\cdot 10^{-6}$ & $4.1\cdot 10^{-7}$\\
		$2\cdot 10^{-4}$ & $6.9\cdot 10^{-7}$ & $6.3\cdot 10^{-8}$\\\bottomrule
	\end{tabular}
\end{table}

\subsection{Wildland Fire Model}
\label{sec:wildlandFire}

As second example, we consider a model which describes the dynamics of a wildland fire, cf.~\cite{BlaSU21b,ManBBCDKV08}.
The governing equations on a one-dimensional spatial domain $\Omega=(a,b)$ are given by
\begin{equation}
	\label{eq:wildfirePDE}
	\begin{aligned}
		\partial_t T &= k\partial_{\spaceVar\spaceVar}T-w\partial_\spaceVar T+\alpha (S\reactionRate(T,\arrhenius)-\gamma T),\\
		\partial_t S &= -\preExpFac S\reactionRate(T,\arrhenius),
	\end{aligned}
\end{equation}
where the unknowns are the relative temperature $T\colon \timeInterval\times \overline{\Omega}\to \RR$ and the supply mass fraction $S \colon \timeInterval\times \overline{\Omega}\to \RR$.
Furthermore, the constants $k,\alpha,\arrhenius,\gamma,\preExpFac\in\RR_{>0}$ and $w\in\RR$ are assumed to be given and $\reactionRate\colon \RR\times\RR\to \RR$ is defined via
\begin{equation*}
	\reactionRate(T,\arrhenius) \vcentcolon=
	\begin{cases}
		\exp\left(-\frac\arrhenius{T}\right), 	& \text{if }T>0,\\
		0, 													& \text{otherwise}.
	\end{cases}
\end{equation*}
For the physical meaning of these coefficients, we refer to \cite{BlaSU21b,ManBBCDKV08}.
Moreover, the system \eqref{eq:wildfirePDE} is closed via appropriate initial conditions and periodic boundary conditions, cf.~\cite{BlaSU21b}.

In contrast to the previous section, we follow \cite{BlaSU21b} for the spatial semi-discretization of \eqref{eq:wildfirePDE} and perform a central finite difference scheme based on an equidistant grid with grid size $h=\frac{b-a}{\nMesh+1}$, $\nMesh\in\NN$.
The resulting finite-dimensional system reads
\begin{equation}
	\label{eq:wildfireFOM}
	\begin{bmatrix}
		\dot{\sol}_1(t)\\
		\dot{\sol}_2(t)
	\end{bmatrix}
	= 
	\begin{bmatrix}
		kD_2-wD_1-\alpha\gamma I_{\nMesh+1} 	& \alpha\reactionRateMat(\sol_1(t),\arrhenius)\\
		0																& -\preExpFac\reactionRateMat(\sol_1(t),\arrhenius)
	\end{bmatrix}
	\begin{bmatrix}
		\sol_1(t)\\
		\sol_2(t)
	\end{bmatrix}
	,
\end{equation}
where $\sol_1,\sol_2\colon \timeInterval\to \RR^{\nMesh+1}$ correspond to approximations of $T$ and $S$ at the spatial grid points $ih$ for $i=1,\ldots,\nMesh+1$.
Moreover, $D_1 = -D_1^\top$ and $D_2=D_2^\top \le 0$ are finite difference approximations of the first and second spatial derivative, respectively, and the function $\reactionRateMat\colon \RR^{\nMesh+1}\times\RR\to \RR^{\nMesh+1, \nMesh+1}$ is given by
\begin{equation*}
	\reactionRateMat(\sol_1,\arrhenius) \vcentcolon= \diag\left(\reactionRate([\sol_1]_1,\arrhenius),\ldots,\reactionRate([\sol_1]_{\nMesh+1},\arrhenius)\right).
\end{equation*}
In the following, we demonstrate that the semi-discretized wildland fire model \eqref{eq:wildfireFOM} may be formulated as a dissipative Hamiltonian system.
To this end, we introduce $\eta \vcentcolon= \frac\alpha{4\gamma\preExpFac}$ and observe that \eqref{eq:wildfireFOM} may be written as
\begin{equation}
	\label{eq:wildfirePH}
	\begin{aligned}
		\begin{bmatrix}
			\dot{\sol}_1(t)\\
			\dot{\sol}_2(t)
		\end{bmatrix}
		= &\Bigg( \underbrace{
		\begin{bmatrix}
			-wD_1 	& 0\\
			0			& 0
		\end{bmatrix}
		}_{=\vcentcolon J_1}+\underbrace{\frac\alpha{2\eta}
		\begin{bmatrix}
			0 															& \reactionRateMat(\sol_1(t),\arrhenius)\\
			-\reactionRateMat(\sol_1(t),\arrhenius) 	& 0
		\end{bmatrix}
		}_{=\vcentcolon J_2(\sol_1(t))}\Bigg)
		\underbrace{
		\begin{bmatrix}
			I_{\nMesh+1} 	& 0\\
			0 						& \eta I_{\nMesh+1}
		\end{bmatrix}
		}_{=\vcentcolon Q}
		\begin{bmatrix}
			\sol_1(t)\\
			\sol_2(t)
		\end{bmatrix}
		\\
		&\quad-\Bigg(\underbrace{k
		\begin{bmatrix}
			-D_2 & 0\\
			0		& 0
		\end{bmatrix}
		}_{=\vcentcolon R_1}+\underbrace{
		\begin{bmatrix}
			\alpha\gamma I_{\nMesh+1} 									& -\frac\alpha{2\eta}\reactionRateMat(\sol_1(t),\arrhenius)\\
			-\frac\alpha{2\eta}\reactionRateMat(\sol_1(t),\arrhenius) 	& \frac\preExpFac\eta\reactionRateMat(\sol_1(t),\arrhenius)
		\end{bmatrix}
		}_{=\vcentcolon R_2(\sol_1(t))}\Bigg)
		\underbrace{
		\begin{bmatrix}
			I_{\nMesh+1} 	& 0\\
			0 						& \eta I_{\nMesh+1}
		\end{bmatrix}
		}_{=Q}
		\begin{bmatrix}
			\sol_1(t)\\
			\sol_2(t)
		\end{bmatrix}
		.
	\end{aligned}
\end{equation}
Here, we note that $Q$ is symmetric and positive definite since $\eta$ is positive and that $J_2$ and $R_2$ are pointwise skew-symmetric and symmetric, respectively.
Furthermore, since $D_1$ is skew-symmetric and $D_2$ is symmetric and negative semi-definite, we infer that $J_1$ is skew-symmetric and that $R_1$ is symmetric and positive semi-definite.
Thus, if we can show that additionally $R_2$ is pointwise positive semi-definite, we may conclude that \eqref{eq:wildfirePH} is a dissipative Hamiltonian system.
For this purpose, let $z = [p^\top\;\; q^\top]^\top\in\RR^{2(\nMesh+1)}$ with $p,q\in\RR^{\nMesh+1}$ and $u\in\RR^{\nMesh+1}$ be arbitrary.
Then, we obtain
\begin{align*}
	z^\top R_2(u)z 	&= \alpha\gamma p^\top p +\frac1\eta\sum_{i=1}^{\nMesh+1}\left(\preExpFac \reactionRate(u_i,\arrhenius)q_i^2-\alpha\reactionRate(u_i,\arrhenius)p_iq_i\right)\\
							&= \gamma\sum_{i=1}^{\nMesh+1}\Big(\underbrace{\alpha p_i^2-4\zeta\reactionRate(u_i,\arrhenius)p_iq_i+\frac{4\zeta^2}\alpha \reactionRate(u_i,\arrhenius)q_i^2}_{=\vcentcolon s_i}\Big).
\end{align*}
In the case where $u_i\le 0$ holds for some $i\in\lbrace 1,\ldots, \nMesh+1\rbrace$, we have $\reactionRate(u_i,\arrhenius)=0$ and, hence, $s_i\ge 0$.
Otherwise, we obtain
\begin{align*}
	s_i 	&= \alpha p_i^2-4\zeta\exp\left(-\frac\arrhenius{u_i}\right)p_iq_i+\frac{4\zeta^2}\alpha \exp\left(-\frac\arrhenius{u_i}\right)q_i^2\\
			&\ge \alpha p_i^2-4\zeta\exp\left(-\frac\arrhenius{u_i}\right)p_iq_i+\frac{4\zeta^2}\alpha\exp\left(-\frac{2\arrhenius}{u_i}\right)q_i^2\\
			&= \alpha p_i^2-4\zeta\exp\left(-\frac\arrhenius{u_i}\right)p_iq_i+\frac{4\zeta^2}\alpha \left(\exp\left(-\frac\arrhenius{u_i}\right)\right)^2q_i^2\\
			&= \left(\sqrt\alpha p_i-\frac{2\zeta}{\sqrt\alpha}\exp\left(-\frac\arrhenius{u_i}\right)q_i\right)^2 \ge 0.
\end{align*}
Consequently, $R_2$ is pointwise symmetric and positive semi-definite and, hence, \eqref{eq:wildfirePH} is a dissipative Hamiltonian formulation of \eqref{eq:wildfireFOM} with $J \vcentcolon= J_1+J_2$ and $R\vcentcolon= R_1+R_2$.

For the following numerical experiments, we choose the physical and discretization parameters as detailed in \cite[sec.~5.4]{BlaSU21b}.
The resulting snapshots are depicted in \Cref{fig:wildlandFire_FOM}.
Especially, we observe two traveling waves propagating through the computational domain.

\begin{figure}[h!]
	\begin{center}
		\includegraphics[width=6.3in]{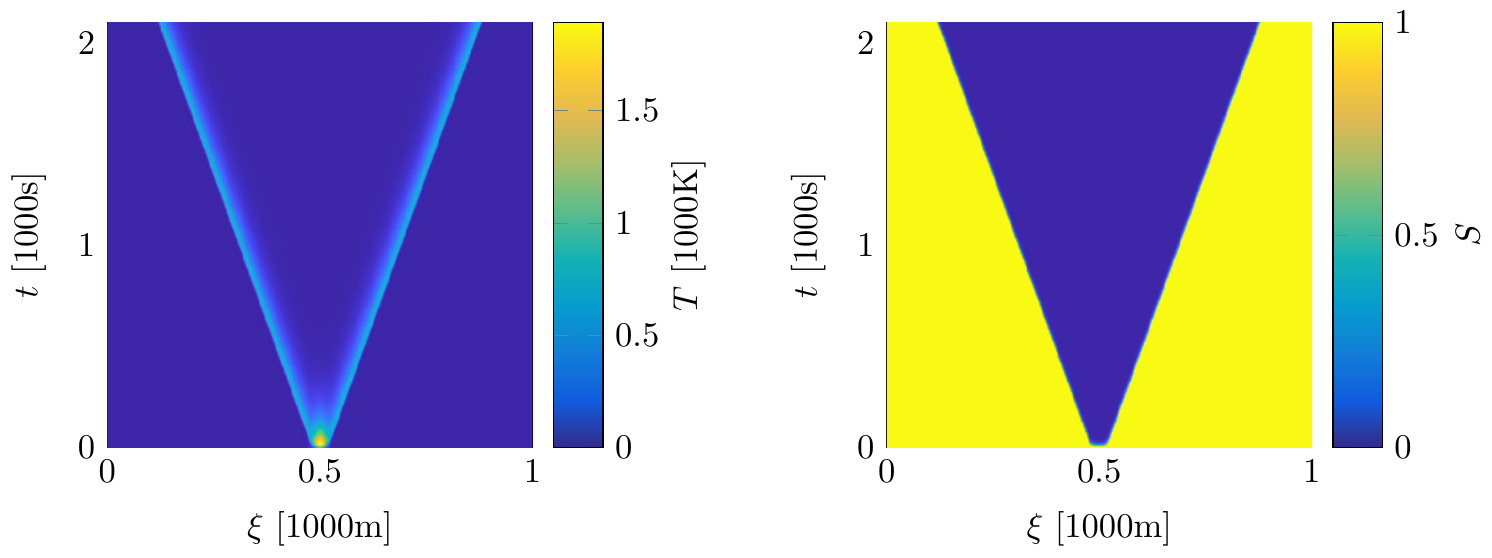}
	\end{center}
	\caption{Wildland fire model: pseudocolor plot of the temperature (left) and supply mass fraction (right).}
	\label{fig:wildlandFire_FOM}
\end{figure}

For the model reduction, we follow a slightly different approach than in \cite{BlaSU21b} since our main focus is on demonstrating the structure preservation rather than on the accuracy and evaluation time of the \gls{rom}.
Similar to \cref{sec:ADE}, we approximate the \gls{fom} state by a linear combination of two transformed modes, one for each traveling wave.
However, instead of an extended domain shift operator we use a periodic shift operator, which is again discretized using cubic spline interpolation.
The corresponding approximation ansatz reads
\begin{equation}
	\label{eq:wildlandFire_approximationAnsatz}
	\begin{bmatrix}
		\sol_1(t)\\
		\sol_2(t)
	\end{bmatrix}
	\approx
	\sum_{i=1}^2\amplitude_{i}(t)
	\begin{bmatrix}
		\Tper(\shift_i(t)) 	& 0\\
		0					& \Tper(\shift_i(t))
	\end{bmatrix}		
	\begin{bmatrix}
		\modes_{i,\mathrm{T}}\\
		\modes_{i,\mathrm{S}}
	\end{bmatrix}
	,
\end{equation}
where $\Tper\colon \RR\to\RR^{\fulldim,\fulldim}$ with $\fulldim\vcentcolon=\nMesh+1$ is the discretized analogue of the periodic shift operator and $\modes_{i,\mathrm{T}}$ and $\modes_{i,\mathrm{S}}$ denote the temperature and supply mass fraction block component of the $i$th mode, respectively, for $i=1,2$.
Similarly as in \cref{sec:ADE}, \eqref{eq:wildlandFire_approximationAnsatz} may be written as a separable ansatz of the form \eqref{eq:separableMORAnsatz}.

As in the previous subsection, the modes are determined via residual minimization yielding a relative offline error of $13\%$.
As demonstrated in \cite{BlaSU21b}, the error may be significantly reduced by increasing the mode numbers and introducing a suitable time interval splitting.
However, for illustrating the structure preservation, the relatively coarse approximation based on two shifted modes is sufficient here.
In the following, we compare two different ROMs which are both integrated in time using the implicit midpoint rule.
The first one is based on the nonlinear Galerkin approach from \cite[sec.~5]{BlaSU20} and enforces the residual at $t\in\timeInterval$ to be orthogonal to the column space of $[\Vs(\shift(t))\; \,\widehat\Vs(\shift(t))\amplitude(t)]$.
On the other hand, the second \gls{rom} is based on the structure-preserving projection framework outlined in \cref{sec:separableMOR}, i.e., it enforces the residual at $t\in\timeInterval$ to be orthogonal to the column space of $Q[\Vs(\shift(t))\; \,\widehat\Vs(\shift(t))\amplitude(t)]$ with $Q$ as in \eqref{eq:wildfirePH}.
The corresponding online approximations of the temperature field are compared in \Cref{fig:wildlandFire_ROM_pcolorPlots}.
The non-structure-preserving \gls{rom} based on the Galerkin approach yields a solution where the temperature rapidly decreases to zero and the fire goes out before traveling combustion waves may develop.
On the other-hand, the approximation obtained by the structure-preserving \gls{rom} reveals traveling combustion waves, although the flame speeds are significantly smaller than the ones obtained by the corresponding FOM.
Furthermore, \Cref{fig:wildfire_ROMPowerBalances} illustrates that the unsatisfactory solution obtained by the non-structure-preserving \gls{rom} is accompanied by an energy inconsistency.
Especially, at the beginning of the time interval, the decline of the Hamiltonian is much greater than the corresponding dissipation, i.e., the power balance is clearly violated.
Since the ROM Hamiltonian is a squared function of the amplitudes $\alpha$, cf.~\eqref{eq:LTVPH_separableMOR_redHam}, this rapid decline is also reflected in the abrupt temperature decrease observed in \Cref{fig:wildlandFire_ROM_pcolorPlots}.
On the other hand, the structure-preserving MOR approach yields an energy-consistent ROM as illustrated in \Cref{fig:wildfire_ROMPowerBalances}, right.

\begin{figure}[h!]
	\begin{center}
		\includegraphics[width=6.3in]{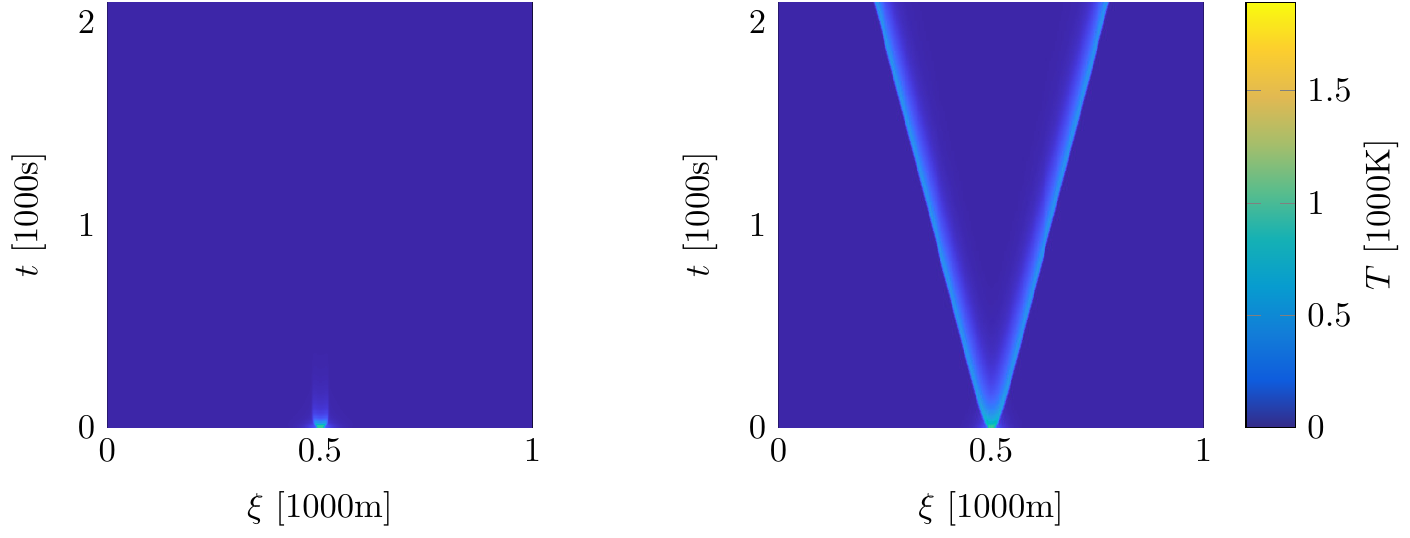}
	\end{center}
	\caption{Wildland fire model: pseudocolor plots of the temperature approximations using the non-structure-preserving ROM (left) and the structure-preserving one (right).}
	\label{fig:wildlandFire_ROM_pcolorPlots}
\end{figure}

\begin{figure}[h!]
	\begin{center}
		\includegraphics[width=6.3in]{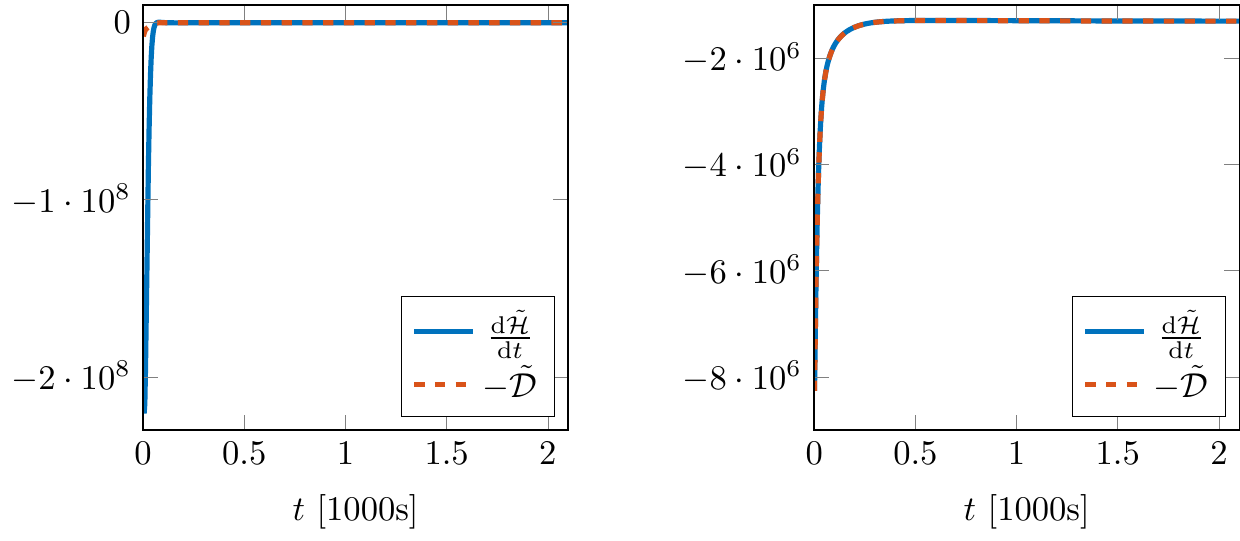}
	\end{center}
	\caption{Wildland fire model: comparison of the temporal change of the \gls{rom} Hamiltonian $\redHam$ and the corresponding negative dissipation $-\redDiss$ using the non-structure-preserving ROM (left) and the structure-preserving one (right).}
	\label{fig:wildfire_ROMPowerBalances}
\end{figure}

The results addressed in the previous paragraph demonstrate that the structure-preserving \gls{mor} approach does not only ensure port-Hamiltonian \glspl{rom}, but it may sometimes also lead to a gain in accuracy.
However, at this point we emphasize that there is in general no guarantee that this is the case and in most of our numerical experiments the accuracies of the structure-preserving and the non-structure-preserving ROMs have been comparable.
The theory from \cref{sec:separableMOR} only ensures the energy consistency of the \gls{rom}, but it does not include any statements about the accuracy in comparison to the \gls{fom}.

\section{Conclusion}
\label{sec:conclusion}

In this paper, we introduce a structure-preserving model order reduction (MOR) framework for port-Hamiltonian (pH) systems based on a special class of nonlinear approximation ansatzes.
In particular, we consider ansatzes where the approximation of the full-order state may be factorized as the product of a matrix, which may depend on the reduced-order model (ROM) state, and the \gls{rom} state itself.
Such ansatzes are for instance relevant in the context of transport-dominated systems which are challenging for classical methods based on linear subspace approximations.
Based on the considered class of ansatzes, we demonstrate how to obtain a port-Hamiltonian \gls{rom} via projection, provided that the corresponding full-order model (FOM) has a special \gls{ph} structure, which includes linear as well as a wide range of nonlinear \gls{ph} systems.
Moreover, for the special case that the \gls{fom} is linear, we identify a subclass of nonlinear ansatzes yielding \glspl{rom} which are not only \gls{ph}, but also optimal in the sense that the derivative of the \gls{rom} state minimizes a certain weighted norm of the residual.
In addition, we provide sufficient conditions for the \gls{rom} state equation with vanishing input signal to be stable.
Finally, the theoretical findings are illustrated by means of a linear advection--diffusion problem with non-periodic boundary conditions and a nonlinear reaction--diffusion system modeling the spread of wildland fires.

While we have considered a special class of nonlinear approximation ansatzes, an interesting future research direction is to derive structure-preserving \gls{mor} schemes based on more general nonlinear ansatzes.
This would especially allow to obtain port-Hamiltonian \glspl{rom} via projection onto nonlinear manifolds which are parametrized by artificial neural networks.
Another question not addressed in this manuscript is the preservation of algebraic constraints in cases where the \gls{fom} is given by a \gls{ph} system of differential--algebraic equations.
While corresponding approaches already exist in the context of classical \gls{mor} based on linear ansatzes as mentioned in \Cref{rem:algebraicConstraints}, this is still an open problem in the context of nonlinear ansatzes.
 
\paragraph{Acknowledgments}

This work was supported by the Deutsche Forschungsgemeinschaft (DFG, German Research Foundation) Collaborative Research Center Transregio 96 \emph{Thermo-energetic design of machine tools -- A systemic approach to solve the conflict between power efficiency, accuracy and productivity demonstrated at the example of machining production}, project number 174223256.
In addition, I would like to thank Volker Mehrmann and Riccardo Morandin from TU Berlin for helpful discussions.

\bibliographystyle{plain}
\bibliography{Refs}

\end{document}